\newcommand{\mycomment}{}
\newcommand{\beq}[1]{ \begin{equation}\label{#1} }
\newcommand{\eeq}{\end{equation}}
\numberwithin{equation}{section}
\DeclareMathOperator*{\esssup}{ess\,sup}
\DeclareMathOperator*{\essinf}{ess\,inf}
\def\RR{{\mathbb R}}
\begin{document}  

\newtheorem{guess}{Theorem}
\newtheorem{uess}{Lemma}

\title{On Stability of Delay Equations with\\
Positive and Negative Coefficients \\
with Applications}

\runtitle{Stability of Equations with Variable Delays}

\author{Leonid Berezansky and Elena Braverman}
\runauthor{L.~Berezansky and E.~Braverman}

\address{L.~Berezansky: Department of Mathematics,
Ben-Gurion University of the Negev, 
Beer-Sheva 84105, Israel}
\address{E.~Braverman: Department of Mathematics and Statistics, 
University of Calgary, 2500 University Dr. NW, Calgary, AB T2N 1N4, Canada.
\email{maelena@ucalgary.ca}}

\abstract{We obtain new explicit exponential stability conditions
for linear scalar equations with positive and negative delayed terms
$$
\dot{x}(t)+ \sum_{k=1}^m a_k(t)x(h_k(t))- \sum_{k=1}^l b_k(t)x(g_k(t))=0
$$
and its modifications, 
and apply them to investigate  local stability of Mackey--Glass type models
$$\dot{x}(t)=r(t)\left[\beta\frac{x(g(t))}{1+x^n(g(t))}-\gamma x(h(t))\right]$$
and
$$\dot{x}(t)=r(t)\left[\beta\frac{x(g(t))}{1+x^n(h(t))}-\gamma x(t)\right].$$
}

\keywords{Variable and distributed delays,  positive and negative coefficients, exponential stability, 
Mackey--Glass equation, solution estimates, local stability.}

\primclass{34K20} 
\secclasses{34K06, 92D25}


\received{November 24, 2017}
\revised{August 31, 2018}
\logo{XX}{20XX}{X}{1}
\doi{}

\maketitle

\section{Introduction}

The Mackey--Glass equation with two delays and non-monotone feedback
\begin{equation}\label{51}
\dot{x}(t)=r(t)\left[\beta\frac{x(g(t))}{1+x^n(g(t))}-\gamma x(h(t))\right]
\end{equation}
generalizes the classical blood production model \cite{Mak1}
\begin{equation}\label{intro1}
\dot{x}(t)= \beta\frac{x(t-\tau)}{1+x^n(t-\tau)}-\gamma x(t).
\end{equation}
Similarly to (\ref{intro1}), if $\gamma<\beta$
then equation (\ref{51}) has a positive equilibrium 
$x^{\ast}=\big(\frac{\beta}{\gamma}-1\big)^{\frac{1}{n}}$.
The linearization of (\ref{51}) around $x^{\ast}$ has the form 

\begin{equation}\label{52}
\dot{x}(t)+r(t)\left[ \gamma  x(h(t))-\gamma \left( 1-n+\frac{\gamma n}{\beta} \right)x(g(t)) \right] =0.
\end{equation}

Another equation  generalizing (\ref{intro1}), with two delays involved in the production function
\begin{equation}
\label{MG}
\dot{x}(t) = s(t) \left[ \frac{\beta x(p(t))}{1+x^{n}(q(t))} - x(t) \right], \quad t \geq t_0,
\end{equation}
was recently considered in \cite{JMAA2017}. It has a positive equilibrium $x^{\ast}=(\beta-1)^{\frac{1}{n}}$ 
for $\beta>1$, and its linearization about 
$x^{\ast}$ has the form 
\begin{equation}\label{lin}
\dot{x}(t)+s(t)[x(t)+\alpha x(q(t))-x(p(t))]=0,
\end{equation}
where
\begin{equation*}\label{alpha}
\alpha=\frac{n(\beta-1)}{\beta}.
\end{equation*}

This motivates us to investigate stability of linear equations with several positive and negative terms.

To this end, we obtain new explicit exponential stability conditions
for the scalar delay differential equation with positive and negative coefficients
\begin{equation}\label{1}
\dot{x}(t) \!+ \!a(t)x(h(t)) \!- \!b(t)x(g(t)) \!= \!0, \quad a(t) \!\geq \! b(t) \!\geq \! 0, \quad h(t) \!\leq \! t, \quad g(t) \!\leq \! t,
\end{equation}
and for some generalizations of this equation, including equations with several delays,
integro-differential equations and equations with distributed delays.

Most stability results for linear delay differential equations were obtained for equations
with positive coefficients, see, for example, \cite{Kz,SYC,GH1,GH2}. 
There are only few results for equations of type (\ref{1}).
The paper \cite{BB1} involves a review of stability tests for equation (\ref{1}). Most of these results
are obtained for the equation
\begin{equation}\label{2}
\dot{x}(t)+a(t)x(h(t))-b(t)x(t)=0
\end{equation}
and usually have a complicated form.
In \cite{BB2}, various stability results 
\cite{YS,GD,W,SY,WL,ZW,ZY} for equation (\ref{2}) 
are compared  
using the test equation
\begin{equation}\label{3}
\dot{x}(t)+\sin^2 t \, [0.6 x(t-2)-bx(t)]=0, 
\end{equation}
where $b<0.6$ is a positive constant. This equation was considered first in the paper \cite{YS} for $b=\frac{1}{15}$,
and it was shown that the equation is asymptotically stable.   The best known 
condition $b<0.26$ for exponential stability of equation (\ref{3}) was obtained in \cite{BB2}.
In the present paper we improve all known stability tests for equation (\ref{3}), getting the estimate  $b<0.34$ for exponential stability.

Equation  (\ref{3}) is a special case of the equation with positive and negative coefficients and a non-delay term
\begin{equation}\label{4}
\dot{x}(t)+r(t)[ax(h(t))-bx(t)]=0,
\end{equation}
which is also a particular case of the equation with two delays
\begin{equation}\label{5}
\dot{x}(t)+r(t)[ax(h(t))-bx(g(t))]=0,
\end{equation}
where $a>b>0$ are constant, $r(t)\geq 0$, while $h$ and $g$ are delayed arguments.

Equations (\ref{4}) and (\ref{5})  appear as linearizations for many mathematical models
including Mackey--Glass equation (\ref{51}) and its modifications. 
In the present paper we obtain explicit exponential stability conditions for (\ref{4}) and (\ref{5}), which are easy to 
verify.
In particular, under some natural additional conditions, equation (\ref{4}) is uniformly exponentially stable if
\begin{equation}\label{6}
(a+b)\limsup_{t\rightarrow\infty}\int_{h(t)}^t r(s)ds <1+\frac{1}{e} \, .
\end{equation}
Applying stability condition (\ref{6}) to equation  (\ref{3}), 
we obtain the estimate  $b<0.34$, as was mentioned above. 

We apply our results to Mackey--Glass models with two delays (\ref{51}) and (\ref{MG})
deducing explicit local exponential stability (LES) conditions for 
the positive equilibrium and illustrate these results with numerical simulations. 

To obtain stability results for linearized equations, we apply the following tools: 
\begin{itemize}
\item
Bohl--Perron theorem which reduces the exponential stability problem to the norm estimation 
for linear operators in some functional spaces on semi-axes; 
\item
various transformations of a given equation including a transformation
of the independent variable $t$; 
\item
properties of equations with a positive fundamental function;
\item
a priori estimates of solutions and their derivatives. 
\end{itemize}

The paper is organized as follows. 
Section 2 contains a review of auxiliary results which are instrumental in the future proofs. 
In Section 3 the main stability results of the paper are obtained.
Section 4 involves an extension of these results to some more general models, 
such as equations with several positive and negative delayed terms, 
integro-differential equations and equations with a distributed delay. 
In Section 5 we apply the results obtained to Mackey--Glass type equations.
Section 6 presents a brief discussion of the results, as well as suggests some projects for future research.

\section{Preliminaries}

We consider equation  (\ref{1}) under the following conditions:
\begin{itemize}
\item[(a1)] $a$ and $b$ are Lebesgue measurable essentially bounded functions on $[0,\infty)$, $a(t)\geq b(t)\geq 0$;
\item[(a2)]  the functions $h$ and $g$ are Lebesgue measurable  on $[0,\infty)$, and $0\leq$ \lb
$ t-h(t)\leq \tau$, $0\leq t-g(t)\leq \delta$ 
for some finite constants $\tau$ and $\delta$.
\end{itemize}
Together with equation (\ref{1}), we consider for any $t_0\geq 0$ the initial value problem
\begin{equation}\label{7}
\dot{x}(t)+a(t)x(h(t))-b(t)x(g(t))=f(t), \ t\geq t_0; \quad x(t)=\varphi(t), \ t\leq t_0,
\end{equation}
where 
\begin{itemize}
\item[(a3)] the right-hand side $f$ is a Lebesgue measurable essentially bounded function on $[t_0,\infty)$, 
the initial function $\varphi:[t_0-\max\{\delta,\tau\}, t_0] \rightarrow \RR$ is a Borel measurable  
and bounded function.
\end{itemize}

\begin{definition}
The {\it solution} of problem (\ref{7}) is a locally  absolutely continuous on $[t_0,\infty)$ 
function satisfying the equation almost everywhere (a.e.) for $t\geq t_0$ 
and the initial conditions for $t\in [t_0-\max\{\delta,\tau\}, t_0]$.
The {\it fundamental function} $X(t,s)$ is a solution of the problem
\begin{equation*}
\dot{x}(t)+a(t)x(h(t))-b(t)x(g(t))=0, \ t\geq s; \quad x(t)=0, \ t<s, \quad x(s)=1.
\end{equation*}
\end{definition}

The following  result incorporates the solution representation.

\begin{lemma}[{\cite[Theorem 4.3.1]{AS}}]\label{lemma1}
The solution of problem {\rm(\ref{7})} exists, is unique and has the form 
\begin{equation*}
x(t)=X(t,t_0)\varphi(t_0)+\int_{t_0}^t X(t,s) f(s)ds -\int_{t_0}^t X(t,s) [a(s)\varphi(h(s))-b(s)\varphi(g(s))]ds,
\end{equation*}
where we assume $\varphi(t)=0$ for $t>t_0$.
\end{lemma}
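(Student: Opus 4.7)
Since the statement is cited from \cite[Theorem~4.3.1]{AS}, the plan is to outline the standard functional-differential argument rather than reconstruct the whole Azbelev--Simonov framework. The proof splits naturally into three steps: existence and uniqueness, reduction to zero prehistory, and a Duhamel-type superposition via the fundamental function.

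\emph{Step 1: Existence and uniqueness.} Rewriting (\ref{7}) as $\dot x(t) = -a(t)x(h(t)) + b(t)x(g(t)) + f(t)$ and integrating from $t_0$ produces the Volterra equation
\[
x(t) = \varphi(t_0) + \int_{t_0}^t \bigl[ f(s) - a(s)x(h(s)) + b(s)x(g(s)) \bigr]\, ds.
\]
Because $h(s),g(s) \in [s-\max\{\tau,\delta\},\, s]$ and $a,b \in L^\infty$, a short-interval contraction on $L^\infty([t_0, t_0+\eta])$ with $\eta\,(\esssup a + \esssup b) < 1$ yields a unique fixed point. The method of steps then extends the solution to all of $[t_0,\infty)$ while preserving local absolute continuity. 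The same argument, applied with $f \equiv 0$, $\varphi \equiv 0$, and the initial condition $x(s) = 1$, produces the fundamental function $X(\cdot,s)$.

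\emph{Step 2: Reduction to zero prehistory.} Extend $\varphi$ by zero for $t > t_0$ (the stated convention) and define $\bar x(t) := x(t)$ for $t \geq t_0$, $\bar x(t) := 0$ for $t < t_0$. Then for almost every $s \geq t_0$ one has $x(h(s)) = \bar x(h(s)) + \varphi(h(s))$ and $x(g(s)) = \bar x(g(s)) + \varphi(g(s))$; the boundary points where both terms could contribute form a null set. Substituting into (\ref{7}) shows that $\bar x$ satisfies, for $t \geq t_0$,
\[
\dot{\bar x}(t) + a(t)\bar x(h(t)) - b(t)\bar x(g(t)) = F(t), \quad \bar x(t_0) = \varphi(t_0), \quad \bar x \equiv 0 \text{ on } [t_0-\max\{\tau,\delta\},\, t_0),
\]
where $F(t) := f(t) - a(t)\varphi(h(t)) + b(t)\varphi(g(t))$. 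The prehistory has thus been eliminated at the cost of enriching the forcing.

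\emph{Step 3: Duhamel representation, and the main obstacle.} By linearity of the zero-prehistory problem, its solution is the sum of $X(t,t_0)\varphi(t_0)$, the homogeneous response to the unit jump at $t_0$ scaled by $\varphi(t_0)$, and $\int_{t_0}^t X(t,s) F(s)\,ds$, the superposition of impulsive responses triggered along $[t_0,t]$. Substituting the explicit form of $F$ produces precisely the formula claimed. The genuinely delicate step is this last Duhamel identity: for a functional equation (rather than an ODE) one must verify joint measurability of $(t,s)\mapsto X(t,s)$, check that $u(t) := \int_{t_0}^t X(t,s)F(s)\,ds$ is absolutely continuous in $t$ with $\dot u(t) + a(t)u(h(t)) - b(t)u(g(t)) = F(t)$ almost everywhere (using Fubini and the fact that $X(t,s) = 0$ for $t < s$ together with $X(s,s)=1$), and finally invoke uniqueness from Step~1 to conclude. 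This is the calculation I would reproduce in full, since Steps~1 and 2 are elementary; the rest reduces to the machinery of abstract Volterra equations in $L^\infty_{\mathrm{loc}}$ already developed in \cite{AS}.
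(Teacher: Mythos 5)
The paper does not prove this statement at all: it is imported verbatim from \cite[Theorem 4.3.1]{AS}, so there is no internal proof to compare your attempt against. Your three-step outline --- short-interval contraction plus the method of steps for existence and uniqueness, splitting off the prehistory so that the initial function reappears as extra forcing, and the Duhamel identity for $u(t)=\int_{t_0}^t X(t,s)F(s)\,ds$ verified by differentiation under the integral and closed by uniqueness --- is precisely the standard argument behind that cited theorem, and you correctly identify the only genuinely delicate point (justifying $\dot u(t)+a(t)u(h(t))-b(t)u(g(t))=F(t)$, which uses $X(t,s)=0$ for $t<s$ and $X(s,s)=1$); the sketch is sound.
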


\begin{definition} 
We will say that equation (\ref{1}) is {\it uniformly exponentially stable} 
if there exist positive numbers $M$ and $\gamma$ such that 
the solution of problem (\ref{7}) with $f\equiv 0$ and an arbitrary $t_0 \geq 0$ has the estimate 
\begin{equation}\label{10a}
|x(t)|\leq M e^{-\gamma (t-t_0)} \sup_{t \in [t_0 - \max\{ \delta, \tau\},  t_0]}|\varphi(t)|, 
\quad t\geq t_0,
\end{equation}
where $M$ and $\gamma$ do not depend on either $t_0$ or $\varphi$.
The fundamental function $X(t,s)$ of equation (\ref{1}) {\it has an exponential estimate} if it satisfies
$$
|X(t,s)|\leq M_0 e^{-\gamma_0(t-s)}, \quad  t\geq s\geq 0
$$
for some positive numbers $M_0>0$ and $\gamma_0>0$. 

We will say that an equilibrium solution of either (\ref{51}) or (\ref{MG}) is {\it locally exponentially stable} (LES)
if the linearized equation around this equilibrium is uniformly exponentially stable.
\end{definition}

Existence of an exponential estimate for the fundamental function is equivalent \cite{BB1} 
to the exponential stability for equations with bounded delays (see~(a2)). 
Evidently we can shift the initial point to some  $t_1 \in (0,t_0)$ and 
obtain an exponential estimate for $X(t,s)$ with some other constants $M_1>0$, $\gamma_1>0$.

The following result is usually referred to as the Bohl--Perron principle.

\begin{lemma}[{\cite[Theorem 4.7.1]{AS}}]\label{lemma2}
Assume that the solution of problem {\rm(\ref{7})}, where $\varphi(t)=0$, 
$t\leq t_0$, is bounded on $[t_0,\infty)$ for any $f$ which is
essentially bounded on $[t_0,\infty)$. 
Then equation {\rm(\ref{1})} is uniformly exponentially stable.
\end{lemma}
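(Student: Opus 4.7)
The plan is to interpret the admissibility hypothesis operator-theoretically on $L^\infty$ and then extract an exponential bound on the fundamental function $X(t,s)$.

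By Lemma~\ref{lemma1}, taking $\varphi\equiv 0$, the solution of (\ref{7}) has the form $(Tf)(t)=\int_{t_0}^{t}X(t,s)f(s)\,ds$, so the hypothesis says that the linear map $T\colon L^\infty[t_0,\infty)\to L^\infty[t_0,\infty)$ is everywhere defined. I would first verify that $T$ is closed: if $f_n\to f$ and $Tf_n\to y$ uniformly, then a Gronwall estimate applied to (\ref{7}) on each finite subinterval $[t_0,T]$ gives $Tf_n\to Tf$ there, so $y=Tf$. The closed graph theorem then supplies a constant $K>0$ with $\|Tf\|_\infty\le K\|f\|_\infty$. Testing $T$ against $f_t(s):=\operatorname{sgn} X(t,s)\,\chi_{[t_0,t]}(s)$ (or bounded regularizations thereof) yields the uniform $L^1$-in-$s$ bound
\[
\int_{t_0}^{t}|X(t,s)|\,ds\le K \qquad\text{for every } t\ge t_0 .
\]

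Next I would convert this integral control into exponential decay by an exponential weighting. For a small $\gamma>0$, setting $y(t):=e^{\gamma(t-t_0)}x(t)$ where $x$ solves (\ref{7}) with $\varphi\equiv 0$, a direct computation shows that $y$ satisfies an equation of the same form,
\[
\dot y(t)+a(t)e^{\gamma(t-h(t))}y(h(t))-b(t)e^{\gamma(t-g(t))}y(g(t))-\gamma y(t)=e^{\gamma(t-t_0)}f(t).
\]
Under (a1)--(a2) the bounded delays ensure $e^{\gamma(t-h(t))},e^{\gamma(t-g(t))}\le e^{\gamma\max\{\tau,\delta\}}$, so the coefficient change and the extra term $-\gamma y$ amount to a perturbation of the operator $T$ whose $L^\infty$-norm is $O(\gamma)$. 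For $\gamma$ small enough, a Neumann-series argument shows that the admissibility of Step~1 persists for the weighted equation, hence $\|y\|_\infty\le K_\gamma \|e^{\gamma(\cdot-t_0)}f\|_\infty$; applied to $x(\cdot)=X(\cdot,t_0)$, this gives $|X(t,t_0)|\le M_0e^{-\gamma(t-t_0)}$. Since the same argument applies with $t_0$ replaced by any $s\ge 0$, we obtain the exponential estimate for $X(t,s)$; substituting it into the representation of Lemma~\ref{lemma1} produces (\ref{10a}) with constants independent of $t_0$ and $\varphi$.

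The main obstacle is the last passage, converting the $L^\infty$-admissibility (which a priori only yields an $L^1$ bound on $X(t,\cdot)$) into genuine exponential decay. The bounded delay condition (a2) is indispensable here: it keeps the exponentially weighted coefficients uniformly close to the original ones, so that a perturbation argument preserves the admissibility property in the weighted space. Once this step is secured, everything else is routine from the solution representation.
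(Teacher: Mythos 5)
The paper does not actually prove this lemma: it is quoted from Azbelev--Simonov \cite[Theorem 4.7.1]{AS} and used as a black box, so there is no internal proof to compare against. Your reconstruction follows the classical Bohl--Perron argument of that source (closed graph theorem to bound the Cauchy operator $T$, then exponential weighting plus a small-perturbation argument to upgrade $L^\infty$-admissibility to an exponential estimate), and the first two steps are essentially sound: the duality test $f_t=\operatorname{sgn}X(t,\cdot)\,\chi_{[t_0,t]}$ correctly yields $\int_{t_0}^t|X(t,s)|\,ds\le K$, and the weighted coefficients differ from the originals by $O(\gamma)$ precisely because of (a1) (essential boundedness of $a,b$) together with (a2) (bounded delays). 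Two technical points should be made explicit: the Neumann-series step needs to be run on finite windows $[t_0,T]$, where $\|y\|$ is a priori finite, with a $T$-independent constant (the same device the paper uses in its Theorems 2 and 3), and the admissibility constant must be controlled uniformly when the initial point $t_0$ is shifted (cf.\ Lemma~\ref{lemma_shift}).

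The one genuine gap is the final sentence of your second step. The estimate $\|y\|_\infty\le K_\gamma\|e^{\gamma(\cdot-t_0)}f\|_\infty$ applies to solutions with \emph{zero} initial data, whereas $X(\cdot,s)$ is the homogeneous solution with unit initial value at $s$; it is not of the form $Tf$, so you cannot simply ``apply'' the bound to it. A bridging device is needed: for instance, set $y(t)=e^{\gamma(t-s)}X(t,s)-e^{-(t-s)}$ for $t\ge s$ and $y(t)=0$ for $t<s$. Then $y$ has zero initial data and satisfies the weighted equation with a right-hand side bounded by $1+\|a\|_{[0,\infty)}+\|b\|_{[0,\infty)}$ uniformly in $s$, so the weighted admissibility gives $\sup_{t\ge s}e^{\gamma(t-s)}|X(t,s)|\le M_0$ with $M_0$ independent of $s$. (Alternatively, combine the integral bound $\int_s^te^{\gamma(t-u)}|X(t,u)|\,du\le K_\gamma$ with the differential equation itself to pass from an integral to a pointwise bound.) Once this is inserted, substituting the exponential estimate of $X(t,s)$ into the representation of Lemma~\ref{lemma1} to obtain (\ref{10a}) is routine, as you say.
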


\begin{remark}
\label{remark1}
The Bohl--Perron principle was stated above for equation (\ref{1}) with 
two delays, but it is valid for linear equations
with an arbitrary number of delays, for integro-differential equations, and for equations with  
a distributed delay.
\end{remark}

\begin{lemma}[{\cite[Lemmas 9,10]{BB5}}]
\label{lemma_shift}
Suppose that exponential estimate {\rm(\ref{10a})} is valid for a solution of {\rm(\ref{7})}, with $t_1$ 
instead of $t_0$ and
$M_1$ instead of $M$, and we have a bounded solution growth, i.e.~for any  $t_0 \leq t_1$, there is a number $A\geq 1$ not dependent on either $t_0$ or $t_1$
such that
\begin{equation}
\label{growth}
\sup_{t \in [t_0,t_1]} x(t) \leq A \sup_{t \in [t_0 - \max\{ \delta, \tau\},  t_0]}|\varphi(t)|.
\end{equation}
Then there exists $M>0$ such that a solution of {\rm(\ref{7})} satisfies {\rm(\ref{10a})} with the same 
$\gamma$.
\end{lemma}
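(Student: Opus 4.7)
The plan is to split the interval $[t_0,\infty)$ into the transient piece $[t_0,t_1]$ and the tail $[t_1,\infty)$, bound the solution separately on each piece, and combine. Throughout, write $\Phi:=\sup_{s\in[t_0-\max\{\delta,\tau\},\,t_0]}|\varphi(s)|$. The case $t_0\geq t_1$ is covered directly by the hypothesis with $M=M_1$, so I would reduce to $t_0<t_1$.

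On $[t_0,t_1]$ the growth estimate (\ref{growth}) gives immediately $|x(t)|\leq A\Phi$. Since $t_0\geq 0$ and $t\leq t_1$, one has $e^{-\gamma(t-t_0)}\geq e^{-\gamma t_1}$, so this bound is dominated by $A e^{\gamma t_1}e^{-\gamma(t-t_0)}\Phi$.

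For $t\geq t_1$ I would restart the equation at $t_1$, taking as the new initial function the restriction $\psi:=x|_{[t_1-\max\{\delta,\tau\},\,t_1]}$; by the uniqueness asserted in Lemma \ref{lemma1}, the resulting solution agrees with $x$ on $[t_1,\infty)$. Applying the assumed exponential estimate with $t_1$ in place of $t_0$ yields
$$
|x(t)|\leq M_1 e^{-\gamma(t-t_1)}\sup_{s\in[t_1-\max\{\delta,\tau\},\,t_1]}|x(s)|,\qquad t\geq t_1.
$$
On this sup interval, any $s\leq t_0$ satisfies $|x(s)|=|\varphi(s)|\leq\Phi$, while any $s\in[t_0,t_1]$ satisfies $|x(s)|\leq A\Phi$ by (\ref{growth}); hence the sup is at most $A\Phi$ (using $A\geq 1$). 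Rewriting $e^{-\gamma(t-t_1)}=e^{\gamma(t_1-t_0)}e^{-\gamma(t-t_0)}\leq e^{\gamma t_1}e^{-\gamma(t-t_0)}$ gives $|x(t)|\leq M_1 A e^{\gamma t_1}e^{-\gamma(t-t_0)}\Phi$. Choosing $M:=(1+M_1)A e^{\gamma t_1}$ then yields (\ref{10a}) uniformly for all $t\geq t_0$, with the same decay rate $\gamma$.

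The only real obstacle is checking that the restart at $t_1$ is legitimate, i.e.\ that the new initial function $\psi$ still satisfies assumption (a3). This is routine: $x$ is absolutely continuous on $[t_0,t_1]$ (hence bounded and Borel measurable there) and $\varphi$ is Borel measurable and bounded on $[t_0-\max\{\delta,\tau\},\,t_0]$, so $\psi$ inherits both properties on $[t_1-\max\{\delta,\tau\},\,t_1]$; Lemma \ref{lemma1} then applies and the argument above goes through. A minor bookkeeping point is that the constant $M$ depends on $t_1$ and $\gamma$, but since $t_1$ is fixed in the statement, this dependence is harmless.
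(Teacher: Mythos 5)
The paper does not prove this lemma: it is imported verbatim from \cite[Lemmas 9, 10]{BB5}, so there is no in-paper argument to compare against. Your proof is correct and is the standard one for such a statement: bound the transient on $[t_0,t_1]$ by the growth hypothesis, restart the problem at $t_1$ with initial function $x|_{[t_1-\max\{\delta,\tau\},t_1]}$ (legitimate by uniqueness and because $h(t),g(t)\geq t_1-\max\{\delta,\tau\}$ for $t\geq t_1$), and absorb the factor $e^{\gamma(t_1-t_0)}\leq e^{\gamma t_1}$ into the new constant $M$, which may depend on the fixed $t_1$ but not on $t_0\in[0,t_1]$ or on $\varphi$ --- exactly what (\ref{10a}) requires. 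The only cosmetic point worth noting is that your reduction of the case $t_0\geq t_1$ to ``the hypothesis with $M=M_1$'' implicitly reads the hypothesis as holding for every initial point $\geq t_1$, not only at $t_1$ itself; that is indeed how the lemma is invoked in the proof of Lemma~\ref{lemma_p}, so the reading is the intended one.
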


\begin{remark}
\label{remark2}
It follows from Lemma~\ref{lemma_shift} that in Lemma~\ref{lemma2} we can consider
boundedness of solutions not for all $f$ which are essentially bounded on $[t_0,\infty)$ but only
for those that vanish on $[t_0,t_1)$ for any fixed $t_1>t_0$.
\end{remark}

Further, 
in addition to a possible shift of the initial point,
we apply an argument transformation in the 
exponential estimate.

\pb

\begin{lemma}
\label{lemma_p}
Let $p:[0,\infty)\to [0,\infty)$ be a continuous increasing function,   
\begin{equation}
\label{10abc2}
\lim_{t \to \infty} p(t)=\infty
\end{equation}
and $t_0 \geq \max\{ \delta,\tau \}$.
Consider a set of continuous functions $$x:[p(t_0-\max\{ \delta,\tau \}),\infty) \to {\mathbb R}$$  satisfying
\begin{equation}\label{10abc1}
|x(p(t))|\leq M_1 e^{-\gamma_1 (p(t)-p(t_0))} \sup_{t \in [p(t_0-\max\{ \delta,\tau \}),p(t_0)]}| x(t)|,\quad t\geq t_0,
\end{equation}
where $M_1>0$ and $\gamma_1>0$ are independent of $t_0 \geq 0$ and of the values of $x(t)$
for $t \in [p(t_0-\max\{ \delta,\tau \}),p(t_0)]$.

Then, for any $t_1 \geq p(t_0)$ and $\sigma>0$, there exists $\sigma_1>0$ such that \lb
$ \sup_{t \leq t_1} | x(t)|<\sigma_1$ implies  $|x(t)|<\sigma$ for $t\geq t_1$,  and
$ \lim_{t \to \infty} x(t)=0$.

If, in addition, there exist  $T>0$ and $A>0$ such that
\begin{equation}
\label{10abc3}
\limsup_{t,u \to \infty,~t>u} \frac{p(t)-p(u)}{t-u} <A, \quad
\liminf_{t \to \infty} [p(t+T)-p(t)]>0
\end{equation}
then $x(t)$ satisfies {\rm(\ref{10a})} with some positive constant instead of $\max\{ \delta,\tau \}$, for some $M>0$, $\gamma>0$ not depending on 
$t_0 \geq \max\{ \delta,\tau \}$ and on the values $\varphi(t)$ of~$x(t)$ for $t \leq t_0$.
\end{lemma}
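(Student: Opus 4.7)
The plan is to rewrite \rf{10abc1} as a direct exponential decay for $x$ in its own argument and then, for the second assertion, to convert the $p$-time rate into a genuine $t$-time rate via \rf{10abc3}. Since $p$ is continuous and increasing with $p(t)\to\infty$ by \rf{10abc2}, every $s\ge p(t_0)$ equals $p(t)$ for some $t\ge t_0$ by the intermediate value theorem; so \rf{10abc1} is equivalent to the cleaner estimate
\[
|x(s)|\le M_1 e^{-\gamma_1(s-p(t_0))}\,\Phi,\qquad s\ge p(t_0),
\]
where $\Phi:=\sup_{u\in[p(t_0-\max\{\delta,\tau\}),\,p(t_0)]}|x(u)|$. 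This single inequality drives both assertions.

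For the first assertion I would fix $\sigma>0$ and $t_1\ge p(t_0)$, and set $\sigma_1:=\sigma/M_1$. The hypothesis $\sup_{s\le t_1}|x(s)|<\sigma_1$ in particular forces $\Phi\le\sigma_1$, and the display then gives $|x(s)|\le M_1\sigma_1 e^{-\gamma_1(s-p(t_0))}\le\sigma$ for every $s\ge t_1\ge p(t_0)$. The convergence $x(s)\to 0$ drops out of the same inequality, because $s-p(t_0)\to\infty$ by \rf{10abc2}.

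For the second assertion, the second inequality in \rf{10abc3} yields $T^*>0$ and $c>0$ with $p(t+T)-p(t)\ge c$ for all $t\ge T^*$, and iteration produces $p(t)-p(t_0)\ge \frac{c}{T}(t-t_0)-c$ for all $t\ge t_0\ge T^*$. The first inequality in \rf{10abc3} bounds the length of the initial $s$-interval by $p(t_0)-p(t_0-\max\{\delta,\tau\})\le A\max\{\delta,\tau\}$ once $t_0$ is large enough, so $\Phi$ is a supremum of $|x|$ over an interval of length at most $C:=A\max\{\delta,\tau\}$. Substituting both bounds into the main display yields an estimate of the form \rf{10a} with $C$ in place of $\max\{\delta,\tau\}$ and decay rate $\gamma:=\gamma_1 c/T$, for all $t_0$ past the threshold.

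The main obstacle is uniformity of the constants $M$ and $\gamma$ on the compact range $t_0\in[\max\{\delta,\tau\},T^*]$, where the asymptotic inequalities in \rf{10abc3} are not yet active. On this compact set both $p(t_0)$ and $p(t_0-\max\{\delta,\tau\})$ stay in a bounded set, so continuity and monotonicity of $p$ supply a uniform positive lower bound on $p(t)-p(t_0)$ once $t$ is slightly past the compact range, together with a uniform upper bound on the widths of the initial $s$-intervals. These let one absorb the pre-threshold values of $t_0$ into a larger multiplicative constant $M$ while retaining the decay rate $\gamma=\gamma_1 c/T$ from the asymptotic regime, which completes the argument.
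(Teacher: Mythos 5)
Your first assertion is proved exactly as in the paper: rewrite \rf{10abc1} as a decay estimate in the image variable $s=p(t)$ (using continuity, monotonicity and \rf{10abc2} so that $p$ sweeps out all of $[p(t_0),\infty)$), take $\sigma_1=\sigma/M_1$, and let the exponential factor give $x\to 0$. No issues there.

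In the second part you have the same two ingredients as the paper (the iterated lower bound $p(t)-p(t_0)\ge \frac{c}{T}(t-t_0)-c$ from the second inequality in \rf{10abc3}, and the bound $p(t_0)-p(t_0-\max\{\delta,\tau\})\le A\max\{\delta,\tau\}$ from the first), but the sentence ``substituting both bounds into the main display yields an estimate of the form \rf{10a}'' hides the actual content of this step and, as written, is inconsistent about which time scale the estimate lives in. The rate $\gamma_1 c/T$ you obtain is a decay rate in the \emph{pre-image} variable $t$ (it comes from $e^{-\gamma_1(p(t)-p(t_0))}\le e^{\gamma_1 c}e^{-\gamma_1\frac{c}{T}(t-t_0)}$), whereas the initial-interval constant $A\max\{\delta,\tau\}$ you quote is measured in the \emph{image} variable $s=p(t)$ — the variable in which the conclusion (the paper's final estimate $|x(t)|\le Me^{-\gamma(t-t_2)}\sup_{[t_2-A\max\{\delta,\tau\},t_2]}|x|$, $t_2\ge p(t_1)$) is stated. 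To pass from decay in $t-t_0$ to decay in $s-p(t_0)$ you must use the first inequality of \rf{10abc3} a second time, via $t-t_0\ge \frac{1}{A}\bigl(p(t)-p(t_0)\bigr)$; this is why the paper's rate is $\gamma=\gamma_1\varepsilon/(AT)$ and not $\gamma_1\varepsilon/T$. The omission does not destroy the qualitative conclusion (one may always shrink $\gamma$), but this conversion is precisely what the second half of the lemma is about and must be written out. Finally, for uniformity over the pre-asymptotic range of initial points the paper does not argue by compactness: it derives the estimate only for large initial points $t_2\ge p(t_1)$ and then invokes Lemma~\ref{lemma_shift}, observing that \rf{10abc1} itself supplies the bounded-growth condition \rf{growth} with constant $M_1$. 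Your compactness sketch can probably be completed, but it is the vaguest part of the proposal; the reduction via Lemma~\ref{lemma_shift} is both shorter and already available.
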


\begin{proof}
Let us fix $\sigma>0$, choose $t_1 \geq p(t_0)$, 
assume that (\ref{10abc2}) and (\ref{10abc1})  hold, and 
also that $ \sup_{t \in [p(t_0-\max\{ \delta,\tau \}),p(t_1)]} | x(t)|<\sigma_1$, where 
$\sigma_1 = \frac{\sigma}{M_1}$. 
Then $$ \sup_{t \in [p(t_0-\max\{ \delta,\tau \}),p(t_0)]} | x(t)| 
\leq  \sup_{t \leq \in [p(t_0-\max\{ \delta,\tau \}),t_1]} | x(t)| < \sigma_1,$$
so inequality (\ref{10abc1}) implies
$$ 
|x(p(t))|\leq M_1 e^{-\gamma_1 (p(t)-p(t_0))} \sup_{t \in [p(t_0-\max\{ \delta,\tau \}),p(t_0)]} | x(t)| < M_1 \sigma_1=\sigma,
\quad t \geq t_0.
$$

We have $ |x(p(t))|<\sigma$ for $t \geq t_0$, where by (\ref{10abc2}) 
and continuity, the function $p(t)$ takes all the values in $[p(t_0),\infty)$.
The function $p$ is monotone increasing, therefore $t \geq t_0$ yields that $p(t) \geq 
p(t_0)$, and $|x(t)|<\sigma$ for any $t \geq p(t_0)$, in particular, for $t \geq t_1$. 
Further, 
$ \lim_{t \to \infty}  e^{-\gamma_1 (p(t)-p(t_0))}=0$ and
(\ref{10abc2}) imply
$ \lim_{t \to \infty} x(t)=0$.

Next, let (\ref{10abc3}) hold.
By the second inequality in (\ref{10abc3}), there exist $t_1>0$ and $\varepsilon>0$ such that 
$p(t+T)-p(t)>\varepsilon$ for $t \geq t_1$. 
Hence
$$
p(t)-p(t_1) \geq \left[ \frac{t-t_1}{T} \right] \varepsilon > 
\varepsilon \left(  \frac{t-t_1}{T} -1 \right)=\frac{\varepsilon}{T} (t-t_1) - \varepsilon,
$$
where $[t]$ is the integer part of $t$. 

Without loss of generality, using the first inequality in (\ref{10abc3}), we assume
$$
\frac{p(t)-p(u)}{t-u} \leq A, \quad t>u \geq t_1.
$$
Substituting $u=t_1$, we have $p(t) \leq p(t_1) + A (t-t_1)$.
Denoting   $w=A(t-t_1)$, we notice that $ p(t)-p(t_1) \geq \frac{\varepsilon}{AT}w
-\varepsilon$, thus estimate (\ref{10abc1}) implies 
$$
x(p(t_1)+w)\leq M_1 \exp\left\{-\gamma_1 \left( \frac{\varepsilon}{AT} w - \varepsilon \right) \right\}  \sup_{t \in [p(t_1-\max\{ \delta,\tau \}), p(t_1)]} |x(t)|, \quad w>0,
$$
the inequality holds if the initial point $p(t_1)$ is substituted by any $t_2 \geq p(t_1)$.  
Note that $p(t_1-\max\{ \delta,\tau \})\geq p(t_1)-A \max\{ \delta,\tau \}$.
Thus
$$
|x(t)|\leq M e^{-\gamma (t-t_2)} \sup_{t \in [t_2-A \max\{ \delta,\tau \},t_2]} | x(t)|, 
$$
where 
$$
\gamma=\frac{\varepsilon}{AT} \gamma_1,\quad  M=M_1 e^{\gamma_1 \varepsilon}, \quad t_2 \geq p(t_1).
$$
Further, (\ref{10abc1}) yields that (\ref{growth}) holds for $t_1=t_2$ and $A=M_1$.
Thus, the fact that we can shift $t_2$ to any point $t_0 < t_2$ follows from Lemma~\ref{lemma_shift}. 
\qed\end{proof}

The following example illustrates the significance of condition
(\ref{10abc3}) in \lb
Lemma~\ref{lemma_p}.

\begin{example}
The equation
$$
\dot{x}(t)+\frac{1}{t} x(t)=0, \quad t\geq 1
$$
has the solution $x(t)= \frac{x(1)}{t}$ which tends to zero but has no exponential estimate.
However, after the substitution $p(t)=\ln t$, for which the second inequality in (\ref{10abc3}) fails,
we get the equation $\dot{x}(s)+x(s)=0$, its solution has the exponential estimate
$$x(s) = x(s_0)e^{-(s-s_0)}.$$
\end{example}

All assumptions, definitions and results formulated above for equation (\ref{1}) are naturally extended 
to other scalar linear delay differential equations investigated in the paper, without additional discussion.

Consider now a linear equation with a single delay and a non-negative coefficient
\begin{equation}\label{11a}
\dot{x}(t)+a(t)x(h_0(t))=0, \quad a(t)\geq 0,\quad  0\leq t-h_0(t)\leq \tau_0,
\end{equation}
and denote by $X_0(t,s)$ the fundamental function of equation (\ref{11a}).

\begin{lemma}[{\cite[Theorem 2.21]{ABBD}}]\label{lemma3}
Assume that $X_0(t,s)>0$, $t\geq s\geq t_0$. Then 
$$
\int_{t_0+\tau_0}^t X_0(t,s) a(s)ds\leq 1.
$$
\end{lemma}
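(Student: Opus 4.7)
The plan is to extract the inequality from the solution representation of Lemma~\ref{lemma1} applied to a well-chosen inhomogeneous problem for which the constant function is the unique solution.

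First, I would observe that $y(t) \equiv 1$ satisfies
$$
\dot y(t) + a(t) y(h_0(t)) = a(t), \quad t \geq t_0, \qquad y(t) = 1, \quad t \leq t_0.
$$
Viewing equation~(\ref{11a}) as equation~(\ref{1}) with $b \equiv 0$, the solution representation of Lemma~\ref{lemma1} applied with $f = a$ and $\varphi \equiv 1$ yields the identity
$$
1 = X_0(t,t_0) + \int_{t_0}^t X_0(t,s) a(s) \, ds - \int_{t_0}^t X_0(t,s) a(s) \varphi(h_0(s)) \, ds,
$$
where, following Lemma~\ref{lemma1}, we adopt the convention $\varphi(u) = 0$ for $u > t_0$.

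Next, I would exploit the delay bound: for $s > t_0 + \tau_0$ one has $h_0(s) \geq s - \tau_0 > t_0$, so $\varphi(h_0(s)) = 0$ on that range. The third integral is therefore supported in $[t_0, t_0 + \tau_0]$, on which $\varphi(h_0(s)) \in [0,1]$. For $t \geq t_0 + \tau_0$, splitting the middle integral at $t_0 + \tau_0$ rearranges the identity into
$$
\int_{t_0 + \tau_0}^t X_0(t,s) a(s) \, ds = 1 - X_0(t,t_0) - \int_{t_0}^{t_0 + \tau_0} X_0(t,s) a(s) \bigl[1 - \varphi(h_0(s))\bigr] ds.
$$
Using the hypothesis $X_0(t,s) > 0$ together with $a \geq 0$ and $1 - \varphi(h_0(s)) \in [0,1]$, both subtracted terms on the right are nonnegative, which yields the claimed bound.

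The only delicate point is the bookkeeping on the transition interval $[t_0, t_0 + \tau_0]$, where $h_0(s)$ may fall on either side of $t_0$; the argument closes neatly because the leftover correction term carries the favorable sign, precisely thanks to positivity of the fundamental function. Without the assumption $X_0 > 0$, one would obtain the same identity but could not sign the right-hand side.
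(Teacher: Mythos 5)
Your proof is correct. Note that the paper does not prove this lemma at all — it is imported verbatim from \cite[Theorem 2.21]{ABBD} — so there is no in-paper argument to compare against; your derivation (applying the solution representation of Lemma~\ref{lemma1} with $b\equiv 0$ to the constant solution $y\equiv 1$ of the inhomogeneous problem with $f=a$, $\varphi\equiv 1$, and then signing the leftover terms using $X_0>0$, $a\geq 0$ and the convention $\varphi=0$ beyond $t_0$) is precisely the standard proof given in that reference, and the bookkeeping on $[t_0,t_0+\tau_0]$ is handled correctly.
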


\begin{lemma}[{\cite[Theorem 2.7]{BB3},\cite[Theorem 3.1.1]{GL}}]\label{lemma4}
If for some $t_0\geq 0$
$$
 \int_{\min\{t_0,h_0(t)\}}^t a(s) ds\leq \frac{1}{e},\quad t\geq t_0
$$
then $X_0(t,s)>0$, $t\geq s\geq t_0$.

If in addition $a(t)\geq a_0>0$ then equation {\rm(\ref{11a})} is 
uniformly exponentially stable.
\end{lemma}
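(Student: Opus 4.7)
My plan is to establish the two assertions in order: first deduce positivity of $X_0$ from the $1/e$ hypothesis, and then extract the exponential estimate from positivity together with the lower bound on $a$.

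For positivity, I would rely on the characterization of scalar linear delay equations with a positive fundamental function, which states that $X_0(t,s)>0$ for all $t\geq s\geq t_0$ whenever there exists a nonnegative locally integrable $u$ on $[t_0,\infty)$ satisfying the generalized characteristic inequality
$$
u(t)\geq a(t)\exp\!\left(\int_{h_0(t)}^{t} u(r)\,dr\right),
$$
with the convention $u\equiv 0$ on $(-\infty,t_0)$ to handle the segment where $h_0(t)<t_0$. Motivated by the extremum of $x\mapsto xe^{-x}$, I would test the trial function $u(t)=e\,a(t)$. The hypothesis $\int_{\min\{t_0,h_0(t)\}}^{t} a(s)\,ds\leq 1/e$ yields
$$
a(t)\exp\!\left(\int_{h_0(t)}^{t} e\,a(r)\,dr\right) \leq a(t)\,e^{e\cdot (1/e)} = e\,a(t) = u(t),
$$
so the characteristic inequality is satisfied and positivity of $X_0$ follows.

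For the exponential stability statement, I would use positivity directly. Since $a\geq 0$ and $X_0>0$, the equation $\dot{X}_0(t,s)=-a(t)X_0(h_0(t),s)$ forces $\dot{X}_0(t,s)\leq 0$, so $X_0(\cdot,s)$ is non-increasing on $[s,\infty)$. Because $h_0(t)\leq t$, monotonicity gives $X_0(h_0(t),s)\geq X_0(t,s)$ whenever $h_0(t)\geq s$, so
$$
\dot{X}_0(t,s)\leq -a(t)\,X_0(t,s),\qquad t\geq s+\tau_0.
$$
Integration together with the lower bound $a(t)\geq a_0$ produces the exponential decay $X_0(t,s)\leq C\,e^{-a_0(t-s)}$ with $C=e^{a_0\tau_0}$. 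Inserting this into the analogue of Lemma~\ref{lemma1} for equation~(\ref{11a}), and noting that the correction integral coming from $h_0(s)<t_0$ is supported in the finite interval $[t_0,t_0+\tau_0]$, yields the estimate~(\ref{10a}) with $M$ and $\gamma>0$ independent of $t_0$ and $\varphi$.

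The hard step is the positivity characterization itself: the equivalence between $X_0>0$ and the solvability of the generalized characteristic inequality is a nontrivial fixed-point / monotone-iteration result contained in the cited references. Once it is granted, the rest reduces to the verification above, and the trial $u=ea$ makes transparent why the constant $1/e$ is the sharp threshold, reflecting $\max_{x\geq 0} xe^{-x}=1/e$ at $x=1$.
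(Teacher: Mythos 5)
This lemma is quoted in the paper as an auxiliary result (citing \cite[Theorem 2.7]{BB3} and \cite[Theorem 3.1.1]{GL}) and is not proved there, so there is no in-paper argument to compare against. Your proposal is correct and is essentially the standard proof from the cited sources: the substitution $u=ea$ into the generalized characteristic inequality does verify it, since the hypothesis gives $\int_{h_0(t)}^{t}a(s)\,ds\leq\int_{\min\{t_0,h_0(t)\}}^{t}a(s)\,ds\leq 1/e$ (with $u\equiv 0$ before $t_0$ handling the initial segment), and your second step — monotonicity of $X_0(\cdot,s)$ from positivity, hence $\dot X_0(t,s)\leq -a_0X_0(t,s)$ for $t\geq s+\tau_0$ and the bound $X_0(t,s)\leq e^{a_0\tau_0}e^{-a_0(t-s)}$ with constants independent of $s$ — is the usual route from a positive fundamental function plus a coefficient bounded away from zero to uniform exponential stability (one can also finish by invoking the equivalence, noted after Definition 2 of the paper, between an exponential estimate for $X_0$ and uniform exponential stability for bounded delays, rather than re-inserting into the solution representation). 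No gaps.
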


To extend stability results obtained for equation (\ref{1}) to equations with more general delays,
we will need  the following three ``transformation" results reducing terms with either distributed or 
several concentrated delays to a single term with a concentrated delay.

\begin{lemma}[{\cite[Lemma 5]{JMAA_lin_nonlin}}]\label{lemma5}
Assume that $a_k(t), h_k(t), k=1,\dots,m$ are measurable functions, $a_k(t)\geq 0$, $h_k(t)\leq t$, $k=1,\dots, m$ and $x$ is continuous on $[t_0, \infty)$.
Then there exists a measurable function 
$h_0$ satisfying 
$$h_0(t)\leq t, \quad \min_k h_k(t)\leq h_0(t)\leq \max_k h_k(t)$$ 
such that $$\sum_{k=1}^m a_k(t) x(h_k(t))=\left(\sum_{k=1}^m a_k(t)\right) x(h_0(t)).$$
\end{lemma}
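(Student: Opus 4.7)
I would view the desired identity as a mean-value (IVT) statement. Set $A(t):=\sum_{k=1}^m a_k(t)$ and $y(t):=\sum_{k=1}^m a_k(t)x(h_k(t))$, which are both measurable since $a_k$, $h_k$ are measurable and $x$ is continuous. On the (measurable) set where $A(t)=0$, every term of the sum vanishes (because $a_k(t)\geq 0$ forces each $a_k(t)=0$), so the identity holds trivially and I can just put $h_0(t):=t$, which lies in $[\min_k h_k(t),\max_k h_k(t)]$ since $h_k(t)\le t$ is not needed — actually, I'll set $h_0(t):=\max_k h_k(t)$ on this exceptional set to stay inside the prescribed interval.

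On the set $\{t:A(t)>0\}$ define the target value
\[
c(t):=\frac{y(t)}{A(t)}=\sum_{k=1}^{m}\frac{a_k(t)}{A(t)}\,x(h_k(t)).
\]
This is a convex combination of the $m$ numbers $x(h_k(t))$, hence
\[
\min_{1\le k\le m} x(h_k(t))\ \le\ c(t)\ \le\ \max_{1\le k\le m} x(h_k(t)).
\]
Let $h_m(t):=\min_k h_k(t)$ and $h_M(t):=\max_k h_k(t)$; these are measurable. The two extremal values of $x(h_k(t))$ above are attained at arguments lying in $[h_m(t),h_M(t)]$, and $x$ is continuous on this compact interval. The intermediate value theorem therefore furnishes at least one $s\in[h_m(t),h_M(t)]$ with $x(s)=c(t)$. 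Since $h_M(t)\le t$ by hypothesis, any such $s$ satisfies $s\le t$ automatically.

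\textbf{Measurable selection (the main obstacle).} The remaining issue is to choose such an $s$ as a \emph{measurable} function of $t$, because the IVT alone is purely pointwise. My plan is to define
\[
h_0(t)\ :=\ \inf\bigl\{\,s\in[h_m(t),h_M(t)]\ :\ x(s)=c(t)\,\bigr\},
\]
which is attained because $x(\cdot)-c(t)$ is continuous in $s$, so the zero set is closed and nonempty. To check measurability, I would use that for any $\alpha\in\RR$ one has
\[
\{t:h_0(t)\le \alpha\}\ =\ \Bigl\{t:\,\min_{s\in J(t,\alpha)}\bigl(x(s)-c(t)\bigr)\le 0\le \max_{s\in J(t,\alpha)}\bigl(x(s)-c(t)\bigr)\Bigr\},
\]
where $J(t,\alpha):=[h_m(t),\min\{\alpha,h_M(t)\}]$ (intersected appropriately, and declared empty when $\alpha<h_m(t)$). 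The min and max of a function continuous in $s$ over an interval with measurable endpoints are measurable in $t$, and $c$ is measurable, so the set on the right is measurable. Alternatively, I can quote the Kuratowski--Ryll-Nardzewski measurable selection theorem applied to the closed-valued multifunction $t\mapsto\{s\in[h_m(t),h_M(t)]:x(s)=c(t)\}$. Either route yields a measurable $h_0$ with $h_m(t)\le h_0(t)\le h_M(t)\le t$ and $x(h_0(t))=c(t)$, from which $A(t)x(h_0(t))=y(t)$ gives the claimed identity. The bulk of the work is the measurability step; the algebraic/IVT part is routine.
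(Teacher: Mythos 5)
Your argument is correct and is essentially the standard proof of this lemma: the paper itself only cites it from \cite{JMAA_lin_nonlin}, where the argument is exactly the convex-combination/intermediate-value observation you make, with measurability handled by a canonical selection such as your infimum. Your treatment of the degenerate set $\{t: \sum_k a_k(t)=0\}$ and the measurability of $t\mapsto \min_{s\in J(t,\alpha)}(x(s)-c(t))$ via continuity of $(u,v)\mapsto\min_{[u,v]}x$ composed with measurable endpoints is sound, so nothing is missing.
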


\begin{lemma}[{\cite{MCM2008}}]\label{lemma7}
Assume that $B(t,s)$ is a measurable non-decreasing in $s$ function,  $h(t)$ is a measurable function,
$h(t)\leq t$, and $x$ is continuous  on $[t_0, \infty)$. 
Then there exists a measurable function 
$h_0$, $h(t)\leq h_0(t)\leq t$
such that $$\int_{h(t)}^t x(s)d_s B(t,s)  =\left(\int_{h(t)}^t d_s B(t,s)\right) x(h_0(t)).$$
\end{lemma}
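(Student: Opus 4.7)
The plan is to apply the intermediate value theorem to $x$ on the compact interval $[h(t),t]$, after showing that the required value lies between $\min$ and $\max$ of $x$ on that interval.

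First I would set $I(t) := \int_{h(t)}^t d_s B(t,s) = B(t,t^+) - B(t,h(t))$ and $J(t) := \int_{h(t)}^t x(s)\, d_s B(t,s)$. Since $B(t,\cdot)$ is non-decreasing, $I(t)\geq 0$ and the Lebesgue-Stieltjes measure $d_s B(t,\cdot)$ is a non-negative measure of total mass $I(t)$ on $[h(t),t]$. Because $x$ is continuous on the compact interval $[h(t),t]$, it attains its minimum $m(t)$ and maximum $M(t)$ there, and integrating the pointwise inequality $m(t)\leq x(s)\leq M(t)$ against $d_s B(t,\cdot)$ gives
\begin{equation*}
m(t)\, I(t)\leq J(t)\leq M(t)\, I(t).
\end{equation*}

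Now I split into two cases. If $I(t)>0$, then $J(t)/I(t)\in [m(t),M(t)]$, and by the intermediate value theorem applied to the continuous function $x$ on $[h(t),t]$ there exists $h_0(t)\in [h(t),t]$ with $x(h_0(t))=J(t)/I(t)$, which is precisely the required identity. If $I(t)=0$, then $J(t)=0$ as well, both sides of the desired equation vanish, and we may simply set $h_0(t):=t$.

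The main obstacle will be showing that $h_0$ can be chosen in a measurable way, since the IVT only provides pointwise existence. To handle this, I would define explicitly
\begin{equation*}
h_0(t):=\inf\bigl\{s\in[h(t),t]:\ I(t)\,x(s)=J(t)\bigr\}
\end{equation*}
on the set $\{I(t)>0\}$ and $h_0(t):=t$ elsewhere. The set in braces is nonempty by the IVT step above and closed by continuity of $x$, so the infimum is attained. Measurability of $h_0$ then follows from measurability of $h$, $I$ and $J$ together with the joint continuity in $s$ of $x(s)$: for any $\alpha\in\RR$, the condition $h_0(t)<\alpha$ reduces to the existence of a rational $q\in[h(t),\alpha)\cap[h(t),t]$ with $I(t)\,x(q)=J(t)$, i.e.\ a countable union of measurable sets. (Alternatively, one may invoke a standard measurable selection theorem such as Kuratowski--Ryll-Nardzewski applied to the closed-valued multifunction $t\mapsto\{s\in[h(t),t]:I(t)\,x(s)=J(t)\}$.) This yields the required measurable $h_0$ with $h(t)\leq h_0(t)\leq t$ satisfying the identity.
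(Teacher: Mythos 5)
Your argument is the standard proof of this lemma: the paper itself imports the statement from \cite{MCM2008} without reproducing a proof, and the proof there (like that of its discrete counterpart, Lemma~\ref{lemma5}) is exactly the integral mean value theorem you use --- the Stieltjes measure $d_sB(t,\cdot)$ is non-negative, so the average $J(t)/I(t)$ lies between $\min$ and $\max$ of $x$ on $[h(t),t]$ and the intermediate value theorem supplies $h_0(t)$, with the degenerate case $I(t)=0$ handled trivially. So in substance you are on the intended route, and the pointwise part of your argument is correct.

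The one step that does not work as written is the reduction of measurability to rational test points. The condition $I(t)\,x(q)=J(t)$ is an \emph{equality}, and the level set $\{s:\ I(t)\,x(s)=J(t)\}$ may contain no rational points at all (take $x(s)=s$ with an irrational target value), so ``$h_0(t)<\alpha$'' is not equivalent to the existence of such a rational $q$; countable dense subsets are adequate for testing inequalities or infima of continuous functions, not for testing membership in a level set. The repair is easy and you already half-supply it: either invoke the Kuratowski--Ryll-Nardzewski selection theorem for the closed-valued measurable multifunction $t\mapsto\{s\in[h(t),t]:\ I(t)\,x(s)=J(t)\}$, as you suggest in parentheses, or observe that $h_0(t)<\alpha$ is equivalent to
$$\inf_{s\in[h(t),\,\min\{t,\alpha-1/n\}]}\bigl|I(t)\,x(s)-J(t)\bigr|=0\quad\text{for some }n\in\mathbb{N},$$
and an infimum of a continuous function over an interval \emph{can} be evaluated over rational points, which yields measurability of $h_0$ from measurability of $h$, $I$ and $J$. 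With that correction the proposal is complete.
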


As a particular case of Lemma~\ref{lemma7}, we obtain the following result.

\begin{lemma}\label{lemma6} 
Assume that $A(t,s)\geq 0,$ $ h(t)\leq t,$ and $x$ is continuous  on $[t_0, \infty)$. 
Then there exists a measurable function $h_0$, $h(t)\leq 
h_0(t)\leq t$ such that $$\int_{h(t)}^t 
A(t,s)x(s)ds =\left(\int_{h(t)}^t A(t,s)ds \right)x(h_0(t)).$$
\end{lemma}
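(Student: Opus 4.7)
The plan is to realize this as an immediate specialization of Lemma~\ref{lemma7}. The idea is to cook up a measure-generating function $B(t,s)$ whose Stieltjes differential coincides with $A(t,s)\,ds$, so that the statement of Lemma~\ref{lemma6} becomes literally the statement of Lemma~\ref{lemma7} applied to that $B$.

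Concretely, first I would define, for each fixed $t$, the function
\[
B(t,s) := \int_{h(t)}^{s} A(t,u)\,du, \qquad s \in [h(t),t].
\]
(Any fixed lower limit, e.g.\ $t_0$, would work equally well; using $h(t)$ is convenient because it makes $B(t,h(t))=0$.) Since $A(t,\cdot)\geq 0$, the map $s\mapsto B(t,s)$ is non-decreasing, and it is measurable jointly in $(t,s)$ under the standing measurability hypotheses on $A$; thus $B(t,s)$ satisfies the hypotheses of Lemma~\ref{lemma7}.

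Next I would observe that because $s\mapsto B(t,s)$ is absolutely continuous with Radon--Nikod\'ym derivative $A(t,s)$, the Lebesgue--Stieltjes integral against $d_s B(t,s)$ reduces to ordinary Lebesgue integration against $A(t,s)\,ds$, i.e.
\[
\int_{h(t)}^{t} x(s)\,d_s B(t,s) = \int_{h(t)}^{t} A(t,s)\,x(s)\,ds,
\qquad
\int_{h(t)}^{t} d_s B(t,s) = \int_{h(t)}^{t} A(t,s)\,ds.
\]
Applying Lemma~\ref{lemma7} to this $B$ produces a measurable function $h_0$ with $h(t)\leq h_0(t)\leq t$ such that
\[
\int_{h(t)}^{t} x(s)\,d_s B(t,s) = \left(\int_{h(t)}^{t} d_s B(t,s)\right) x(h_0(t)),
\]
which after the substitutions above is exactly the conclusion of Lemma~\ref{lemma6}.

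There is no real obstacle in this argument; the only thing to be careful about is the degenerate case when $\int_{h(t)}^{t} A(t,s)\,ds = 0$ on some set of $t$-values, where both sides of the identity vanish and $h_0(t)$ may be chosen arbitrarily within $[h(t),t]$ (say $h_0(t)=t$) without disturbing measurability. Beyond this, the proof is a one-line reduction via the identification $d_s B(t,s) = A(t,s)\,ds$.
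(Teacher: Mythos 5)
Your reduction is exactly how the paper obtains this result: it states Lemma~6 as "a particular case of Lemma~7," and the intended specialization is precisely $B(t,s)=\int_{h(t)}^{s}A(t,u)\,du$ with $d_sB(t,s)=A(t,s)\,ds$. Your write-up simply makes that one-line reduction explicit (including the harmless degenerate case), so it is correct and follows the paper's approach.
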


\section{Main results}

We assume in this section that conditions (a1)--(a3) hold for equation (\ref{1}), 
and corresponding conditions are satisfied for equations (\ref{4}) and (\ref{5}).

Equation (\ref{1}) is well studied for the case $h(t)\equiv t$. In particular, the following result is known.

vavava
\begin{theorem}[{\cite[Corollary 2.4]{BB3},\cite[Corollary 3.13]{BB1}}]\label{theorem1}
Assume that $a(t)\geq a_0>0,$ $ h(t)\equiv t$ and
$$
\limsup_{t\rightarrow\infty} \frac{b(t)}{a(t)}<1.
$$
Then equation {\rm(\ref{1})} is uniformly exponentially stable.
\end{theorem}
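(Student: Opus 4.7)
The plan is to apply the Bohl--Perron principle (Lemma~\ref{lemma2}) together with Remark~\ref{remark2}: it suffices to prove that, for some fixed $t_1 > t_0$ and every essentially bounded $f$ on $[t_0,\infty)$ vanishing on $[t_0,t_1)$, the solution of (\ref{7}) with $\varphi\equiv 0$ stays bounded on $[t_0,\infty)$. To exploit the limsup hypothesis uniformly, set $q := \limsup_{t\to\infty} b(t)/a(t) < 1$, pick $q_1 \in (q,1)$, and choose $t_1$ so large that $b(t) \leq q_1 a(t)$ for all $t \geq t_1$.

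The key observation is that when $h(t)\equiv t$, equation~(\ref{1}) becomes the scalar ODE
\[
\dot{x}(t) + a(t)x(t) = b(t)\,x(g(t)) + f(t),
\]
whose ``ordinary'' fundamental function $Y(t,s) = \exp\bigl(-\int_s^t a(u)\,du\bigr)$ is positive and, thanks to $a(t)\geq a_0 > 0$, satisfies $Y(t,s) \leq e^{-a_0(t-s)}$. Variation of parameters with $\varphi\equiv 0$ yields, for $t \geq t_1$,
\[
x(t) = Y(t,t_1)\,x(t_1) + \int_{t_1}^t Y(t,s)\bigl[b(s)\,x(g(s)) + f(s)\bigr]\,ds.
\]
On $[t_0,t_1]$ the solution is automatically bounded by some $N$ depending on $\|a\|_\infty,\|b\|_\infty,\|f\|_\infty$ and $t_1-t_0$; this is the analogue of an a~priori estimate for Carath\'eodory ODEs with bounded coefficients and bounded delay. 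For $t\geq t_1$, the crucial ingredient is
\[
\int_{t_1}^t Y(t,s)\,b(s)\,ds \leq q_1 \int_{t_1}^t Y(t,s)\,a(s)\,ds = q_1\bigl(1-Y(t,t_1)\bigr) \leq q_1 < 1,
\]
together with $\int_{t_1}^t Y(t,s)\,ds \leq 1/a_0$. Writing $\Phi(t) := \sup_{s\in[t_0,t]} |x(s)|$ and using $|x(g(s))| \leq \Phi(t)$ for $s\in[t_1,t]$, the integral representation gives
\[
|x(t)| \leq \Phi(t_1) + q_1\,\Phi(t) + \|f\|_\infty/a_0,
\]
so, passing to the sup on $[t_1,t]$, either $\Phi(t) = \Phi(t_1)$ or
\[
(1-q_1)\,\Phi(t) \leq \Phi(t_1) + \|f\|_\infty/a_0.
\]
In both cases $\Phi$ is bounded uniformly in $t$, hence $x\in L_\infty[t_0,\infty)$, and Lemma~\ref{lemma2} delivers uniform exponential stability.

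The main obstacle is purely bookkeeping: one has to separate the portion of the forced Volterra integral that sees $x$ on the ``data interval'' $[t_0,t_1]$ (which is bounded by the a~priori constant $N$) from the portion that sees $x$ on $[t_1,s]$ (where the $q_1$-contraction does the work). No appeal to positivity theory (Lemmas~\ref{lemma3}--\ref{lemma4}) or to the argument-transformation Lemma~\ref{lemma_p} is needed here, since the non-delay part of the equation already carries its own explicit exponentially decaying fundamental function; those tools will be required only when $h(t)\not\equiv t$.
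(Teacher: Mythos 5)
Your argument is correct, but note that the paper itself offers no proof of Theorem~\ref{theorem1}: it is imported verbatim from the cited references (\cite[Corollary 2.4]{BB3}, \cite[Corollary 3.13]{BB1}), so there is no in-paper proof to match against. Your route --- treat the non-delay term as the principal part, write the solution by variation of constants with the positive kernel $Y(t,s)=\exp\bigl(-\int_s^t a(u)\,du\bigr)$, and use the exact identity $\int_{t_1}^t Y(t,s)a(s)\,ds = 1-Y(t,t_1)\le 1$ together with $b\le q_1 a$ on $[t_1,\infty)$ to close a contraction estimate on $\Phi(t)=\sup_{[t_0,t]}|x|$ --- is self-contained and avoids all of the machinery the paper reserves for the genuinely delayed case ($h\not\equiv t$): the substitution $s=p(t)$, the positivity Lemmas~\ref{lemma3}--\ref{lemma4}, and Lemma~\ref{lemma_p}. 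The steps all check out: $Y(t,s)\le e^{-a_0(t-s)}$ gives $\int_{t_1}^tY(t,s)\,ds\le 1/a_0$; with $\varphi\equiv 0$ the value $x(g(s))$ is controlled by $\Phi(t)$ even when $g(s)<t_0$; $\Phi(t)$ is finite for each finite $t$ by local absolute continuity, so the inequality $(1-q_1)\Phi(t)\le\Phi(t_1)+\|f\|_\infty/a_0$ can legitimately be solved for $\Phi(t)$; and Lemma~\ref{lemma2} with Remark~\ref{remark2} then yields uniform exponential stability. Two small points of hygiene: the hypothesis $\limsup_{t\to\infty}b(t)/a(t)<1$ should be read as an essential limsup so that $b\le q_1a$ holds a.e.\ on $[t_1,\infty)$ (harmless, since $a,b$ are only measurable), and if you invoke Remark~\ref{remark2} then $f$ vanishes on $[t_0,t_1)$ and $\varphi\equiv 0$, so in fact $x\equiv 0$ on $[t_0,t_1]$ and your a priori constant $N$ is not even needed.
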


Let us fix an interval $I=[t_0,t_1]$, $t_1>t_0\geq 0$, and for any essentially  bounded on 
$[t_0,\infty)$ 
function define
$|f|_I=\esssup_{t\in I} |f(t)|$, $\|f\|_{[t_0,\infty)}=\esssup_{t\geq t_0} |f(t)|$, $a^+=\max \{ a,0 \}$.
We are in a position to state and prove the first main result of the present paper.

\begin{theorem}\label{theorem2}
Assume that 
\begin{equation}\label{cond1}
a(t)-b(t) \neq 0 \text{ almost everywhere {\rm(}a.e.{\rm)},} 
\quad \int_0^{\infty} [a(s)-b(s)] ds=\infty,
\end{equation}
and for some $t_0 \geq 0$
\begin{equation}
\label{star} 
\left\| \!\left( \!  \int_{h(\cdot)}^{ \, \cdot} \! \! [a(s) \!- \!b(s)]
ds \!- \!\frac{1}{e} \!\right)^{ \! \! \! +}\right\|_{[t_0,\infty)}
 \! \! \! \!+ \!2 \!\left\| \!\frac{b}{a \!- \!b} \right\|_{[t_0,\infty)} \!\left\| \int_{h(\cdot)}^{g(\cdot)} \! \! \! [a(s) \!- \!b(s)] ds 
\right\|_{[t_0,\infty)} \! \! \! \!
< \!1.
\end{equation} 
Then equation {\rm(\ref{1})}
is asymptotically stable.
If in addition there exists $T>0$ such that
\begin{equation}\label{cond1abc}
\liminf_{t \to\infty} \int_{t}^{t+T} [a(s)-b(s)] ds >0
\end{equation}
then {\rm(\ref{1})}
is uniformly exponentially stable.
\end{theorem}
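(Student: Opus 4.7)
The plan is to apply the Bohl--Perron principle (Lemma~\ref{lemma2} together with Remark~\ref{remark2}): for an arbitrary essentially bounded $f$ vanishing on $[t_0,t_1)$, I need to show that the solution of (\ref{7}) with $\varphi\equiv 0$ stays bounded on $[t_0,\infty)$. My first move is to rewrite (\ref{1}) as
$$\dot{x}(t)+c(t)x(h(t)) = b(t)\bigl[x(g(t))-x(h(t))\bigr]+f(t),\qquad c(t):=a(t)-b(t)\ge 0,$$
isolating the ``net'' positive coefficient. Then, because (\ref{cond1}) makes $p(t):=\int_{t_1}^t c(s)\,ds$ a strictly increasing continuous bijection of $[t_1,\infty)$ onto $[0,\infty)$, I pass to the new time $\tau=p(t)$. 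Setting $y(\tau)=x(p^{-1}(\tau))$, $H(\tau)=p(h(t))$, $G(\tau)=p(g(t))$, and $B(\tau)=b(t)/c(t)$, dividing through by $c(t)$ converts the equation into
$$y'(\tau)+y(H(\tau))=B(\tau)\bigl[y(G(\tau))-y(H(\tau))\bigr]+\tilde f(\tau),$$
in which the identities $\tau-H(\tau)=\int_{h(t)}^{t}c$ and $G(\tau)-H(\tau)=\int_{h(t)}^{g(t)}c$ turn (\ref{star}) into $\|(\tau-H(\tau)-1/e)^{+}\|_\infty+2\|B\|_\infty\|G-H\|_\infty<1$.

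Next I study the auxiliary equation $z'(\tau)+z(H(\tau))=0$. By Lemma~\ref{lemma4}, once $\tau-H(\tau)\le 1/e$ its fundamental function $Z$ is positive, and Lemma~\ref{lemma3} furnishes $\int Z(\tau,\sigma)\,d\sigma\le 1$. When the $1/e$-bound is violated, I replace $H$ by a trimmed delay $H_{\ast}$ with $\tau-H_{\ast}(\tau)\le 1/e$ (using Lemma~\ref{lemma5}/\ref{lemma7} to keep everything measurable) and absorb the mismatch $z(H)-z(H_\ast)$ into the forcing, producing an error controlled by $E_1:=\|(\tau-H-1/e)^{+}\|_\infty$. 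For the $B$-term I use $y(G(\tau))-y(H(\tau))=\int_{H(\tau)}^{G(\tau)}y'(\sigma)\,d\sigma$, substitute the equation for $y'(\sigma)$ itself, and combine with the bound $\int Z\le 1$ to obtain $|y(G(\tau))-y(H(\tau))|\le 2\|y\|_\infty\,|G(\tau)-H(\tau)|$. Feeding these into the Volterra fixed-point formula for $y$ yields
$$(1-E_1-2E_2)\,\|y\|_\infty\le C\,\|\tilde f\|_\infty,\qquad E_2:=\|B\|_\infty\|G-H\|_\infty,$$
so (\ref{star}) forces $\|y\|_\infty$ to be bounded, and Bohl--Perron returns asymptotic stability of $x$.

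For the uniform exponential estimate I apply Lemma~\ref{lemma_p} with the same $p(t)=\int_0^tc(s)\,ds$: the essential boundedness of $c$ supplies the first half of (\ref{10abc3}), while (\ref{cond1abc}) supplies the second, transferring exponential decay in $\tau$ back to exponential decay in $t$.

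The main obstacle is producing the sharp constant $2$ on $E_2$ in (\ref{star}). The naive triangle-inequality bound starting from $y'=-(1+B)y(H)+By(G)$ only gives $|y'|\le(1+2\|B\|_\infty)\|y\|_\infty$, leading to the cruder factor $\|B\|_\infty(1+2\|B\|_\infty)\|G-H\|_\infty$. Recovering the correct $2\|B\|_\infty\|G-H\|_\infty$ forces one to avoid any pointwise bound on $y'$; instead, one substitutes the equation for $y'$ inside the integral $\int_{H}^{G}y'\,d\sigma$ and re-uses $\int Z\le 1$, folding one redundant copy of the $y(H)$-perturbation back into the $E_1$-term rather than counting it against $E_2$. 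Making this bookkeeping rigorous while preserving the exact constants in (\ref{star}) is the technical crux of the argument.
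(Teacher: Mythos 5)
Your proposal follows essentially the same route as the paper: the decomposition with net coefficient $a-b$, the change of time $s=\int (a-b)$, the trimmed delay with $s-r_0(s)\le \frac1e$ so that Lemmas~\ref{lemma3} and~\ref{lemma4} apply, the self-referential estimate obtained by substituting the equation back into $\int_{H}^{G}y'$, and the Bohl--Perron plus Lemma~\ref{lemma_p} conclusion. The only (cosmetic) difference is that the paper closes the loop through $|\dot y|_I\le (1-E_2)^{-1}\left(|y|_I+\|f\|\right)$, keeping the factor $(1-E_2)^{-1}$ exact so that the smallness condition $\frac{E_1+E_2}{1-E_2}<1$ is literally $E_1+2E_2<1$, whereas your rounded intermediate bound $|y(G)-y(H)|\le 2\|y\|_\infty |G-H|$, if used verbatim, yields the marginally weaker condition $E_1(1+2E_2)+2E_2<1$ --- though the final inequality you display is the correct sharp one.
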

\begin{proof}
Rewrite equation (\ref{1}) as
\begin{equation}\label{1a}
\dot{x}(t)+[a(t)-b(t)]x(h(t))+b(t)[x(h(t))-x(g(t))]=0
\end{equation} 
and denote $$s=p(t):=\int_{t_0}^t [a(\zeta)-b(\zeta)] d\zeta,$$ where $p(t_0)=0$. 
By the assumptions of the theorem, $p$ is a strictly monotone increasing 
function satisfying $\lim_{t\rightarrow\infty} p(t)=\infty$. Let us make the substitution 
$t=p^{-1}(s)$, $x(t)=y(s)$, then 
$$ \dot{x}(t)=[a(p^{-1}(s))-b(p^{-1}(s))]\dot{y}(s),\quad
x(h(t))=y(h_0(s)), \quad x(g(t))=y(g_0(s)),
$$
and
$$
s-h_0(s)=\int_{h(t)}^t [a(\zeta)-b(\zeta)] d\zeta, \quad
s-g_0(s)=\int_{g(t)}^t [a(\zeta)-b(\zeta)] d\zeta.
$$
Denote 
$$
\tau_0=\esssup_{t\geq t_0} \int_{h(t)}^{t} [a(\zeta)-b(\zeta)] d\zeta,\quad 
\delta_0=\esssup_{t\geq t_0} \int_{g(t)}^{t} [a(\zeta)-b(\zeta)] d\zeta.
$$
Hence $s-h_0(s)\leq \tau_0<\infty$, $s-g_0(s)\leq \delta_0<\infty$.
Equation  (\ref{1a}) has the form
\begin{equation}\label{11}
\dot{y}(s)+y(h_0(s))+\frac{b(p^{-1}(s))}{a(p^{-1}(s))-b(p^{-1}(s))}\int_{g_0(s)}^{h_0(s)} \dot{y}(\zeta)d\zeta=0.
\end{equation}

To prove asymptotic stability
of equation (\ref{11}), consider the initial value problem
\begin{equation}\label{12}
\begin{alignedat}{2}
 \dot{y}(s)+y(h_0(s))+\frac{b(p^{-1}(s))}{a(p^{-1}(s))-b(p^{-1}(s))}\int_{g_0(s)}^{h_0(s)} 
\dot{y}(\zeta)d\zeta&=f(s),& \quad s&\geq 0,\\
y(s)=\dot{y}(s)&=0,&  s&\leq 0,
\end{alignedat}
\end{equation}
where $f$ is an essentially bounded function on $[0,\infty)$ such that 
\begin{equation}\label{add}
f(s)=0, \quad s\leq s_0=\max\left\{ \tau_0,\delta_0,\frac{1}{e} \right\}.
\end{equation}
Conditions (\ref{12}) and (\ref{add}) imply that the solution of (\ref{12})
satisfies $y(s)=$ \lb
$\dot{y}(s)=0,$ $s\leq s_0$.

Denote
$$
r_0(s)=\left\{\begin{array}{ll}
h_0(s), &  h_0(s)\geq s-\frac{1}{e},  \vspace{2mm} \\
 s-\frac{1}{e},&  h_0(s) < s-\frac{1}{e},\\
\end{array}\right.
$$
then $s-r_0(s)\leq \frac{1}{e}$. We can rewrite equation (\ref{12}) as
\begin{equation}\label{13}
\dot{y}(s) \!+ \!y(r_0(s))
 \!= \! - \!\int_{r_0(s)}^{h_0(s)} \! \! \! \dot{y}(\zeta)d\zeta \!- \! 
\frac{b(p^{-1}(s))}{a(p^{-1}(s)) \!- \!b(p^{-1}(s))} \!\int_{g_0(s)}^{h_0(s)} \! \! \!
\dot{y}(\zeta)d\zeta \!+ \!f(s).
\end{equation}
Let $Y_0(s,\zeta)$ be the fundamental function of the equation
\begin{equation}\label{3.8a}
\dot{y}(s)+y(r_0(s))=0.
\end{equation}
By Lemma \ref{lemma4} we have $Y_0(s,\zeta)>0$, and equation (\ref{3.8a}) is uniformly exponentially stable.

From (\ref{13}) and Lemma \ref{lemma1}, we get
\begin{equation}\label{add2}
\begin{split}
y(s) \!=& \!  - \! \!  \int_{0}^s \! \! Y_0(s,\zeta) \! \left[ \! \int_{r_0(\zeta)}^{h_0(\zeta)} \! \! \! 
\dot{y}(\xi)d\xi
 \!+ \! 
\frac{b(p^{-1}(\zeta))}{a(p^{-1}(\zeta)) \!- \!b (p^{-1}(\zeta))} \!\int_{g_0(\zeta)}^{h_0(\zeta)} \! \! 
\dot{y}(\xi)d\xi \!\right] \!d\zeta \\
&+ \!f_1(s),
\end{split}
\end{equation}
where $f_1(s)=\int_{0}^s Y_0(s,\zeta) f(\zeta) d\zeta$. Since $Y_0(s,\tau)$ has an exponential estimate, 
$\|f_1\|_{[0,\infty)}<\infty$.

Since the right-hand side of (\ref{13}) is equal to zero for $s\leq s_0$, where $s_0\geq \frac{1}{e}$,
the zero lower bound in the first integral in (\ref{add2}) can be replaced with $s_0$.

In the following, up to the end of the proof, we omit the index $[t_0,\infty)$ in the norm of 
the functions on $[t_0,\infty)$. 
Let us fix an interval $I=[0,s_1]$. 
By Lemma~\ref{lemma3}, we have $ 0 \leq \int_{s_0}^s Y_0(s,\zeta)  d\zeta \leq 1$, thus
\begin{equation}
\begin{split}
|y|_I & \leq  \left[ \left\| (r_0-h_0)^+ \right\|
+\left\|\frac{b}{a-b}\right\|\|g_0-h_0\|\right]|\dot{y}|_I+\|f_1\|,
\\
(r_0(s)-h_0(s))^+ & =  \left( (s-h_0(s))-(s-r_0(s)) \right)^{ \! +} \\
&=\left(\int_{h(t)}^t [a(\zeta)-b(\zeta) ]d\zeta-\frac{1}{e}\right)^{ \! +},
\\
g_0(s)-h_0(s) & =  (s-h_0(s))-(s-g_0(s))
\\
 & =  \int_{h(t)}^t [a(\zeta)-b(\zeta) ]d\zeta-\left(\int_{g(t)}^t [a(\zeta)-b(\zeta) ]d\zeta\right) \label{add_star}
\\ & = 
\int_{h(t)}^{g(t)} [a(\zeta)-b(\zeta) ]d\zeta. 
\end{split}
\end{equation}
Hence
\begin{equation}
\begin{split}
|y|_I & \leq  \left[\left\|\left( \int_{h(t\cdot)}^{ \, \cdot} [a(\zeta)-b(\zeta) ]d\zeta-\frac{1}{e}\right)^{ \! +}\right\| \right.
\\
&\quad+\left.\left\|\frac{b}{a-b}\right\|\left\| ~ \int_{h(\cdot)}^{g(\cdot)} [a(\zeta)-b(\zeta) ]d\zeta 
\right\|\right]|\dot{y}|_I+\|f_1\|.
\end{split}\label{14}
\end{equation}

From equality (\ref{12}) and the last part of (\ref{add_star}), we have 
$$
|\dot{y}|_I\leq |y|_I+\left\|\frac{b}{a-b}\right\|\left\| \int_{h(\cdot)}^{g(\cdot)} 
[a(\zeta)-b(\zeta) ]d\zeta\right\||\dot{y}|_I +\|f\|.
$$
Therefore
\begin{equation}\label{15}
|\dot{y}|_I \leq \frac{1}{1-\left\|\frac{b}{a-b}\right\|\left\| \int_{h(\cdot)}^{g(\cdot)} 
[a(\zeta)-b(\zeta) ]d\zeta\right\|}|{y}|_I+M_1,
\end{equation}
where the denominator is positive by (\ref{star}) and
$$
M_1=\frac{\|f\|}{1-\left\|\frac{b}{a-b}\right\|\left\| \int_{h(\cdot)}^{g(\cdot)} [a(\zeta)-b(\zeta) ]d\zeta\right\|}  .
$$
Inequalities  (\ref{14}) and  (\ref{15})  imply 
\begin{equation}\label{16}
|{y}|_I \! \leq \! \frac{\left\| \!\left( \!\int_{h(\cdot)}^{ \, \cdot} [a(\zeta) \!- \!b(\zeta) ]d\zeta \!- \!\frac{1}{e} \!\right)^{  \!+}\right\| \!
+ \!\left\|\frac{b}{a-b}\right\| \!\left\| \! \int_{h(\cdot)}^{g(\cdot)} [a(\zeta) \!- \!b(\zeta) ]d\zeta\right\|}
{1 \!- \!\left\|\frac{b}{a-b}\right\| \!\left\| \int_{h(\cdot)}^{g(\cdot)} [a(\zeta) \!- \!b(\zeta) ]d\zeta\right\|}|{y}|_I \!+ \!M_2,
\end{equation}
where
\begin{align*}
M_2 & =  \frac{\left\|\left(\int_{h(\cdot)}^{ \, \cdot} [a(\zeta)-b(\zeta) ]d\zeta-\frac{1}{e}\right)^{ \! +} \right\|
+\left\|\frac{b}{a-b}\right\|\left\|\int_{h(\cdot)}^{g(\cdot)} [a(\zeta)-b(\zeta) ]d\zeta\right\|}
{1-\left\|\frac{b}{a-b}\right\|\left\|\int_{h(\cdot)}^{g(\cdot)} [a(\zeta)-b(\zeta) ]d\zeta\right\|} M_1   \\ &\quad+ ~
\frac{\| f_1 \|} {1-\left\|\frac{b}{a-b}\right\|\left\|\int_{h(\cdot)}^{g(\cdot)} [a(\zeta)-b(\zeta) ]d\zeta\right\|}.
\end{align*}
Inequality (\ref{16}) has the form $|{y}|_I \leq  \alpha |{y}|_I+M_2$, where the numbers $\alpha>0$
and $M_2>0$ do not depend on the interval $I$. Inequality  (\ref{star}) implies  $\alpha<1$ and thus $|{y}|_I \leq 
\frac{M_2}{1-\alpha}$. 
Therefore for any essentially bounded function $f$ on $[t_0,\infty)$ (vanishing on  $[t_0,t_1]$ for some 
$t_1> t_0$), the solution of  problem (\ref{12}) is bounded on $[t_0,\infty)$. Thus by Lemma \ref{lemma2} equation (\ref{11})
is uniformly exponentially stable. 

Hence for the fundamental function $Y(s,l)$ of equation (\ref{11}) there exist $\lambda>0$ and  $M>0$ such that
$$
|Y(s,l)|\leq M e^{-\lambda (s-l)},\quad s\geq l\geq 0,
$$
and its solution $y(s)$ has an exponential estimate.
Since $y(s)=x(p(t))$, by (\ref{cond1}) and 
Lemma~\ref{lemma_p}, equation (\ref{1}) is asymptotically stable. 
Also, by Lemma~\ref{lemma_p}, under (\ref{cond1abc}), where  (\ref{cond1abc}) and global essential boundedness 
of $a$ and $b$ imply (\ref{10abc3}), equation (\ref{1}) is also uniformly exponentially stable. 
\qed\end{proof}

\begin{corollary}\label{corollary1}
Assume that {\rm(\ref{cond1})} is satisfied, and for some $t_0\geq 0$ one of the following two conditions holds:
\begin{itemize}
\item[\rm 1)]
\begin{align}\label{17}
\esssup_{t\geq t_0} \int_{h(t)}^t [a(s)-b(s)]ds &\leq \frac{1}{e}, \\
\left\|\frac{b}{a-b}\right\|_{[t_0,\infty)}\left\| ~ \int_{h(\cdot)}^{g(\cdot)}
 [a(\zeta)-b(\zeta) ]d\zeta\right\|_{[t_0,\infty)} &< \frac{1}{2};
\nonumber
\end{align}
\item[\rm 2)]
\begin{equation}\label{18}
\esssup_{t\geq t_0} \int_{h(t)}^t [a(s)-b(s)]ds > \frac{1}{e},~
\end{equation}
\[
\left\|\int_{h(\cdot)}^{ \, \cdot} \! \! [a(s) \!- \!b(s)]ds \right\|_{[t_0,\infty)}
 \! \! \!+ \!2 \!\left\| \!\frac{b}{a \!- \!b}\right\|_{[t_0,\infty)} \!\left\| \int_{h(\cdot)}^{g(\cdot)}
 \! \! [a(\zeta) \!- \!b(\zeta) ]d\zeta\right\|_{[t_0,\infty)} \! \!  \!
< \!1 \!+ \!\frac{1}{e}
\]
\end{itemize}
Then equation {\rm (\ref{1})} is asymptotically stable. 
If in addition there exists $T>0$ such that {\rm(\ref{cond1abc})} holds,
{\rm(\ref{1})} is uniformly exponentially stable.
\end{corollary}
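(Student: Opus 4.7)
My plan is to derive both conclusions directly from Theorem~\ref{theorem2} by showing that each of the two sets of hypotheses implies inequality (\ref{star}). The one computation that does the work is the following elementary identity: for any non-negative measurable function $\varphi$ on $[t_0,\infty)$ and constant $c\geq 0$,
\[
\left\|(\varphi-c)^+\right\|_{[t_0,\infty)} = \bigl(\|\varphi\|_{[t_0,\infty)} - c\bigr)^+,
\]
which I would apply with $\varphi(t) = \int_{h(t)}^t [a(s)-b(s)]\,ds$ (non-negative because $a\geq b$) and $c = 1/e$.

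Under the first hypothesis, the first line of (\ref{17}) says $\|\varphi\|_{[t_0,\infty)} \leq 1/e$, so the identity forces $\|(\varphi - 1/e)^+\|_{[t_0,\infty)} = 0$; condition (\ref{star}) then reduces to
\[
2\left\|\frac{b}{a-b}\right\|_{[t_0,\infty)} \left\|\int_{h(\cdot)}^{g(\cdot)} [a(s)-b(s)]\,ds\right\|_{[t_0,\infty)} < 1,
\]
which is exactly the second line of (\ref{17}) multiplied by $2$. Under the second hypothesis, (\ref{18}) asserts $\|\varphi\|_{[t_0,\infty)} > 1/e$, so the identity gives $\|(\varphi - 1/e)^+\|_{[t_0,\infty)} = \|\varphi\|_{[t_0,\infty)} - 1/e$; substituting this into (\ref{star}) and transposing $1/e$ to the right-hand side recovers precisely the displayed inequality following (\ref{18}).

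In either case (\ref{star}) is satisfied, so Theorem~\ref{theorem2} yields asymptotic stability of equation (\ref{1}); when (\ref{cond1abc}) additionally holds, the same theorem upgrades this to uniform exponential stability. The argument presents no real obstacle; the only subtlety worth writing down is the positive-part norm identity itself, which follows from the standard fact that $\esssup \max(\varphi - c, 0) = \max(\esssup \varphi - c, 0)$ for non-negative $\varphi$, together with the monotonicity of $x \mapsto x^+$.
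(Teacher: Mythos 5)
Your proposal is correct and follows essentially the same route as the paper: the authors likewise note that $\esssup_t (f(t)-c)^+$ equals $0$ when $\esssup_t f(t)\leq c$ and equals $\esssup_t f(t)-c$ otherwise, and then read off the two cases of condition (\ref{star}) in Theorem~\ref{theorem2}. No further comment is needed.
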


\begin{proof}
Since for any essentially bounded function $f$ and a number $c\geq 0$
we have
$$
\esssup_{t\geq 0}(f(t)-c)^+=\left\{
\begin{array}{lll}
 \!0,& \mbox{if}& \ \esssup_{t\geq 0} f(t)\leq c,\\
 \!\esssup_{t\geq 0} f(t)-c, \ & \mbox{if} & \ \esssup_{t\geq 0} f(t)>c,\\
\end{array}\right.
$$
we obtain the statement of the corollary by applying (\ref{star}) in Theorem~\ref{theorem2} in the 
two cases.
\qed\end{proof}

\begin{corollary}\label{corollary2}
Suppose that $g(t)\equiv t$, {\rm(\ref{cond1})} is satisfied, and for some $t_0\geq 0$ one of the following two assumptions 
holds: 
\begin{itemize}
\item[\rm 1)] condition {\rm(\ref{17})} and $$ \left\|\frac{b}{a-b}\right\|_{[t_0,\infty)}\left\|  \int_{h(\cdot)}^{ \, \cdot} [a(\zeta)-b(\zeta) 
]d\zeta\right\|_{[t_0,\infty)}<\frac{1}{2};$$ 
\item[\rm 2)] condition {\rm(\ref{18})} and $$ \left(1+2\left\|\frac{b}{a-b}\right\|_{[t_0,\infty)}\right)
\left\| ~ \int_{h(\cdot)}^{ \, \cdot} [a(\zeta)-b(\zeta) ]d\zeta\right\|_{[t_0,\infty)}<1+\frac{1}{e}.$$
\end{itemize}
Then equation {\rm(\ref{1})} is asymptotically stable. If in addition there exists $T>0$ such that {\rm(\ref{cond1abc})} holds,
{\rm(\ref{1})} is uniformly exponentially stable.
\end{corollary}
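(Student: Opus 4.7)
The plan is to derive Corollary 2 as an immediate specialization of Corollary 1 to the case $g(t)\equiv t$. Since no new analytic machinery is required, the proof amounts to substituting $g(t)=t$ into the hypotheses of Corollary 1 and observing that the inequalities reduce exactly to those claimed.

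First I would note that with $g(t)\equiv t$, the ``mixed'' integral appearing throughout Corollary 1, namely
$$\int_{h(t)}^{g(t)} [a(\zeta)-b(\zeta)]\,d\zeta,$$
collapses to $\int_{h(t)}^{t}[a(\zeta)-b(\zeta)]\,d\zeta$. Consequently the norm of this quantity on $[t_0,\infty)$ is simply $\bigl\|\int_{h(\cdot)}^{\,\cdot}[a-b]\bigr\|_{[t_0,\infty)}$, which is the object that appears in the hypotheses of Corollary 2.

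Next I would dispatch the two cases separately. In case~1), condition (17) is inherited verbatim; the product condition of Corollary 1, $\|b/(a-b)\|\cdot\|\int_{h(\cdot)}^{g(\cdot)}[a-b]\|<1/2$, becomes $\|b/(a-b)\|\cdot\|\int_{h(\cdot)}^{\,\cdot}[a-b]\|<1/2$ upon the substitution, matching case~1) of Corollary 2 exactly. In case~2), the inequality in Corollary 1 reads
$$\left\|\int_{h(\cdot)}^{\,\cdot}[a-b]\,ds\right\|_{[t_0,\infty)} + 2\left\|\frac{b}{a-b}\right\|_{[t_0,\infty)}\left\|\int_{h(\cdot)}^{g(\cdot)}[a-b]\,d\zeta\right\|_{[t_0,\infty)} < 1+\frac{1}{e},$$
and after the identification of the two integrals, the left-hand side factors as
$$\left(1+2\left\|\frac{b}{a-b}\right\|_{[t_0,\infty)}\right)\left\|\int_{h(\cdot)}^{\,\cdot}[a(\zeta)-b(\zeta)]\,d\zeta\right\|_{[t_0,\infty)},$$
which is precisely the stated hypothesis of case~2) of Corollary 2.

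Since (\ref{cond1}) is assumed and the corresponding hypotheses of Corollary~\ref{corollary1} have been verified in both cases, asymptotic stability of equation (\ref{1}) follows. Under the additional assumption (\ref{cond1abc}), Corollary~\ref{corollary1} delivers uniform exponential stability. There is essentially no obstacle here — the only thing to be careful about is tracking the algebraic identification of the two integrals and confirming that the constants $1/2$ and $1+1/e$ transfer without change.
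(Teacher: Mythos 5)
Your proposal is correct and matches the paper's (implicit) argument: the paper states Corollary~\ref{corollary2} without proof precisely because it is the direct specialization of Corollary~\ref{corollary1} to $g(t)\equiv t$, under which $\int_{h(\cdot)}^{g(\cdot)}[a-b]$ coincides with $\int_{h(\cdot)}^{\,\cdot}[a-b]$ and the two inequalities reduce exactly as you describe.
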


Consider now equation (\ref{5}), where 
\begin{equation}\label{19a}
a>b>0, \quad r(t)\geq 0, \quad \int_{0}^{\infty} r(s) ds=\infty, \quad r(t)\neq 0 \ \mbox{a.e.}
\end{equation}

\begin{corollary}\label{corollary3}
Assume that {\rm(\ref{19a})} and for some $t_0\geq 0$ one of the following conditions hold:
\begin{itemize}
\item[\rm 1)]
$ 
(a-b)\esssup_{t\geq t_0} \int_{h(t)}^t r(s)ds \leq \frac{1}{e},\quad
b \, \esssup_{t\geq t_0} \left|  \int_{h(t)}^{g(t)}r(s)ds\right|<\frac{1}{2};
$
\item[\rm 2)] $
(a-b)\esssup_{t\geq t_0} \int_{h(t)}^t r(s)ds > \frac{1}{e},
$ \\
$
(a-b)\esssup_{t\geq t_0} \int_{h(t)}^t r(s)ds +2b \, \esssup_{t\geq t_0} 
\left|\int_{h(t)}^{g(t)}r(s)ds\right|<1+\frac{1}{e}.
$
\end{itemize}
Then equation {\rm (\ref{5})} is asymptotically stable. 
If in addition there exists $T>0$ such that
\begin{equation}\label{cond1abcd}
\liminf_{t \to\infty} \int_{t}^{t+T} r(s)~ ds >0
\end{equation}
then {\rm (\ref{5})} is uniformly exponentially stable.
\end{corollary}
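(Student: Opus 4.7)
The plan is to derive Corollary~\ref{corollary3} as a direct specialization of Corollary~\ref{corollary1} to equation~(\ref{5}), viewed as an instance of equation~(\ref{1}) with the coefficient functions
\[
a(t) := a\, r(t), \qquad b(t) := b\, r(t).
\]
Since $a>b>0$ and $r(t)\ge 0$, hypothesis (a1) of Section~2 holds, and (a2) is inherited from the delay functions $h,g$. With this identification, one has
\[
a(t)-b(t)=(a-b)\,r(t), \qquad \frac{b(t)}{a(t)-b(t)}=\frac{b}{a-b},
\]
so the constancy of the ratio collapses the norm $\|b/(a-b)\|_{[t_0,\infty)}$ to the plain number $b/(a-b)$.

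Next, I would verify that the two standing assumptions of Theorem~\ref{theorem2} and Corollary~\ref{corollary1} follow from (\ref{19a}). The nondegeneracy requirement $a(t)-b(t)\ne 0$ a.e.\ translates to $(a-b)r(t)\ne 0$ a.e., which holds because $a>b$ and $r(t)\ne 0$ a.e. The divergence condition $\int_0^\infty[a(s)-b(s)]\,ds=\infty$ becomes $(a-b)\int_0^\infty r(s)\,ds=\infty$, which again holds by (\ref{19a}). Likewise, (\ref{cond1abc}) reads $(a-b)\liminf_{t\to\infty}\int_t^{t+T}r(s)\,ds>0$, which is exactly (\ref{cond1abcd}) once we note $a-b>0$.

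Finally, I would substitute into the two quantitative alternatives of Corollary~\ref{corollary1}. Using
\[
\int_{h(t)}^t[a(s)-b(s)]\,ds=(a-b)\int_{h(t)}^t r(s)\,ds, \quad
\int_{h(t)}^{g(t)}[a(s)-b(s)]\,ds=(a-b)\int_{h(t)}^{g(t)} r(s)\,ds,
\]
case~1 of Corollary~\ref{corollary1} becomes
\[
(a-b)\esssup_{t\ge t_0}\int_{h(t)}^t r(s)\,ds\le \tfrac1e, \qquad
\frac{b}{a-b}\cdot(a-b)\esssup_{t\ge t_0}\Bigl|\int_{h(t)}^{g(t)}r(s)\,ds\Bigr|<\tfrac12,
\]
which are precisely case~1 of Corollary~\ref{corollary3}. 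In case~2, the first inequality of Corollary~\ref{corollary1} becomes $(a-b)\esssup\int_{h(t)}^t r(s)\,ds>1/e$, while the second becomes
\[
(a-b)\esssup_{t\ge t_0}\int_{h(t)}^t r(s)\,ds+2b\,\esssup_{t\ge t_0}\Bigl|\int_{h(t)}^{g(t)}r(s)\,ds\Bigr|<1+\tfrac1e,
\]
matching case~2 of Corollary~\ref{corollary3}. Asymptotic stability then follows from Corollary~\ref{corollary1}, and under (\ref{cond1abcd}) the uniform exponential stability follows from the same corollary together with the already-verified (\ref{cond1abc}).

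There is no genuine obstacle here: the entire proof is a mechanical substitution. The only point requiring a small amount of care is treating the absolute value $|\int_{h(t)}^{g(t)} r(s)\,ds|$ in Corollary~\ref{corollary3}: since $r\ge 0$ but $g$ and $h$ are not ordered, the norm $\|\int_{h(\cdot)}^{g(\cdot)}[a-b]d\zeta\|$ of Corollary~\ref{corollary1} equals $(a-b)\esssup|\int_{h(t)}^{g(t)}r(s)\,ds|$, which is exactly what is written in Corollary~\ref{corollary3}.
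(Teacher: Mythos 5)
Your proposal is correct and is exactly the intended derivation: the paper states Corollary~\ref{corollary3} without a separate proof precisely because it is the specialization of Corollary~\ref{corollary1} (hence of Theorem~\ref{theorem2}) to $a(t)=a\,r(t)$, $b(t)=b\,r(t)$, under which (\ref{cond1}) and (\ref{cond1abc}) reduce to (\ref{19a}) and (\ref{cond1abcd}). Your handling of the constant ratio $b/(a-b)$ and of the absolute value in $\left|\int_{h(t)}^{g(t)}r(s)\,ds\right|$ matches the paper's conventions.
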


\begin{corollary}\label{corollary4}
Assume that {\rm (\ref{19a})} is satisfied,  $g(t)\equiv t$, and for some $t_0\geq 0$ one of the following conditions holds:
\begin{itemize}
\item[\rm 1)] $ \esssup_{t\geq t_0}\int_{h(t)}^t r(s)ds \leq \frac{1}{e(a-b)}, \quad
\esssup_{t\geq t_0} \int_{h(t)}^t r(s)ds < \frac{1}{2b};$
\item[\rm 2)] $ \esssup_{t\geq t_0} \int_{h(t)}^t r(s)ds >\frac{1}{e(a-b)},\quad
(a+b)\esssup_{t\geq t_0} \int_{h(t)}^t r(s)ds<1+\frac{1}{e}.
$
\end{itemize}
Then equation {\rm (\ref{4})} is  asymptotically stable.
If in addition there exists $T>0$ such that {\rm (\ref{cond1abcd})}
holds then  {\rm (\ref{4})} is
uniformly exponentially stable. 
\end{corollary}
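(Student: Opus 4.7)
The plan is to obtain Corollary~4 as a specialization of Corollary~3 (equivalently, of Corollary~2) to the case of separated constant coefficients, $a(t)=a\,r(t)$, $b(t)=b\,r(t)$, with $g(t)\equiv t$. First I would note that equation~(\ref{4}) is exactly equation~(\ref{5}) with $g(t)\equiv t$, so that the hypotheses of Corollary~3 may be checked directly and its conclusion invoked.

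Concretely, under the identification $a(t)=a\,r(t)$, $b(t)=b\,r(t)$, the key quantities appearing in Corollary~3 simplify as follows:
\begin{align*}
\int_{h(t)}^{t}[a(s)-b(s)]\,ds &= (a-b)\int_{h(t)}^{t} r(s)\,ds,\\
\int_{h(t)}^{g(t)} r(s)\,ds &= \int_{h(t)}^{t} r(s)\,ds = 0 \text{ if we take } g(t)\equiv t,
\end{align*}
so that the term $\bigl|\int_{h(t)}^{g(t)} r(s)\,ds\bigr|$ in Corollary~3 becomes $\int_{h(t)}^{t} r(s)\,ds$ only when we interpret $g$ through equation~(\ref{5}) before specialization; thus I would rather invoke Corollary~2 applied to equation~(\ref{1}) with the above choice, where $\|b/(a-b)\|=b/(a-b)$. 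In case~1, the assumptions of Corollary~2 read
\[
(a-b)\esssup_{t\geq t_0}\int_{h(t)}^{t} r(s)\,ds \leq \tfrac{1}{e}, \qquad
\tfrac{b}{a-b}\cdot(a-b)\esssup_{t\geq t_0}\int_{h(t)}^{t} r(s)\,ds < \tfrac{1}{2},
\]
which, after dividing the first by $a-b>0$ and the second by $b$, yield exactly part~1 of Corollary~4. In case~2, the second hypothesis of Corollary~2 becomes
\[
\bigl(1+\tfrac{2b}{a-b}\bigr)(a-b)\esssup_{t\geq t_0}\int_{h(t)}^{t} r(s)\,ds = (a+b)\esssup_{t\geq t_0}\int_{h(t)}^{t} r(s)\,ds < 1+\tfrac{1}{e},
\]
which together with the strict inequality in the first hypothesis matches part~2 of Corollary~4.

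Finally I would verify the auxiliary conditions. Condition~(\ref{19a}) together with $a>b>0$ gives $a(t)-b(t)=(a-b)r(t)\geq 0$, nonzero a.e., and $\int_0^\infty [a(s)-b(s)]\,ds=(a-b)\int_0^\infty r(s)\,ds=\infty$, so condition~(\ref{cond1}) holds. Similarly, (\ref{cond1abcd}) immediately implies (\ref{cond1abc}) with the same $T$, since $a-b>0$. Hence all hypotheses of Corollary~2 are satisfied, and Corollary~4 follows at once.

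I expect no real obstacle here; the only task is careful bookkeeping to confirm that the suprema and the constant factor $\|b/(a-b)\|=b/(a-b)$ combine exactly as in the stated inequalities of parts~1 and~2, and that the asymptotic/exponential stability dichotomy of Corollary~2 transfers under (\ref{cond1abcd}).
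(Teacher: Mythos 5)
Your derivation is correct and is exactly the route the paper intends: Corollary~\ref{corollary4} is the specialization of Corollary~\ref{corollary2} (equivalently of Corollary~\ref{corollary3} with $g(t)\equiv t$) to $a(t)=a\,r(t)$, $b(t)=b\,r(t)$, and your bookkeeping of the constant $b/(a-b)$, of (\ref{cond1}) from (\ref{19a}), and of (\ref{cond1abc}) from (\ref{cond1abcd}) is accurate. The only blemish is the parenthetical claim that $\int_{h(t)}^{g(t)} r(s)\,ds=0$ when $g(t)\equiv t$ --- it equals $\int_{h(t)}^{t} r(s)\,ds$, which is generally nonzero --- but you abandon that line immediately and the argument via Corollary~\ref{corollary2} is unaffected.
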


\begin{example}\label{example1}
Consider  test equation (\ref{3}). We will estimate 
the values of the parameter $b$ for which the condition 2) of 
Corollary~\ref{corollary4} holds. Here $a=0.6$ and $r(t)=\sin^2 t$.
We have
$
\int_{t}^{t+2} \! \sin^2 s \, ds\geq \frac{1}{2},
$ 
hence condition (\ref{cond1abcd}) holds with $T=2$. Also
\[
\int_{t}^{t+2} \! \! \!\!\! \sin^2 \! \! s \, ds \!= \! \!
1 \!+ \!\frac{1}{4}[\sin(2t) \!- \!\sin(2(t \!- \!2) \!)] \!= \! \!1 \!+ \!\frac{1}{2}\sin 2 \cos(2(t \!- \!1) \!)
 \! \!\leq  \!\! 1 \!+ \!\frac{1}{2} \sin 2 \! \! < \! \! 1.4547.
\]
We easily verify that Part 1 of Corollary~\ref{corollary4} cannot be applied. 
The first inequality in Part 2 of Corollary~\ref{corollary4} is 
$$ b< 0.6 - \frac{1}{1.4547 e} \approx 0.3471,$$ 
while the second inequality 
$ 
(0.6+b) 1.4547<1+\frac{1}{e} $
implies
$$
b<\frac{1+\frac{1}{e}}{1.4547}-0.6\approx 0.3403.
$$
Hence equation (\ref{3}) is uniformly exponentially stable for $b<0.34$. We recall that the best known estimate \cite{BB2}
was $b<0.26$.
\end{example}

Below we present the next main result of the paper.

\begin{theorem}\label{theorem3}
Assume that 
\begin{equation}\label{cond2}
a(t)\neq 0 \ \mbox{a.e.}, \quad \int_0^{\infty} a(s) ds=\infty 
\end{equation}
and for some $t_0\geq 0$
\begin{equation}\label{18abc}
\esssup_{t\geq t_0}\frac{b(t)}{a(t)}<1, \quad \esssup_{t\geq t_0} \left|  \int_{h(t)}^{g(t)} a(s) ds\right|<1,
\end{equation}
\begin{equation}\label{18a}
\hspace*{-6pt}\left\|\left(  \int_{h(\cdot)}^{ \, \cdot} a(s) ds-\frac{1}{e}\right)^{ \! +}\right\|_{[t_0,\infty)}
<\frac{1-\left\|\frac{b}{a}\right\|_{[t_0,\infty)}}{\left\|1-\frac{b}{a}\right\|_{[t_0,\infty)}}
\left(1-\left\|  \int_{h(\cdot)}^{g(\cdot)} a(s) ds\right\|_{[t_0,\infty)}\right)
\end{equation}
Then equation {\rm(\ref{1})} is asymptotically stable.
If in addition there exists $T>0$ such that the condition
\begin{equation}\label{18b}
\liminf_{t\rightarrow\infty}\int_{t}^{t+T} a(s)ds>0
\end{equation}
 holds then {\rm(\ref{1})} is uniformly exponentially stable.
\end{theorem}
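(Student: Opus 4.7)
The plan is to mirror the proof of Theorem~\ref{theorem2}, but with the substitution $s = p(t) := \int_{t_0}^t a(\zeta)\,d\zeta$ (strictly increasing and surjective onto $[0,\infty)$ by (\ref{cond2})) in place of $\int(a - b)$. Writing $y(s) = x(p^{-1}(s))$, equation (\ref{1}) transforms into
$$
\dot y(s) + y(h_0(s)) - \frac{b(p^{-1}(s))}{a(p^{-1}(s))}\, y(g_0(s)) = 0,
$$
where $s - h_0(s) = \int_{h(t)}^t a\,d\zeta$ and $s - g_0(s) = \int_{g(t)}^t a\,d\zeta$; both delays are bounded, and the second inequality in (\ref{18abc}) keeps $|h_0(s) - g_0(s)|$ strictly below $1$.

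By Lemma~\ref{lemma2} and Remark~\ref{remark2}, it suffices to prove that the forced equation (with $f$ essentially bounded and vanishing on $[0, s_0)$, $s_0 := \max\{\tau_0, \delta_0, 1/e\}$) has a bounded solution on arbitrary intervals $I = [0, s_1]$. Setting $r_0(s) := \max\{h_0(s), s - 1/e\}$, I would recast the forced equation as
$$
\dot y(s) + y(r_0(s)) = - \int_{r_0(s)}^{h_0(s)} \dot y(\xi)\,d\xi + \tfrac{b(p^{-1}(s))}{a(p^{-1}(s))}\, y(g_0(s)) + f(s);
$$
since the fundamental function of $\dot y + y(r_0) = 0$ is positive by Lemma~\ref{lemma4} (as $s - r_0(s) \leq 1/e$) and satisfies the integral bound of Lemma~\ref{lemma3}, the solution representation (Lemma~\ref{lemma1}) yields
$$
(1 - \|b/a\|)\,|y|_I \,\leq\, \Bigl\|\bigl(\textstyle\int_{h(\cdot)}^{\cdot} a\,ds - \tfrac{1}{e}\bigr)^{\!+}\Bigr\|\,|\dot y|_I \,+\, \|f_1\|,
$$
with $f_1(s) := \int_0^s Y_0(s,\zeta) f(\zeta)\,d\zeta$ bounded. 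For the companion bound on $|\dot y|_I$, I center the decomposition at $g_0$: using $y(h_0) = y(g_0) + \int_{g_0}^{h_0}\dot y\,d\xi$ converts the forced equation into
$$
\dot y(s) + \bigl(1 - \tfrac{b}{a}\bigr)\, y(g_0(s)) + \int_{g_0(s)}^{h_0(s)}\dot y(\xi)\,d\xi = f(s),
$$
whence
$$
\bigl(1 - \bigl\|\textstyle\int_{h(\cdot)}^{g(\cdot)} a\,ds\bigr\|\bigr)\,|\dot y|_I \,\leq\, \|1 - b/a\|\,|y|_I + \|f\|.
$$
Eliminating $|\dot y|_I$ between the two inequalities gives $(1 - \alpha)\,|y|_I \leq M$ with $\alpha$, $M$ independent of $I$; the hypothesis (\ref{18a}) is exactly $\alpha < 1$. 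Hence $y$ stays bounded on $[0,\infty)$ for every admissible $f$, and Lemma~\ref{lemma2} yields uniform exponential stability of the transformed equation.

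Lemma~\ref{lemma_p} returns the conclusion to the variable $t$: (\ref{cond2}) supplies (\ref{10abc2}) and therefore asymptotic stability of (\ref{1}), while (\ref{18b}) together with the essential boundedness of $a$ delivers (\ref{10abc3}), upgrading the conclusion to uniform exponential stability. The main subtlety is the choice of decomposition for the $|\dot y|_I$ estimate: the naive $h_0$-centered splitting $\dot y + (1 - b/a)\,y(h_0) + (b/a)\int_{g_0}^{h_0}\dot y = f$ inserts an extra factor $\|b/a\|$ inside $1 - \|\textstyle\int_h^g a\,ds\|$ and leads to a different (slightly weaker) sufficient condition; recentering at $g_0$ is what recovers (\ref{18a}) verbatim.
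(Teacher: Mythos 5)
Your proposal is correct and follows essentially the same route as the paper's own proof: the same substitution $s=p(t)=\int_{t_0}^t a(\zeta)\,d\zeta$, the same auxiliary delay $r_0(s)=\max\{h_0(s),s-\tfrac1e\}$ with Lemmas~\ref{lemma1}, \ref{lemma3} and \ref{lemma4}, the same pair of estimates for $|y|_I$ and $|\dot y|_I$, and the same return to the original variable via Lemma~\ref{lemma_p}. The ``$g_0$-centered'' decomposition you single out as the key subtlety is precisely the one the paper uses to obtain its inequality (\ref{23}), so there is no real divergence between the two arguments.
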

\begin{proof}
We proceed similarly to the proof of Theorem~\ref{theorem2}.
Denote $s=p(t):=\int_{t_0}^t a(\zeta) d\zeta$, $p(t_0)=0$. 
By the conditions on $a$, the function $p$ is strictly monotone 
increasing and $\lim_{t\rightarrow\infty} p(t)=\infty$. After the substitution 
$t=p^{-1}(s)$, $x(t)=y(s)$ we have 
$$
\dot{x}(t)=a(p^{-1}(s))\dot{y}(s), \quad x(h(t))=y(h_0(s)), \quad x(g(t))=y(g_0(s)),
$$
where 
$$
s-h_0(s)=\int_{h(t)}^t a(\zeta) d\zeta,  \quad s-g_0(s)=\int_{g(t)}^t a(\zeta) d\zeta,
$$
and equation  (\ref{1}) has the form
\begin{equation}\label{19}
\dot{y}(s)+y(h_0(s))-\frac{b(p^{-1}(s))}{a(p^{-1}(s))} y(g_0(s))=0
\end{equation}
In order to prove asymptotic stability of 
equation (\ref{19}), consider the initial value problem
\begin{equation}\label{20}
\begin{alignedat}{2}
\dot{y}(s)+y(h_0(s))-\frac{b(p^{-1}(s))}{a(p^{-1}(s))}y(g_0(s))&=f(s),&\quad s&\geq 0,\\
y(s)=\dot{y}(s)&=0,& ~s&\leq 0,
\end{alignedat}
\end{equation}
where $f$ is an essentially bounded function on $[s_0,\infty)$ such that 
$$f(s)=0, \quad s\leq s_0=\max\left\{ \tau_0,\delta_0,\frac{1}{e}
  \right\},$$
$$\tau_0=\esssup_{t \geq t_0} \int_{h(t)}^t a(\varsigma) d\varsigma,\quad
\delta_0=\esssup_{t \geq t_0} \int_{g(t)}^t a(\varsigma) d\varsigma.$$

Denote
$$
r_0(s)=\left\{ \! \begin{array}{ll}
h_0(s), &   h_0(s) >  s-\frac{1}{e}, \vspace{2mm} \\
s-\frac{1}{e}, \ &  h_0(s)\leq s-\frac{1}{e}.\\
\end{array}\right.
$$
We have $s-r_0(s)\leq \frac{1}{e}$. Equation (\ref{20}) can be rewritten as
\begin{equation}\label{21}
\dot{y}(s)+y(r_0(s))= \int_{h_0(s)}^{r_0(s)} \dot{y}(\zeta)d\zeta+
\frac{b(p^{-1}(s))}{a(p^{-1}(s))}y(g_0(s))+f(s).
\end{equation}
Let $Y_0(s,\zeta)$ be the fundamental function of the equation
\begin{equation}\label{12a}
\dot{y}(s)+y(r_0(s))=0.
\end{equation}
By Lemma \ref{lemma4}, we get that $Y_0(s,\tau)>0$ and equation (\ref{12a}) is uniformly exponentially stable.
From (\ref{21}) and Lemma \ref{lemma1}, we have
$$
y(s)=\int_{s_0}^s Y_0(s,\zeta) \left[ \int_{h_0(\zeta)}^{r_0(\zeta)} \dot{y}(\xi)d\xi
+
\frac{b(p^{-1}(\zeta))}{a(p^{-1}(\zeta))}y(g_0(\zeta))\right]d\zeta+f_1(s),
$$
where $f_1(s)=\int_{s_0}^s Y_0(s,\zeta) f(\zeta) d\zeta$. 
Since $Y_0(s,\zeta)$ has an exponential estimate, $\|f_1\|_{[s_0,\infty)}<\infty$.

Further we omit the index $[t_0,\infty)$ in the norm of 
the functions on $[t_0,\infty)$  and assume $I=[s_0,s_1]$, where $s_1>s_0$ is fixed.
By Lemma \ref{lemma3}, 
$$
|y|_I\leq \|(r_0-h_0)^+\| \, |\dot{y}|_I+\left\|\frac{b}{a}\right\| \, |y|_I+\|f_1\|.
$$
Also,
\begin{gather*}
(r_0(s) \!- \!h_0(s))^+ \! = \!  (s \!- \!h_0(s)) \!- \!(s \!- \!r_0(s))^+ \!= \!\left(\int_{h(t)}^t \! \! \! a(\zeta)d\zeta \!- \!\frac{1}{e} \!\right)^{ \! +} \! \!,
\\
g_0(s) \!- \!h_0(s) \!  = \!  (s \!- \!h_0(s)) \!- \!(s \!- \!g_0(s))
 \!=  \! \! \int_{h(t)}^t \! \! \! a(\zeta)d\zeta \!- \! \!\int_{g(t)}^t \! \! \! a(\zeta)d\zeta \!= \!
 \!\int_{h(t)}^{g(t)} \! \! \! a(\zeta)d\zeta.
\end{gather*}

Therefore
\begin{equation}\label{22}
|y|_I\leq\frac{ \|(r_0-h_0)^+\|}{1-\left\|\frac{b}{a}\right\|}|\dot{y}|_I+M_1, \quad 
\mbox{where} \quad M_1 := \frac{\|f_1\|}{1-\| \frac{b}{a} \|}.
\end{equation}
Rewriting equation  (\ref{20}) as 
$
\dot{y}(s) \!= \!\int_{h_0(s)}^{g_0(s)}\dot{y}(\zeta) d\zeta \!- \!\left( \!1 \!- \!\frac{b(p^{-1}(s))}{a(p^{-1}(s))} \! \right)  \!
y(g_0(s))+ \!f(s)
$
implies
$$
|\dot{y}|_I\leq \|h_0-g_0\| \, |\dot{y}|_I+\left\|1-\frac{b}{a}\right\||y|_I+\|f\|.
$$
Then 
\begin{equation}\label{23}
|\dot{y}|_I \leq \frac{\left\|1-\frac{b}{a}\right\|}{1- \|h_0-g_0\|}|y|_I+M_2, \quad \mbox{where} \quad M_2:=\frac{\| f\|}{1- 
\|h_0-g_0\|} \, .
\end{equation}
Inequalities  (\ref{22}) and  (\ref{23})  yield that 
\begin{equation}\label{24}
|{y}|_I \! \!\leq \! \! \frac{ \|(r_0 \!- \!h_0)^{ \! +} \!\|}{1 \!- \!\left\|\frac{b}{a}\right\|}
\frac{\left\|1 \!- \!\frac{b}{a}\right\|}{1 \!- \! \|h_0 \!- \!g_0\|}|y|_I  \!+ \!M,\ \
 \mbox{where} \ \
M \! \! := \! \frac{ \|(r_0 \!- \!h_0)^{ \! +} \!\|}{1 \!- \!\left\|\frac{b}{a}\right\|} M_2 \!+ \!M_1.
\end{equation}
We have
$$
 \|(r_0(s)-h_0(s))^+\|=\left\|\left(  \int_{h(\cdot)}^{ \, \cdot} a(s) ds-\frac{1}{e}\right)^{ \! +}\right\|, \quad
\|h_0-g_0\|=\left\|  \int_{h(\cdot)}^{g(\cdot)} a(s) ds\right\|.
$$
Inequality (\ref{18a}) implies 
$
\frac{ \|(r_0(s)-h_0(s))^+\|}{1-\left\|\frac{b}{a}\right\|}\frac{\left\|1-\frac{b}{a}\right\|}{1- \|h_0-g_0\|}<1.
$
From  (\ref{24}) we have 
\begin{equation}\label{25}
|{y}|_I \leq \frac{M}{1- \frac{\|(r_0-h_0)^+\| 
\left\|1-\frac{b}{a}\right\|}{\left(1-\left\|\frac{b}{a}\right\|\right) (1- \|h_0-g_0\|)}} 
\end{equation}

The right-hand side of (\ref{25}) does not depend on the interval $I$. Hence
for any  bounded on $[s_0,\infty)$ function $f$, the solution of  problem (\ref{20})
is a  bounded on $[s_0,\infty)$ function. 
Then, by Lemma \ref{lemma2} equation (\ref{19}) is uniformly exponentially stable. 

However $y(s)=x(p(t))$,  therefore  (\ref{cond2}) and
Lemma~\ref{lemma_p} yield that equation (\ref{1}) is asymptotically stable.

Since (\ref{18b}), together with boundedness of $a$,  implies (\ref{10abc3}),
by Lemma~\ref{lemma_p}, under (\ref{18b})    
equation (\ref{1}) is also uniformly exponentially stable.
\qed\end{proof}

\begin{corollary}\label{corollary5}
Assume that {\rm(\ref{cond2})} and {\rm(\ref{18abc})} are satisfied  
and for some $t_0\geq 0$ one of the following conditions holds:
\begin{itemize}
\item[\rm 1)] $
\esssup_{t\geq t_0} \int_{h(t)}^{t} a(s) ds \leq \frac{1}{e};
$
\item[\rm 2)] $
\esssup_{t\geq t_0} \int_{h(t)}^{t} a(s) ds >\frac{1}{e}
$ \quad and
$$
\esssup_{t\geq t_0}\int_{h(t)}^{t} a(s) ds<\frac{1-\esssup_{t\geq t_0} 
\frac{b(t)}{a(t)}}{1-\essinf_{t\geq t_0}  \frac{b(t)}{a(t)} }\left(1-\esssup_{t\geq t_0} 
\left|\int_{h(t)}^{g(t)} a(s) ds\right|\right)+\frac{1}{e}  .
$$
\end{itemize}
Then equation {\rm (\ref{1})}
is asymptotically stable. If in addition {\rm(\ref{18b})} holds for some $T>0$, {\rm(\ref{1})}
is uniformly exponentially stable. 
\end{corollary}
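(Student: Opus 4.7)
The plan is to derive Corollary~\ref{corollary5} directly from Theorem~\ref{theorem3} by showing that each of its two alternative hypotheses implies the key inequality~(\ref{18a}). Since conditions (\ref{cond2}) and (\ref{18abc}) are shared with Theorem~\ref{theorem3}, only (\ref{18a}) must be verified in each case.

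The main tool is the elementary identity
\[
\esssup_{t \geq t_0} (f(t)-c)^+ \;=\; \max\bigl\{0,\ \esssup_{t \geq t_0} f(t) - c\bigr\},
\]
applied to $f(t)=\int_{h(t)}^t a(s)\,ds$ with $c=1/e$. In case~1), where $\esssup \int_{h(t)}^t a(s)\,ds \leq 1/e$, this expression vanishes, so the left-hand side of (\ref{18a}) is zero while the right-hand side is strictly positive under (\ref{18abc}). Hence (\ref{18a}) holds trivially, and Theorem~\ref{theorem3} delivers asymptotic stability, together with uniform exponential stability when (\ref{18b}) is imposed.

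For case~2), the same identity turns the left-hand side of (\ref{18a}) into $\esssup \int_{h(t)}^t a(s)\,ds - 1/e$. I would then note that hypothesis (a1) together with the first inequality in (\ref{18abc}) forces $0 \leq b(t)/a(t) \leq 1$ almost everywhere, so $1-b/a \geq 0$ and consequently
\[
\left\|1-\tfrac{b}{a}\right\|_{[t_0,\infty)} \;=\; 1 - \essinf_{t \geq t_0} \frac{b(t)}{a(t)}.
\]
Substituting these two identifications into (\ref{18a}) and transferring $1/e$ to the opposite side produces exactly the inequality stated in case~2) of the corollary, so Theorem~\ref{theorem3} again applies.

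The argument is essentially bookkeeping and I do not expect any genuine analytic obstacle. The most delicate point is the replacement of $\|1-b/a\|$ by $1-\essinf(b/a)$: without the sign information $a \geq b$ supplied by (a1), one would be forced to use $\max\bigl(|1-\essinf(b/a)|,|1-\esssup(b/a)|\bigr)$, which in general is larger and would weaken the conclusion. All other manipulations reduce to identifying which branch of $(\cdot)^+$ is active under each of the two assumptions.
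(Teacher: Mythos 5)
Your proposal is correct and coincides with the paper's approach: the paper proves the analogous Corollary 3.1 from Theorem 3.1 via exactly the identity $\esssup(f-c)^+=\max\{0,\esssup f-c\}$ and leaves Corollary 3.5 to the same two-case bookkeeping, including the identifications $\|b/a\|=\esssup(b/a)$ and $\|1-b/a\|=1-\essinf(b/a)$ that you justify from $0\leq b\leq a$.
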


\begin{corollary}\label{corollary6}
Assume that $g(t)\equiv t$,  {\rm(\ref{cond2})} is satisfied,  for some $t_0\geq 0$ $$\esssup_{t\geq t_0}\frac{b(t)}{a(t)}<1,\quad
\esssup_{t\geq t_0} \int_{h(t)}^t a(s) ds < 1,
$$
and one of the following conditions holds:

\pb
\begin{itemize}
\item[\rm 1)] $
\esssup_{t\geq t_0} \int_{h(t)}^{t} a(s) ds \leq \frac{1}{e} \, ;
$
\item[\rm 2)] $
\esssup_{t\geq t_0}\int_{h(t)}^{t} a(s) ds> \frac{1}{e}
$  \quad and 
$$
\esssup_{t\geq t_0} \int_{h(t)}^{t} a(s) ds<\frac{1-\esssup_{t\geq t_0} 
\frac{b(t)}{a(t)}}{1-\essinf_{t\geq t_0} \frac{b(t)}{a(t)}} \left(1-\esssup_{t\geq t_0}
\int_{h(t)}^{t} a(s) 
ds\right)+\frac{1}{e}.
$$
\end{itemize}
Then equation {\rm(\ref{1})} is asymptotically stable. If in addition {\rm(\ref{18b})} holds for some $T>0$, {\rm(\ref{1}) }
is uniformly exponentially stable. 
\end{corollary}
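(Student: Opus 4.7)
The plan is to derive Corollary~\ref{corollary6} as a direct specialization of Corollary~\ref{corollary5} to the case $g(t)\equiv t$. The key observation is that under this choice
\[
\int_{h(t)}^{g(t)} a(s)\,ds \;=\; \int_{h(t)}^{t} a(s)\,ds \;\geq\; 0,
\]
by $a\geq 0$ (from (a1)) and $h(t)\leq t$ (from (a2)). Hence the absolute value in the hypothesis $\esssup_{t\geq t_0}\bigl|\int_{h(t)}^{g(t)} a(s)\,ds\bigr|<1$ of Corollary~\ref{corollary5} may be dropped, yielding exactly the hypothesis $\esssup_{t\geq t_0}\int_{h(t)}^{t} a(s)\,ds<1$ of Corollary~\ref{corollary6}.

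Next I would translate the two alternatives. Case 1) is identical in both corollaries. In case 2), substituting $g(t)=t$ into
\[
\esssup_{t \geq t_0}\int_{h(t)}^{t} a(s)\,ds < \frac{1-\esssup_{t\geq t_0}(b/a)}{1-\essinf_{t\geq t_0}(b/a)}\left(1-\esssup_{t\geq t_0}\left|\int_{h(t)}^{g(t)} a(s)\,ds\right|\right)+\frac{1}{e},
\]
and using again the sign argument to remove the modulus, produces precisely the inequality displayed in case 2) of Corollary~\ref{corollary6}. The remaining assumptions, namely (\ref{cond2}), $\esssup(b/a)<1$, and the optional tail condition (\ref{18b}), are shared by the two corollaries and transfer verbatim. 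Consequently Corollary~\ref{corollary5} applies and delivers both asymptotic stability of equation (\ref{1}) and, under (\ref{18b}), its uniform exponential stability.

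Since the argument amounts to recognizing Corollary~\ref{corollary6} as the $g(t)\equiv t$ instance of Corollary~\ref{corollary5}, no substantive obstacle arises; the only point requiring a (trivial) verification is the nonnegativity of $\int_{h(t)}^{t} a(s)\,ds$, which removes the absolute value in the Corollary~\ref{corollary5} bound.
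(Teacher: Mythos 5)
Your derivation is correct and matches the paper's (implicit) intent exactly: Corollary~\ref{corollary6} is simply Corollary~\ref{corollary5} specialized to $g(t)\equiv t$, with the absolute value dropped because $a\geq 0$ and $h(t)\leq t$ make $\int_{h(t)}^{t}a(s)\,ds$ nonnegative. Nothing further is needed.
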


Consider now equation (\ref{5}), where (\ref{19a}) holds.

\begin{corollary}\label{corollary7}
Let {\rm(\ref{19a})} be satisfied, for some $t_0\geq 0$
$$ 
\esssup_{t\geq t_0} \left|\int_{h(t)}^{g(t)} r(s) ds\right|<\frac{1}{a},$$ 
and one of the following conditions holds:
\begin{itemize}
\item[\rm 1)] $
a\esssup_{t\geq t_0} \int_{h(t)}^{t} r(s) ds \leq \frac{1}{e}
$;
\item[\rm 2)] $
\frac{1}{e} <  a\esssup_{t\geq t_0}\int_{h(t)}^{t} r(s) ds <  1
$  \quad and 
$$
a\left(\esssup_{t\geq t_0} \int_{h(t)}^{t} r(s) ds+
\esssup_{t\geq t_0} \left|\int_{h(t)}^{g(t)} r(s) ds\right|\right)<1+\frac{1}{e}.
$$
\end{itemize}
Then equation {\rm(\ref{5})}  is asymptotically stable.  
 If in addition there exists $T>0$ such that {\rm(\ref{cond1abcd})} holds
then {\rm(\ref{5})} is uniformly exponentially stable. 
\end{corollary}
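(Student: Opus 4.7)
The plan is to derive Corollary~\ref{corollary7} by specializing Corollary~\ref{corollary5} to equation~(\ref{5}), which is exactly equation~(\ref{1}) upon identifying the coefficient of $x(h(t))$ with $ar(t)$ and the coefficient of $x(g(t))$ with $br(t)$. Thus I treat $a(t):=ar(t)$, $b(t):=br(t)$ in the notation of Corollary~\ref{corollary5} and check each hypothesis.

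First, I verify the standing hypotheses in this setting. Condition (a1) follows from $a>b>0$ together with $r(t)\geq 0$, and (\ref{cond2}) is immediate from (\ref{19a}) since multiplying a nonnegative, non-a.e.-zero, non-integrable function by the constant $a>0$ preserves all three properties. For (\ref{18abc}), the quotient $\frac{br(t)}{ar(t)}\equiv\frac{b}{a}<1$ gives the first inequality, while the second is exactly $a\,\esssup_{t\geq t_0}|\int_{h(t)}^{g(t)}r(s)\,ds|<1$, which is the explicit hypothesis of Corollary~\ref{corollary7}.

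Next I translate the two alternatives of Corollary~\ref{corollary5}. Case 1 is just the rescaling by $a$ of the integral condition on $\int_{h(t)}^{t} a(s)\,ds$. The crucial simplification in case 2 is that the ratio $b(t)/a(t)\equiv b/a$ is constant, so $\esssup\frac{b(t)}{a(t)}=\essinf\frac{b(t)}{a(t)}=\frac{b}{a}$, and the prefactor $\frac{1-\esssup b/a}{1-\essinf b/a}$ in Corollary~\ref{corollary5}(2) collapses to $1$. Substituting $a(t)=ar(t)$ on both sides then yields
\[
a\,\esssup_{t\geq t_0}\int_{h(t)}^{t}\!r(s)\,ds \;+\; a\,\esssup_{t\geq t_0}\Bigl|\int_{h(t)}^{g(t)}\!r(s)\,ds\Bigr|\;<\;1+\tfrac{1}{e},
\]
which is exactly the second inequality in case~2) of Corollary~\ref{corollary7}. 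The auxiliary bound $a\,\esssup\int_{h(t)}^{t}r(s)\,ds<1$ included in case~2) guarantees that the right-hand side of (\ref{18a}) remains positive after the specialization, so that case~2) is a genuine application of Theorem~\ref{theorem3} rather than a vacuous one.

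Finally, under the extra hypothesis (\ref{cond1abcd}), the identity $\int_{t}^{t+T}ar(s)\,ds=a\int_{t}^{t+T}r(s)\,ds$ together with $a>0$ gives (\ref{18b}), so Corollary~\ref{corollary5} upgrades asymptotic stability to uniform exponential stability. There is essentially no obstacle in this argument; the only point that requires care is the collapse of $\frac{1-\|b/a\|}{\|1-b/a\|}$ to $1$, which is the specific feature of the constant-ratio case that allows the sharper form of the inequality to be written out.
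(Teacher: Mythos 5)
Your derivation is correct and is essentially the paper's own route: Corollary~\ref{corollary7} is intended as a direct specialization of Corollary~\ref{corollary5} (equivalently, of Theorem~\ref{theorem3}) to $a(t)=ar(t)$, $b(t)=br(t)$, with the constant ratio $b(t)/a(t)\equiv b/a$ collapsing the prefactor $\frac{1-\esssup b/a}{1-\essinf b/a}$ to $1$, exactly as you observe. One minor correction: the positivity of the right-hand side of (\ref{18a}) is already secured by the hypothesis $\esssup_{t\geq t_0}\bigl|\int_{h(t)}^{g(t)}r(s)\,ds\bigr|<\frac{1}{a}$, which is the second inequality of (\ref{18abc}) in this setting, so the extra bound $a\,\esssup_{t\geq t_0}\int_{h(t)}^{t}r(s)\,ds<1$ in case~2) is not what keeps the application non-vacuous --- it plays no role in the deduction and is simply an additional conservative assumption in the statement.
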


\begin{corollary}\label{corollary8}
Assume that for some $t_0\geq 0$ one of the following conditions holds:
\begin{itemize}
\item[\rm 1)] $ \esssup_{t\geq t_0} \int_{h(t)}^{t} r(s) ds \leq \frac{1}{ae}
$;
\item[\rm 2)] $ \frac{1}{ae}< \esssup_{t\geq t_0} \int_{h(t)}^{t} r(s) ds<\frac{1}{2a} \left( 1+\frac{1}{e} 
\right).$
\end{itemize}
Then equation {\rm(\ref{4})} is asymptotically stable.  
 If in addition there exists $T>0$ such that {\rm(\ref{cond1abcd})} holds
then {\rm(\ref{4})} is uniformly exponentially stable. 
\end{corollary}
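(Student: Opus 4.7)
The plan is to derive this corollary as the direct specialization of Corollary~\ref{corollary7} to the case $g(t)\equiv t$, noting that with this choice equation~(\ref{5}) collapses to equation~(\ref{4}), and the integral $\int_{h(t)}^{g(t)} r(s)\,ds$ appearing in Corollary~\ref{corollary7} simply equals $\int_{h(t)}^{t} r(s)\,ds$. Write $R := \esssup_{t\geq t_0}\int_{h(t)}^t r(s)\,ds$ throughout, so both the ``$\int_h^t$'' and ``$|\int_h^g|$'' quantities in Corollary~\ref{corollary7} reduce to $R$.

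First I would check Case 1. The hypothesis $R\leq \frac{1}{ae}$ immediately gives $aR\leq \frac{1}{e}$, which is condition 1) of Corollary~\ref{corollary7}. One still needs to verify the side condition $\esssup|\int_{h(t)}^{g(t)} r\,ds|<\frac{1}{a}$ of Corollary~\ref{corollary7}, which here reads $R<\frac{1}{a}$, but this is automatic since $\frac{1}{ae}<\frac{1}{a}$. Hence Corollary~\ref{corollary7} applies and yields asymptotic stability; uniform exponential stability follows whenever (\ref{cond1abcd}) is imposed.

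Next I would check Case 2. The lower bound $R>\frac{1}{ae}$ translates into $aR>\frac{1}{e}$. The upper bound $R<\frac{1}{2a}\!\left(1+\frac{1}{e}\right)$ gives $2aR<1+\frac{1}{e}$, which is exactly the inequality $a\bigl(\int_h^t r + |\int_h^g r|\bigr)<1+\frac{1}{e}$ of Corollary~\ref{corollary7}, case 2) when $g\equiv t$. The remaining requirement $aR<1$ (and the side condition $R<\frac{1}{a}$) follows immediately from $1+\frac{1}{e}<2$, so $R<\frac{1}{2a}(1+\frac{1}{e})<\frac{1}{a}$. Thus the second alternative of Corollary~\ref{corollary7} is fulfilled and the same conclusion is obtained.

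I do not expect a substantial obstacle here: the work is purely book-keeping, since Corollary~\ref{corollary7} already contains all analytic content. The only mild subtlety is making sure that the side condition $R<1/a$ is not forgotten in Case~1 and that the standing assumption~(\ref{19a}) is understood to carry over from Corollary~\ref{corollary7} (as is standard in the section). Once those are in place, the corollary is a one-line substitution $g(t)\equiv t$.
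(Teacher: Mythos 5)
Your derivation is correct and is exactly the route the paper intends: Corollary~\ref{corollary8} is obtained from Corollary~\ref{corollary7} by the substitution $g(t)\equiv t$, under which equation~(\ref{5}) becomes~(\ref{4}) and both integrals in Corollary~\ref{corollary7} coincide with $R=\esssup_{t\geq t_0}\int_{h(t)}^t r(s)\,ds$, with the side condition $R<1/a$ following from $R\leq\frac{1}{ae}$ in case 1) and from $2aR<1+\frac{1}{e}<2$ in case 2). The paper states this corollary without a written proof precisely because it is this one-line specialization, so your bookkeeping matches the intended argument.
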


Next, let us compare Theorems \ref{theorem2} and \ref{theorem3}.

\begin{example}\label{example2}

Consider the equation
\begin{equation}\label{26}
\dot{x}(t)+x(t-1)-0.3x(t)=0,
\end{equation}
which is (\ref{4}) with $a=1,$ $ b=0.3,$ $ r(t)\equiv 1,$ $ h(t)=t-1,$ $ g(t)\equiv t$.
Condition 2)  of Corollary \ref{corollary4} holds, hence  equation (\ref{26}) is uniformly exponentially stable. 
Conditions of Corollary \ref{corollary8} are not satisfied.

Consider the equation
\begin{equation}\label{27}
\dot{x}(t)+0.4x(t-1)-0.35x(t-3)=0
\end{equation}
which is (\ref{5}) with $a=0.4,$ $ b=0.35,$ $ r(t)\equiv 1,$ $ h(t)=t \!- \!1,$ $ g(t)=t \!- \!3.$
Con- \lb
dition 2) of Corollary \ref{corollary7} holds, hence  equation (\ref{27}) is uniformly exponentially stable. 
Conditions of Corollary \ref{corollary3} are not satisfied.

Hence Theorems \ref{theorem2} and \ref{theorem3} are independent.
\end{example}

\begin{example}\label{example2a}
Consider the equation
\begin{equation}\label{3abc}
\dot{x}(t)+\sin^2 t \, (a x(t-2)-bx(t))=0.
\end{equation}
For $a=0.6$ and $b<0.34$, Corollary~\ref{corollary4} implies uniform exponential stability. For this $a$,
Corollary~\ref{corollary8} fails to establish stability of \eqref{3abc}. However, Corollary~\ref{corollary8}
can be applied for 
$$a< \frac{1}{1.4546}\cdot \frac{1}{2} \left( 1+ \frac{1}{e}  \right) \approx 0.47$$
and any $b<a$, and for these values of $a$, Corollary~\ref{corollary4} also implies uniform exponential stability.
\end{example}

\section{Some generalizations}

In this section we consider differential equations with several delays,
integro-differential equations and equations with a distributed delay. 
We will only present generalizations of Theorems~\ref{theorem1}--\ref{theorem3}
to these equations. 
All the corollaries of the generalized theorems can be obtained similarly to the corollaries of 
Theorems~\ref{theorem2} and~\ref{theorem3}.

\subsection{Equations with Several Delays}

Consider an equation with several delays and positive and negative coefficients
\begin{equation}\label{28}
\dot{x}(t)+\sum_{k=1}^m a_k(t)x(h_k(t))-\sum_{k=1}^l b_k(t)x(g_k(t))=0,
\end{equation}
where for the parameters of equation~(\ref{28}) the following conditions hold: 
\begin{itemize}
\item[(b1)] $a_k$ and $b_k$ are Lebesgue measurable essentially bounded functions on \lb
$[0,\infty)$, $a_k(t)\geq 0$, 
$b_k(t)\geq 0$, 
$ \sum_{k=1}^m a_k(t)\geq \sum_{k=1}^l b_k(t)$;
\item[(b2)] the functions $h_k$ and $g_k$ are Lebesgue measurable  on $[0,\infty)$, and 
$0\leq 
t-h_k(t)\leq \tau$, $0\leq t-g_k(t)\leq \delta$  for some finite constants $\tau$ and  $\delta$.
\end{itemize}
Denote
\begin{equation}
\label{line_1}
a(t) \!= \!\sum_{k=1}^m \! a_k(t),\quad b(t) \!= \!\sum_{k=1}^l \! b_k(t),\quad
 h(t) \!= \!\min_k
h_k(t), \quad H(t) \!= \!\max_k h_k(t),
\end{equation}
and
\begin{equation}
\label{line_2}
\begin{alignedat}{2}
g(t)&=\min_k g_k(t),&\quad G(t)&=\max_k g_k(t),\\
 r(t)&=\min\{h(t),g(t)\},&
R(t)&=\max\{H(t),G(t)\}.
\end{alignedat}
\end{equation}
Then  (b1), (b2) imply that  (a1), (a2) hold for $a,b,h,g, H,G$.

\begin{theorem}\label{theorem4}
Assume that {\rm(\ref{cond1})} holds 
and for some $t_0\geq 0$
\[
\left\| \!\left(  \int_{h(\cdot)}^{ \, \cdot}  \! \! 
[a(s) \!- \!b(s)] ds \!- \!\frac{1}{e} \!\right)^{ \! \!+}\right\|_{[t_0,\infty)} \! \!
+ 2 \left\| \!\frac{b}{a \!- \!b}\right\|_{[t_0,\infty)} \!\left\|   
\int_{r(\cdot)}^{R(\cdot)} \! \! \! [a(s) \!- \!b(s)] ds
\right\|_{[t_0,\infty)} \! \! \!
< \!1,
\]
where $a,b,h,r,R$ are defined in {\rm(\ref{line_1})} and {\rm(\ref{line_2})}.
Then equation {\rm(\ref{28})} is asymptotically stable. If in addition {\rm(\ref{cond1abc})} holds, {\rm(\ref{28})}
is uniformly exponentially stable.
\end{theorem}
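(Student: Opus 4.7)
The plan is to reduce equation (28) to the two-delay equation (1) via two applications of Lemma~\ref{lemma5} and then invoke Theorem~\ref{theorem2}. Concretely, given any continuous solution $x$ of (28), Lemma~\ref{lemma5} applied to the positive terms produces a measurable function $h_0$ with $h(t)\leq h_0(t)\leq H(t)\leq t$ such that $\sum_{k=1}^m a_k(t) x(h_k(t))=a(t)\,x(h_0(t))$, and applied to the negative terms produces $g_0$ with $g(t)\leq g_0(t)\leq G(t)\leq t$ such that $\sum_{k=1}^l b_k(t) x(g_k(t))=b(t)\,x(g_0(t))$. Thus $x$ satisfies
$$\dot{x}(t)+a(t)\,x(h_0(t))-b(t)\,x(g_0(t))=0,$$
which is precisely an instance of (\ref{1}). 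Conditions (b1), (b2) hand us the hypotheses (a1), (a2) for this reduced equation with delay bounds $t-h_0(t)\leq\tau$ and $t-g_0(t)\leq\delta$.

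Next I would verify that the hypothesis of Theorem~\ref{theorem2} for the reduced equation follows from that of Theorem~\ref{theorem4}. Since $a-b\geq 0$ and $h(t)\leq h_0(t)$,
$$\int_{h_0(t)}^{t}[a(s)-b(s)]\,ds\leq\int_{h(t)}^{t}[a(s)-b(s)]\,ds,$$
so the first norm in (\ref{star}) is bounded by its analogue with $h_0$ replaced by $h$. Moreover, since $h_0(t),g_0(t)\in[r(t),R(t)]$ and the integrand is nonnegative,
$$\left|\int_{h_0(t)}^{g_0(t)}[a(s)-b(s)]\,ds\right|\leq\int_{r(t)}^{R(t)}[a(s)-b(s)]\,ds,$$
so the second norm in (\ref{star}) is dominated by its analogue with $h(\cdot),g(\cdot)$ replaced by $r(\cdot),R(\cdot)$. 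Combining these two estimates shows that (\ref{star}) for the reduced equation is implied by the hypothesis of Theorem~\ref{theorem4}. The remaining hypotheses (\ref{cond1}) and, when applicable, (\ref{cond1abc}) are explicitly assumed. Theorem~\ref{theorem2} then yields asymptotic stability, and uniform exponential stability under (\ref{cond1abc}).

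The main subtlety I expect is that the functions $h_0,g_0$ supplied by Lemma~\ref{lemma5} depend on the particular continuous $x$ under consideration. This is not a problem for invoking Theorem~\ref{theorem2}: its proof proceeds through the Bohl--Perron principle (Lemma~\ref{lemma2}), and the key a priori estimates on $|y|_I$ and $|\dot y|_I$ obtained in that proof depend on $h_0,g_0$ only through the integral quantities just bounded above, and on the essential bounds of $a,b$. Hence the same constant $\alpha<1$ and bound $M_2$ work simultaneously for every admissible pair $(h_0,g_0)$, so boundedness of solutions for every essentially bounded right-hand side is preserved under the reduction, and Lemma~\ref{lemma2} (together with Remark~\ref{remark1} for several delays) transfers back to equation (\ref{28}). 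Thus the reduction is legitimate and delivers the desired conclusion.
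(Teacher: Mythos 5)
Your proposal is correct and follows essentially the same route as the paper: apply Lemma~\ref{lemma5} twice to collapse the positive and negative sums into single delayed terms $a(t)x(h_0(t))$ and $b(t)x(g_0(t))$, check that the hypothesis of Theorem~\ref{theorem4} dominates condition (\ref{star}) for the reduced two-delay equation via the inequalities $\bigl(\int_{h_0(t)}^{t}[a-b]\,ds-\frac1e\bigr)^+\leq\bigl(\int_{h(t)}^{t}[a-b]\,ds-\frac1e\bigr)^+$ and $\bigl|\int_{h_0(t)}^{g_0(t)}[a-b]\,ds\bigr|\leq\int_{r(t)}^{R(t)}[a-b]\,ds$, and then conclude through Theorem~\ref{theorem2}, the Bohl--Perron principle (Lemma~\ref{lemma2} with Remark~\ref{remark1}) and Lemma~\ref{lemma_p}. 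Your explicit remark that the constants in the proof of Theorem~\ref{theorem2} depend on $h_0,g_0$ only through the uniformly bounded integral quantities is a point the paper leaves implicit, and it is the right justification for the solution-dependent reduction.
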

\begin{proof}
Suppose that $x(t)$, $t\geq t_0$ is a solution of the initial value problem
\begin{equation}\label{29}
\begin{alignedat}{2}
\dot{x}(t)+\sum_{k=1}^m a_k(t)x(h_k(t))-\sum_{k=1}^l
b_k(t)x(g_k(t))&=f(t),&\quad
 t&\geq t_0,\\
x(t)&=0,& t&\leq t_0,
\end{alignedat}
\end{equation}
where $f$ is an essentially bounded function on $[t_0,\infty)$. 
By Lemma~\ref{lemma5} there exist the delayed arguments $h_0(t)\leq t$, $h(t)\leq h_0(t)\leq H(t)$, and $g_0(t)\leq t$, $g(t)\leq 
g_0(t)\leq G(t)$ such that
$$
\sum_{k=1}^m a_k(t)x(h_k(t))=a(t)x(h_0(t)), \quad 
 \sum_{k=1}^l b_k(t)x(g_k(t))=b(t)x(g_0(t)),
$$
therefore 
$
\dot{x}(t)+a(t)x(h_0(t))-b(t)x(g_0(t))=f(t).
$
Consider now the delay differential equation
\begin{equation}\label{30}
\dot{y}(t)+a(t)y(h_0(t))-b(t)y(g_0(t))=0.
\end{equation}
We have
\begin{align*}
\left(\int_{h_0(t)}^t [a(s)-b(s)] ds-\frac{1}{e}\right)^{ \! +}&\leq \left(\int_{h(t)}^t [a(s)-b(s)] ds-\frac{1}{e}\right)^{ \! +},\\
\left| \int_{h_0(t)}^{g_0(t)} [a(s)-b(s)] ds \right|&\leq  \int_{r(t)}^{R(t)} [a(s)-b(s)] ds .
\end{align*}
By Theorem~\ref{theorem2}, equation (\ref{30}) is uniformly exponentially stable. 
Hence by Lemma~\ref{lemma1}, the function $x$ which is a solution of a uniformly exponentially stable equation with an 
essentially bounded right-hand side $f$, is 
also an essentially bounded function. 
Thus for any essentially bounded $f$, the solution 
of problem (\ref{29}) is an essentially bounded function. 
Lemma~\ref{lemma2}, Remark~\ref{remark1} and Lemma~\ref{lemma_p}
imply the statement of the theorem.
\qed\end{proof}

\begin{theorem}\label{theorem5}
Assume that  for some $t\geq t_0$ $a(t)\geq a_0>0$, $ a(t)\geq \sum_{k=1}^l b_k(t)$, 
\begin{equation}\label{29b}
\esssup_{t\geq t_0} \left(\frac{1}{a(t)} \sum_{k=1}^l b_k(t) \right) < 1.
\end{equation}
Then the equation 
$$
\dot{x}(t)+a(t)x(t)-\sum_{k=1}^l b_k(t)x(g_k(t))=0
$$
is uniformly exponentially stable.
\end{theorem}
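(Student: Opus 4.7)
The plan is to reduce the statement to Theorem~\ref{theorem1} by collapsing the several negative delayed terms into a single one, exactly as was done for the positive side in the proof of Theorem~\ref{theorem4}. The only substantive issue is that Lemma~\ref{lemma5} applies to a continuous function, so the reduction has to be carried out on the solution of an inhomogeneous initial value problem, after which the Bohl--Perron principle finishes the argument.

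First, fix an arbitrary essentially bounded $f$ on $[t_0,\infty)$ and consider the initial value problem
\begin{equation*}
\dot{x}(t)+a(t)x(t)-\sum_{k=1}^l b_k(t)x(g_k(t))=f(t),\quad t\geq t_0; \qquad x(t)=0,\quad t\leq t_0.
\end{equation*}
Since the solution $x$ exists, is unique and locally absolutely continuous on $[t_0,\infty)$ by Lemma~\ref{lemma1} (extended as in Remark~\ref{remark1}), it is continuous. Apply Lemma~\ref{lemma5} to the continuous function $x$ and the non\-negative weights $b_k$: there is a measurable $g_0$ with $\min_k g_k(t)\leq g_0(t)\leq \max_k g_k(t)\leq t$ such that
\begin{equation*}
\sum_{k=1}^l b_k(t)\,x(g_k(t)) = b(t)\,x(g_0(t)),\quad \text{where } b(t):=\sum_{k=1}^l b_k(t).
\end{equation*}
Thus $x$ solves
\begin{equation*}
\dot{x}(t)+a(t)x(t)-b(t)x(g_0(t))=f(t),\quad t\geq t_0;\qquad x(t)=0,\quad t\leq t_0.
\end{equation*}

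Next, consider the associated homogeneous equation
\begin{equation*}
\dot{y}(t)+a(t)y(t)-b(t)y(g_0(t))=0.
\end{equation*}
By hypothesis, $a(t)\geq a_0>0$, and
\begin{equation*}
\limsup_{t\to\infty}\frac{b(t)}{a(t)}\leq \esssup_{t\geq t_0}\left(\frac{1}{a(t)}\sum_{k=1}^l b_k(t)\right)<1.
\end{equation*}
Hence Theorem~\ref{theorem1} (with $h(t)\equiv t$) applies and yields that this single-delay equation is uniformly exponentially stable; in particular, its fundamental function admits an exponential estimate.

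Finally, by Lemma~\ref{lemma1} the solution $x$ of the inhomogeneous single-delay equation can be written as a convolution of this exponentially decaying fundamental function with the essentially bounded right-hand side $f$, so $x$ is essentially bounded on $[t_0,\infty)$. Because $f$ was arbitrary, Lemma~\ref{lemma2} together with Remark~\ref{remark1} implies that the original multi-delay equation is uniformly exponentially stable. The only delicate point worth double-checking is that Lemma~\ref{lemma5} produces a \emph{measurable} $g_0$ so that the reduced equation falls within the framework of Lemma~\ref{lemma1}; this is precisely the conclusion of Lemma~\ref{lemma5}, so no additional work is needed.
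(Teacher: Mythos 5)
Your proposal is correct and follows essentially the same route as the paper: collapse the negative delayed terms via Lemma~\ref{lemma5} applied to the solution of the inhomogeneous problem with zero initial function, invoke Theorem~\ref{theorem1} (with $h(t)\equiv t$) for the resulting single-delay equation, and conclude boundedness of $x$ and hence uniform exponential stability via Lemma~\ref{lemma1}, Lemma~\ref{lemma2} and Remark~\ref{remark1}. Your explicit attention to the measurability of $g_0$ is a reasonable extra check but adds nothing beyond what the paper's argument already relies on.
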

\begin{proof} In this and the next theorem, we follow the scheme of the proof of Theorem~\ref{theorem4}.

Suppose $x(t)$, $t\geq t_0$ is a solution of the initial value problem
\begin{equation*}
\begin{alignedat}{2}
\dot{x}(t)+a(t)x(t)-\sum_{k=1}^l b_k(t)x(g_k(t))&=f(t),&\quad t&\geq
t_0,\\
x(t)&=0,& t&\leq t_0,
\end{alignedat}
\end{equation*}
where $f$ is an essentially bounded function on $[t_0,\infty)$. 
By Lemma~\ref{lemma5}, there exists the delayed argument  $g_0(t)\leq t$, $g(t)\leq 
g_0(t)\leq G(t)$ such that
$$
 \sum_{k=1}^l b_k(t)x(g_k(t))=b(t)x(g_0(t)),
$$
therefore 
$
\dot{x}(t)+a(t)x(t)-b(t)x(g_0(t))=f(t).
$
Consider now the delay differential equation
\begin{equation}\label{30a}
\dot{y}(t)+a(t)y(t)-b(t)y(g_0(t))=0.
\end{equation}
Inequality (\ref{29b}) and Theorem~\ref{theorem1} imply that equation (\ref{30a}) is uniformly exponentially stable. 
The proof is concluded similarly to the end of the proof of Theorem~\ref{theorem4}.
\qed\end{proof}

\pb

\begin{theorem}\label{theorem6}
Assume that condition {\rm(\ref{cond2})} holds, for some $t_0\geq 0$ 
\begin{equation}\label{30b}
\esssup_{t\geq t_0}\frac{b(t)}{a(t)}<1,\quad \esssup_{t\geq t_0}\int_{r(t)}^{R(t)} a(s) ds<1,
\end{equation}
and for some $t_0\geq 0$ we have
\begin{equation}\label{30c}
\left\|\left(  \int_{h(\cdot)}^{ \, \cdot} a(s) ds-\frac{1}{e}\right)^{ \! +}\right\|_{[t_0,\infty)}
\hspace*{-10pt}<\frac{1-\left\|\frac{b}{a}\right\|_{[t_0,\infty)}}{\left\|1-\frac{b}{a}\right\|_{[t_0,\infty)}}
\left(1-\left\|  \int_{r(\cdot)}^{R(\cdot)} a(s) ds\right\|_{[t_0,\infty)}\right),
\end{equation}
where $a,b,h,r,R$ are defined in {\rm(\ref{line_1})} and {\rm(\ref{line_2})}.
Then equation {\rm(\ref{28})} is asymptotically stable. If in addition {\rm(\ref{18b})} holds,  equation {\rm(\ref{28})}
is uniformly exponentially stable.
\end{theorem}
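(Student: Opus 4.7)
The plan is to follow exactly the same reduction scheme used in the proof of Theorem~\ref{theorem4}, but invoking Theorem~\ref{theorem3} in place of Theorem~\ref{theorem2} at the crucial step. First I would consider the forced initial value problem
\begin{equation*}
\dot{x}(t)+\sum_{k=1}^m a_k(t)x(h_k(t))-\sum_{k=1}^l b_k(t)x(g_k(t))=f(t),\ t\geq t_0;\quad x(t)=0,\ t\leq t_0,
\end{equation*}
with $f$ essentially bounded on $[t_0,\infty)$ (and vanishing on some initial segment, as allowed by Remark~\ref{remark2}). Applying Lemma~\ref{lemma5} separately to the positive and the negative sums, I obtain measurable functions $h_0(t)$ and $g_0(t)$ with $h(t)\le h_0(t)\le H(t)$ and $g(t)\le g_0(t)\le G(t)$ such that the problem reduces to
\begin{equation*}
\dot{x}(t)+a(t)x(h_0(t))-b(t)x(g_0(t))=f(t),
\end{equation*}
where $a$ and $b$ are as in (\ref{line_1}).

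The next step is to check that the collapsed two-delay equation
\begin{equation*}
\dot{y}(t)+a(t)y(h_0(t))-b(t)y(g_0(t))=0
\end{equation*}
satisfies the hypotheses of Theorem~\ref{theorem3}. Condition (\ref{cond2}) is inherited directly, and the pointwise bound $b(t)/a(t)<1$ coming from (\ref{30b}) is also preserved. The two remaining bounds are obtained from the sandwiching inequalities $h(t)\le h_0(t)$, $r(t)\le \min\{h_0(t),g_0(t)\}$, $R(t)\ge \max\{h_0(t),g_0(t)\}$ together with $a\ge 0$, which yield
\begin{equation*}
\left(\int_{h_0(t)}^{t} a(s)\,ds-\tfrac{1}{e}\right)^{\!+}\le \left(\int_{h(t)}^{t} a(s)\,ds-\tfrac{1}{e}\right)^{\!+},\qquad
\left|\int_{h_0(t)}^{g_0(t)} a(s)\,ds\right|\le \int_{r(t)}^{R(t)} a(s)\,ds.
\end{equation*}
Combined with (\ref{30b})--(\ref{30c}), these give exactly the hypotheses (\ref{18abc}) and (\ref{18a}) of Theorem~\ref{theorem3} for the reduced equation, whose right-hand side involves $h_0$ and $g_0$ in place of $h$ and $g$.

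Consequently Theorem~\ref{theorem3} applies to yield uniform exponential stability of the collapsed equation (assuming (\ref{18b}); otherwise only asymptotic stability). By Lemma~\ref{lemma1} applied to the forced problem for the collapsed equation, the essentially bounded $f$ produces an essentially bounded solution $x(t)$ on $[t_0,\infty)$. Since $x$ solves the original forced problem (\ref{29}) for the multi-delay equation, Lemma~\ref{lemma2} together with Remark~\ref{remark1} gives uniform exponential stability of (\ref{28}) in the case~(\ref{18b}); in the general asymptotic case, Lemma~\ref{lemma_p} (via the same time-change $p(t)=\int_{t_0}^t a(\zeta)\,d\zeta$ used in the proof of Theorem~\ref{theorem3}) yields asymptotic stability.

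The only subtle point, and what I expect to be the main obstacle, is the verification that the Lemma~\ref{lemma5} reduction genuinely preserves the estimates (\ref{18a})--(\ref{30c}); the key observation is that although $h_0$ and $g_0$ are only known to lie in $[h,H]$ and $[g,G]$ respectively, the monotonicity of the integrals $\int a$ in the endpoints, together with the definitions $r=\min\{h,g\}$ and $R=\max\{H,G\}$, makes both the ``truncated tail'' $\left(\int_{h_0(\cdot)}^{\cdot}a-\tfrac{1}{e}\right)^{+}$ and the ``gap'' $\left|\int_{h_0(\cdot)}^{g_0(\cdot)}a\right|$ dominated by their counterparts in the hypothesis. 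Once these dominations are in place, the rest of the argument is a mechanical invocation of Theorem~\ref{theorem3}, Lemma~\ref{lemma2}, and Lemma~\ref{lemma_p}, mirroring the conclusion of Theorem~\ref{theorem4}.
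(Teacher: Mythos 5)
Your proposal is correct and follows essentially the same route as the paper's own proof: collapse the sums via Lemma~\ref{lemma5}, verify that the hypotheses of Theorem~\ref{theorem3} for the reduced two-delay equation are dominated by (\ref{30b})--(\ref{30c}) using $h\le h_0\le H$, $g\le g_0\le G$, and conclude via Lemma~\ref{lemma1}, Lemma~\ref{lemma2}, Remark~\ref{remark1} and Lemma~\ref{lemma_p}. The ``subtle point'' you flag about preserving the estimates is handled exactly as you describe, so nothing is missing.
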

\begin{proof}
Suppose $x(t)$, $t\geq t_0$ is a solution of the initial value problem (\ref{29}),
where $f$ is an essentially bounded function on $[t_0,\infty)$. 
By Lemma~\ref{lemma5}, there exist the delayed arguments $h_0(t)\leq t$, $h(t)\leq h_0(t)\leq H(t)$ and $g_0(t)\leq t$, $g(t)\leq 
g_0(t)\leq G(t)$ such that
$$
\sum_{k=1}^m a_k(t)x(h_k(t))=a(t)x(h_0(t)), \quad 
 \sum_{k=1}^l b_k(t)x(g_k(t))=b(t)x(g_0(t)),
$$
therefore 
$
\dot{x}(t)+a(t)x(h_0(t))-b(t)x(g_0(t))=f(t).
$
Consider now  equation (\ref{30}).
We have
\begin{align*}
\left(\int_{h_0(t)}^t a(s) ds-\frac{1}{e}\right)^{ \! +}&\leq \left(\int_{h(t)}^t a(s) ds-\frac{1}{e}\right)^{ \! +},\\
\left| \int_{h_0(t)}^{g_0(t)} a(s) ds \right|&\leq  \int_{r(t)}^{R(t)} a(s) ds.
\end{align*}
By (\ref{30b}), (\ref{30c}) and Theorem~\ref{theorem3}, equation (\ref{30}) is uniformly exponentially stable.
The rest of the proof is the same as in the proof of Theorem~\ref{theorem4}.
\qed\end{proof}

\subsection{Equations with Distributed Delays}

Consider the equation with distributed delays
\begin{equation}\label{33}
\dot{x}(t)+ a(t) \int_{h(t)}^t x(s)d_s A(t,s) -b(t)\int_{g(t)}^t x(s) d_s B(t,s)=0,
\end{equation}
where $a$, $b$, $h$, $g$ satisfy (a1), (a2), $A(t,s), B(t,s)$ are measurable on $[0,\infty)\times [0,\infty)$,
$A(t, \cdot)$ and $B(t, \cdot)$ are left continuous non-decreasing functions
for almost all  $t$, $A(\cdot,s)$ and $B(\cdot,s)$  are locally integrable for
any $s$, $A(t,h(t)) \!= \!B(t,g(t)) \!= \!0,$ and $A(t,t^+)=B(t,t^+)=1$.
Then 
$$
\int_{h(t)}^t d_s A(t,s)=\int_{g(t)}^t d_s B(t,s)=1.
$$
Denote 
\begin{equation*}
u(t)=\min\{h(t),g(t)\}, \quad U(t)=\max\{h(t),g(t)\}.
\end{equation*}

\begin{theorem}
\label{theorem10}
Assume that condition  {\rm(\ref{cond1})} holds 
and for some $t_0\geq 0$
\[
\left\| \!\left(  \int_{h(\cdot)}^{ \, \cdot}  \! \! 
[a(s) \!- \!b(s)] ds \!- \!\frac{1}{e} \!\right)^{ \! \!+}\right\|_{[t_0,\infty)} \! \!
+ 2 \left\| \!\frac{b}{a \!- \!b}\right\|_{[t_0,\infty)} \!\left\|   
\int_{u(\cdot)}^{U(\cdot)} \! \! \! [a(s) \!- \!b(s)] ds
\right\|_{[t_0,\infty)} \! \! \!
< \!1,
\]
Then equation {\rm(\ref{33})} is asymptotically stable. If in addition {\rm(\ref{cond1abc})} holds, 
{\rm(\ref{33})} is uniformly exponentially stable.
\end{theorem}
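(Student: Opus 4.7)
The plan is to mirror the proof of Theorem~\ref{theorem4}, replacing the role of Lemma~\ref{lemma5} (which handled finite sums of delayed terms) with Lemma~\ref{lemma7} (which handles Stieltjes integrals). The distributed integrals in (\ref{33}) will be collapsed to point evaluations at measurable delay arguments lying between the minimal and maximal supports of the measures $A(t,\cdot)$ and $B(t,\cdot)$; then Theorem~\ref{theorem2} applied to the resulting equation of type (\ref{1}), combined with the Bohl--Perron principle (Lemma~\ref{lemma2} with Remark~\ref{remark1}) and the change-of-variable Lemma~\ref{lemma_p}, will deliver the conclusion.

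First, I would consider the initial value problem associated with (\ref{33}), with zero initial function and essentially bounded right-hand side $f$ vanishing on $[t_0,t_1)$, and let $x$ be its (necessarily continuous) solution. Since $A(t,\cdot)$ and $B(t,\cdot)$ are non-decreasing and normalized so that $\int_{h(t)}^t d_s A(t,s)=\int_{g(t)}^t d_s B(t,s)=1$, Lemma~\ref{lemma7} applied to $x$ produces measurable functions $h_0, g_0$ with $h(t)\leq h_0(t)\leq t$ and $g(t)\leq g_0(t)\leq t$ such that
$$\int_{h(t)}^t x(s)\, d_s A(t,s)=x(h_0(t)),\qquad \int_{g(t)}^t x(s)\, d_s B(t,s)=x(g_0(t)).$$
Substituting, the particular solution $x$ satisfies the concentrated-delay equation
$$\dot x(t)+a(t)x(h_0(t))-b(t)x(g_0(t))=f(t),\quad t\geq t_0.$$

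Next I would transfer the hypothesis to this reduced equation. Since $h_0(t),g_0(t)\in[u(t),U(t)]$ and $h_0(t)\geq h(t)$, one has
$$\left(\int_{h_0(\cdot)}^{\,\cdot}[a(s)-b(s)]\,ds-\tfrac{1}{e}\right)^{\!+}\!\!\leq\left(\int_{h(\cdot)}^{\,\cdot}[a(s)-b(s)]\,ds-\tfrac{1}{e}\right)^{\!+}$$
and
$$\left|\int_{h_0(\cdot)}^{g_0(\cdot)}[a(s)-b(s)]\,ds\right|\leq \int_{u(\cdot)}^{U(\cdot)}[a(s)-b(s)]\,ds,$$
pointwise a.e. Taking the $[t_0,\infty)$-essential suprema, the hypothesis of Theorem~\ref{theorem10} implies the hypothesis of Theorem~\ref{theorem2} for the reduced equation. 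Hence the homogeneous reduced equation is uniformly exponentially stable, its fundamental function admits an exponential bound, and therefore $x$, a solution of the corresponding inhomogeneous equation with bounded $f$, is bounded on $[t_0,\infty)$. Lemma~\ref{lemma2} with Remark~\ref{remark1} (for distributed-delay equations) together with Lemma~\ref{lemma_p}, invoked as in the proofs of Theorems~\ref{theorem2} and~\ref{theorem4} under (\ref{cond1}) and, additionally, (\ref{cond1abc}), then yield asymptotic and uniform exponential stability of (\ref{33}).

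The one subtle point is that the delays $h_0, g_0$ produced by Lemma~\ref{lemma7} depend on the particular solution $x$, so the "reduced equation" is not intrinsic to (\ref{33}); I expect this to be the main conceptual obstacle. It is resolved exactly as in Theorem~\ref{theorem4}: for \emph{any} measurable selections satisfying $h(t)\leq h_0(t)\leq t$ and $g(t)\leq g_0(t)\leq t$, the estimates above hold, so Theorem~\ref{theorem2} yields an exponential bound for the fundamental function that depends only on the quantities appearing in the hypothesis of Theorem~\ref{theorem10}, and not on the specific selection. This uniformity is what makes the Bohl--Perron argument go through.
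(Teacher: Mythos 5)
Your proposal follows essentially the same route as the paper's own proof: apply Lemma~\ref{lemma7} to collapse the distributed terms to concentrated delays $h_0,g_0$ lying in $[h(t),t]$ and $[g(t),t]$, verify that the hypothesis transfers to the reduced equation of type (\ref{1}) so that Theorem~\ref{theorem2} applies, and conclude via Lemma~\ref{lemma2}, Remark~\ref{remark1} and Lemma~\ref{lemma_p}. Your explicit handling of the solution-dependence of $h_0,g_0$ (uniformity of the bound over all admissible selections) is a point the paper leaves implicit, but it is the same argument.
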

\begin{proof}
Suppose that for $t\geq t_0$, $x$ is a solution of the  initial value problem
\begin{equation}\label{32}
\begin{alignedat}{2}
\dot{x}(t)+a(t)\int_{h(t)}^t  x(s)d_sA(t,s)-b(t)\int_{g(t)}^t x(s) d_s
B(t,s)&=f(t),&\quad t&\geq t_0,\\
x(t)&=0,& \quad t&\leq t_0,
\end{alignedat}
\end{equation}
where $f$ is an essentially bounded function on $[t_0,\infty)$. 
By Lemma~\ref{lemma7}  there exist functions $h_0(t)\leq t$,  $h(t)\leq h_0(t)\leq t$ 
and $g_0(t)\leq t$, $g(t)\leq g_0(t)\leq t$
such that
$$
\int_{h(t)}^t  x(s)d_s A(t,s)= x(h_0(t)),\quad
 \int_{g(t)}^t  x(s)d_s B(t,s)= x(g_0(t)),
$$
hence $x$ satisfies the equation
\begin{equation}
\label{43add}
\dot{x}(t)+a(t)x(h_0(t))-b(t)x(g_0(t))=f(t).
\end{equation}
By Theorem~\ref{theorem2}, equation (\ref{43add}) is uniformly exponentially stable.
Lemma~\ref{lemma1} yields that the solution $x$ of a uniformly exponentially stable equation with an essentially 
bounded right-hand side $f$ is an essentially bounded function. 
Thus, for any essentially bounded function $f$, the solution
of problem (\ref{32}) is essentially bounded.
By Lemma~\ref{lemma2}, Remark~\ref{remark1} and Lemma~\ref{lemma_p}, equation  (\ref{33}) is asymptotically stable.
Also, by Lemma~\ref{lemma_p}, under (\ref{cond1abc}) 
equation  (\ref{33}) is
uniformly exponentially stable.
\qed\end{proof}

The proofs of the following two theorems are similar to the proofs 
of Theorems~\ref{theorem5} and~\ref{theorem6}  and thus are omitted.

\begin{theorem}\label{theorem11}
Assume that $a(t)\geq a_0>0$ and for some $t_0\geq 0$
$$
\esssup_{t\geq t_0} \frac{b(t)}{a(t)}<1.$$
Then the equation 
\begin{equation*}
\dot{x}(t)+a(t)x(t)- b(t) \int_{g(t)}^t x(s) d_s B(t,s)=0
\end{equation*}
is asymptotically stable, and, if in addition {\rm(\ref{cond1abc})} holds,
uniformly exponentially stable.
\end{theorem}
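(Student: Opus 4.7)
The plan is to mirror the proof of Theorem~\ref{theorem5}, using Lemma~\ref{lemma7} in place of Lemma~\ref{lemma5}. First I would fix $t_0 \geq 0$, take an arbitrary essentially bounded $f$ on $[t_0, \infty)$, and consider the solution $x$ of the non-homogeneous initial value problem
\begin{equation*}
\dot{x}(t) + a(t) x(t) - b(t) \int_{g(t)}^t x(s) d_s B(t,s) = f(t), \quad t \geq t_0; \qquad x(t) = 0, \quad t \leq t_0.
\end{equation*}

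Next, since the solution $x$ is continuous on $[t_0, \infty)$, Lemma~\ref{lemma7} supplies a measurable function $g_0$ with $g(t) \leq g_0(t) \leq t$ such that $\int_{g(t)}^t x(s) d_s B(t,s) = x(g_0(t))$, and hence the same $x$ satisfies the point-delay equation
\begin{equation*}
\dot{x}(t) + a(t) x(t) - b(t) x(g_0(t)) = f(t).
\end{equation*}
The corresponding homogeneous equation is precisely equation (\ref{1}) with $h(t) \equiv t$, so the hypotheses $a(t) \geq a_0 > 0$ and $\esssup_{t \geq t_0} b(t)/a(t) < 1$ match exactly the assumptions of Theorem~\ref{theorem1}, which therefore delivers uniform exponential stability of this reduced homogeneous equation.

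Finally, Lemma~\ref{lemma1} implies that $x$, being the solution of a uniformly exponentially stable equation driven by an essentially bounded right-hand side, is itself essentially bounded on $[t_0, \infty)$. Since this holds for every essentially bounded $f$, Lemma~\ref{lemma2} combined with Remark~\ref{remark1} yields uniform exponential stability of the original distributed-delay equation; asymptotic stability then follows immediately, and condition (\ref{cond1abc}) is used only if one routes through Lemma~\ref{lemma_p} as in Theorem~\ref{theorem10}. I expect no substantive obstacle, since Lemma~\ref{lemma7} carries the only delicate step -- collapsing the distributed delay to a single point delay -- after which the argument is a direct transcription of the schemes already developed for Theorems~\ref{theorem5} and~\ref{theorem10}.
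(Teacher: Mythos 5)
Your proposal is correct and follows exactly the route the paper intends: the proof of Theorem~\ref{theorem11} is omitted in the paper precisely because it repeats the scheme of Theorem~\ref{theorem5}, with Lemma~\ref{lemma7} collapsing the distributed delay to a point delay $g_0$, Theorem~\ref{theorem1} handling the reduced equation, and Lemma~\ref{lemma1}, Lemma~\ref{lemma2} and Remark~\ref{remark1} finishing the Bohl--Perron argument. Your closing remark is also apt: since $a(t)\geq a_0>0$ and $\esssup b/a<1$ force (\ref{cond1abc}) to hold automatically, no detour through Lemma~\ref{lemma_p} is actually needed here.
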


\begin{theorem}\label{theorem12}
Assume that condition {\rm(\ref{cond2})} holds,  for some $t_0\geq 0$ 
$$
\esssup_{t\geq t_0}\frac{b(t)}{a(t)}<1,\quad \esssup_{t\geq t_0} \int_{u(t)}^{U(t)} a(s) ds<1,
$$
 
$$
\left\|\left(  \int_{h(\cdot)}^{ \, \cdot} a(s) ds-\frac{1}{e}\right)^{ \! +}\right\|_{[t_0,\infty)}
<\frac{1-\left\|\frac{b}{a}\right\|_{[t_0,\infty)}}{\left\|1-\frac{b}{a}\right\|_{[t_0,\infty)}}
\left(1-\left\|  \int_{u(\cdot)}^{U(\cdot)} a(s) ds\right\|_{[t_0,\infty)}\right).
$$
Then equation {\rm(\ref{33})} is asymptotically stable and, if in addition {\rm(\ref{18b})} holds,
uniformly exponentially stable.
\end{theorem}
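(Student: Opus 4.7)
The plan is to mirror the strategy used in the proof of Theorem~\ref{theorem10}, where a distributed-delay equation is reduced to a pointwise-delay equation by Lemma~\ref{lemma7}, except that now Theorem~\ref{theorem3} (rather than Theorem~\ref{theorem2}) is invoked at the reduction step. Concretely, fix an essentially bounded $f$ on $[t_0,\infty)$ and let $x$ solve the initial value problem obtained from (\ref{33}) with forcing $f$ and zero prehistory on $(-\infty,t_0]$. By Lemma~\ref{lemma7} applied separately to the two distributed terms, there exist measurable $h_0,g_0$ with $h(t)\le h_0(t)\le t$ and $g(t)\le g_0(t)\le t$ such that $\int_{h(t)}^t x(s)\,d_s A(t,s)=x(h_0(t))$ and $\int_{g(t)}^t x(s)\,d_s B(t,s)=x(g_0(t))$, so $x$ satisfies the pointwise-delay equation
\begin{equation*}
\dot{x}(t)+a(t)x(h_0(t))-b(t)x(g_0(t))=f(t).
\end{equation*}

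Next, I would verify that this auxiliary equation falls under the hypotheses of Theorem~\ref{theorem3}. Condition (\ref{cond2}) is assumed. Since $h(t)\le h_0(t)\le t$, one has
\begin{equation*}
\left(\int_{h_0(t)}^t a(s)\,ds-\frac{1}{e}\right)^{\! +}\le \left(\int_{h(t)}^t a(s)\,ds-\frac{1}{e}\right)^{\! +},
\end{equation*}
while $h_0(t),g_0(t)\in[u(t),U(t)]$ yields
\begin{equation*}
\left|\int_{h_0(t)}^{g_0(t)} a(s)\,ds\right|\le \int_{u(t)}^{U(t)} a(s)\,ds.
\end{equation*}
Together with $\|b/a\|<1$ from the first displayed hypothesis, the assumed inequality $\int_{u(t)}^{U(t)} a(s)\,ds<1$ guarantees the analogue of (\ref{18abc}), and the third hypothesis is exactly the estimate needed to imply (\ref{18a}) for the reduced equation. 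Hence Theorem~\ref{theorem3} applies to give uniform exponential stability of $\dot{y}+a(t)y(h_0(t))-b(t)y(g_0(t))=0$.

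With this in hand, Lemma~\ref{lemma1} shows that the solution $x$ of the forced pointwise equation with zero initial history is essentially bounded on $[t_0,\infty)$ for every essentially bounded $f$. Since $x$ solves the original forced distributed equation (\ref{32}) as well, the Bohl--Perron principle (Lemma~\ref{lemma2}, Remark~\ref{remark1}) yields uniform exponential stability of the transformed equation (i.e., the equation in the variable $s=p(t)=\int_{t_0}^t a(\zeta)\,d\zeta$), and then Lemma~\ref{lemma_p} applied with this change of variables gives asymptotic stability of (\ref{33}). Under the additional condition (\ref{18b}), the essential boundedness of $a$ guarantees the first part of (\ref{10abc3}) while (\ref{18b}) delivers the second, so the final assertion of Lemma~\ref{lemma_p} promotes asymptotic stability to uniform exponential stability.

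The only subtle step is the reduction by Lemma~\ref{lemma7}: the function $h_0$ furnished by that lemma depends on $x$, so one must be careful that the resulting pointwise equation is used only to deduce \emph{a priori} bounds on the solution $x$ of the original (distributed) problem, not as a stand-alone equation whose fundamental function is then substituted back in. This is exactly how Theorems~\ref{theorem5}, \ref{theorem6} and \ref{theorem10} handle the same issue, and the same discipline suffices here, so I expect no genuine difficulty beyond clean bookkeeping of the constants after the $t\mapsto p(t)$ substitution.
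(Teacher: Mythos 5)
Your proposal is correct and follows essentially the same route the paper intends: the paper omits this proof, stating it is analogous to Theorems~\ref{theorem5}, \ref{theorem6} and \ref{theorem10}, namely reduction of the distributed terms to concentrated delays via Lemma~\ref{lemma7}, verification of the hypotheses of Theorem~\ref{theorem3} for the reduced equation using $h(t)\le h_0(t)\le t$ and $h_0(t),g_0(t)\in[u(t),U(t)]$, and the Bohl--Perron/Lemma~\ref{lemma_p} conclusion. Your explicit remark that the $x$-dependent delays $h_0,g_0$ are used only to obtain a priori bounds on the solution of the forced problem is exactly the discipline the paper relies on.
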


\subsection{Integro-differential Equations}

The integro-differential equation
\begin{equation}\label{31}
\dot{x}(t)+\int_{h(t)}^t K(t,s) x(s)ds-\int_{g(t)}^t P(t,s) x(s)ds=0,
\end{equation}
where $K(t,s)$ and $P(t,s)$ are Lebesgue 
measurable locally integrable on $[0,\infty)\times [0,\infty)$ functions, $K(t,s)\geq 0, 
P(t,s)\geq 0$, is a particular case of (\ref{33}).
After denoting 
$$
a(t)=\int_{h(t)}^t K(t,s)ds, \quad b(t)=\int_{g(t)}^t P(t,s)ds,
$$
\begin{align*}
A(t,s) &= \left\{ \begin{array}{ll}   \frac{1}{a(t)} \int_{h(t)}^s K(t,\zeta)~d\zeta, \ &  a(t)>0,
\\ 0, & a(t)=0,  \end{array} \right. \\
B(t,s) &= \left\{ \begin{array}{ll}   \frac{1}{b(t)} \int_{g(t)}^s P(t,\zeta)~d\zeta, \ \ &  b(t)>0,
\\ 0, & b(t)=0,  \end{array} \right. 
\end{align*}
equation (\ref{31}) has the form of (\ref{33}).

Assume that for the functions $a,b,h,g$ conditions (a1), (a2) hold.
Denote $u(t)=\min\{h(t),g(t)\},$ $U(t)=\max\{h(t),g(t)\}$.
The following theorems are corollaries of Theorems~\ref{theorem10}--\ref{theorem12}.

\begin{theorem}\label{theorem7} 
Assume that condition  {\rm(\ref{cond1})} holds 
and for some $t_0\geq 0$
\[
\left\| \!\left(  \int_{h(\cdot)}^{ \, \cdot}  \! \! 
[a(s) \!- \!b(s)] ds \!- \!\frac{1}{e} \!\right)^{ \! \!+}\right\|_{[t_0,\infty)} \! \!
+ 2 \left\| \!\frac{b}{a \!- \!b}\right\|_{[t_0,\infty)} \!\left\|   
\int_{u(\cdot)}^{U(\cdot)} \! \! \! [a(s) \!- \!b(s)] ds
\right\|_{[t_0,\infty)} \! \! \!
< \!1,
\]
Then equation 
{\rm(\ref{31})} is asymptotically stable and, if in addition {\rm(\ref{cond1abc})} holds,
uniformly exponentially stable.
\end{theorem}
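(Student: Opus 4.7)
The plan is to reduce equation \rf{31} to the distributed delay equation \rf{33} via the normalization already sketched in the text before the theorem, and then quote Theorem~\ref{theorem10}. First I would verify that with the choice
\[
a(t)=\int_{h(t)}^t K(t,s)\,ds,\qquad b(t)=\int_{g(t)}^t P(t,s)\,ds,
\]
and
\[
A(t,s)=\frac{1}{a(t)}\int_{h(t)}^{s} K(t,\zeta)\,d\zeta,\qquad B(t,s)=\frac{1}{b(t)}\int_{g(t)}^{s} P(t,\zeta)\,d\zeta
\]
(extended by $0$ where the denominator vanishes), one has
\[
\int_{h(t)}^t K(t,s)x(s)\,ds = a(t)\int_{h(t)}^t x(s)\,d_sA(t,s),
\]
and similarly for the $P$--term, so that \rf{31} takes precisely the form \rf{33}.

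Next I would check that the kernels $A(t,s)$, $B(t,s)$ satisfy the structural assumptions required for Theorem~\ref{theorem10}: measurability in both variables (immediate from measurability of $K,P$ and Fubini), left continuity and non-decreasing monotonicity in $s$ for a.e.\ $t$ (from non-negativity of $K,P$ and absolute continuity of the indefinite integral), local integrability in $t$ for fixed $s$, and the boundary normalizations $A(t,h(t))=B(t,g(t))=0$ and $A(t,t^+)=B(t,t^+)=1$. The latter two hold by construction of $a(t)$ and $b(t)$ as the denominators. On the set where $a(t)=0$ or $b(t)=0$ the corresponding integral contribution is identically zero, so the definitions are consistent with \rf{31}.

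Having identified \rf{31} with a special case of \rf{33}, I would observe that the hypotheses of Theorem~\ref{theorem7} are literally the hypotheses of Theorem~\ref{theorem10} applied to the functions $a,b,h,g,u,U$ just constructed. The condition \rf{cond1} and the displayed inequality involving $(\int_{h(\cdot)}^{\,\cdot}[a-b]-\tfrac1e)^+$ and $\int_{u(\cdot)}^{U(\cdot)}[a-b]$ transfer verbatim. Therefore Theorem~\ref{theorem10} gives asymptotic stability of \rf{31}; adding \rf{cond1abc} gives uniform exponential stability.

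The main (and essentially only) obstacle is the verification that the induced $A(t,s), B(t,s)$ indeed fit the framework of \rf{33}, in particular the normalization $A(t,t^+)=1$ and the measure-theoretic identity turning the integrals against $K,P$ into Stieltjes integrals against $A,B$; once this bookkeeping is done the result is immediate from Theorem~\ref{theorem10}, with no new analytic content beyond what is already in the proof of that theorem.
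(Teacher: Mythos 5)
Your proposal is correct and follows exactly the paper's own route: the paper proves Theorem~\ref{theorem7} precisely by the normalization $a(t)=\int_{h(t)}^t K(t,s)\,ds$, $b(t)=\int_{g(t)}^t P(t,s)\,ds$ with the cumulative kernels $A,B$, recasting \rf{31} as a special case of \rf{33}, and then invoking Theorem~\ref{theorem10}. The bookkeeping you flag (monotonicity, the boundary normalizations, and the degenerate case $a(t)=0$ or $b(t)=0$) is all that is needed, and the paper treats it as immediate.
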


\begin{theorem}\label{theorem8}
Assume that $a(t)\geq a_0>0$, for some $t_0\geq 0$
$ 
\esssup_{t\geq t_0} \frac{b(t)}{a(t)}<1$.
Then the equation 
$$
\dot{x}(t)+a(t)x(t)-\int_{g(t)}^t P(t,s) x(s)ds=0
$$
is asymptotically stable and, if in addition {\rm(\ref{cond1abc})} holds, 
uniformly exponentially stable.
\end{theorem}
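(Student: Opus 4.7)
The plan is to recognize Theorem~\ref{theorem8} as a direct corollary of Theorem~\ref{theorem11} via the general reduction of integro-differential equations to distributed-delay equations that was laid out just before Theorem~\ref{theorem7}. Specifically, I would define
\[
b(t) = \int_{g(t)}^t P(t,s)\,ds, \qquad
B(t,s) = \begin{cases} \frac{1}{b(t)} \int_{g(t)}^s P(t,\zeta)\,d\zeta, & b(t)>0, \\ 0, & b(t)=0, \end{cases}
\]
so that on $\{b(t)>0\}$ we have $\int_{g(t)}^t x(s)\,d_s B(t,s) = \frac{1}{b(t)} \int_{g(t)}^t P(t,s) x(s)\,ds$, giving the identity $\int_{g(t)}^t P(t,s) x(s)\,ds = b(t) \int_{g(t)}^t x(s)\,d_s B(t,s)$. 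On the null set where $b(t)=0$, the nonnegativity $P(t,s)\geq 0$ forces the integrand to vanish a.e., so both sides are zero and the identity persists. Hence the equation in the statement is equivalent to
\[
\dot{x}(t) + a(t)x(t) - b(t) \int_{g(t)}^t x(s)\,d_s B(t,s) = 0,
\]
which is precisely the equation treated in Theorem~\ref{theorem11}.

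Next I would verify that the hypotheses on $B(t,s)$ required for (\ref{33}) are met: by construction $B(t,\cdot)$ is non-decreasing in $s$ (since $P(t,s)\geq 0$), left continuous, satisfies $B(t,g(t))=0$ and $B(t,t^+)=1$, and is measurable jointly in $(t,s)$ because $P$ and $g$ are measurable and $b$ is measurable. The remaining hypotheses of Theorem~\ref{theorem11} follow immediately: $a(t) \geq a_0 > 0$ is assumed directly, and the essential supremum bound $\esssup_{t \geq t_0} b(t)/a(t) < 1$ is exactly the assumption of Theorem~\ref{theorem8}. Applying Theorem~\ref{theorem11} yields asymptotic stability, and, if in addition (\ref{cond1abc}) holds, uniform exponential stability.

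The only real obstacle is a bookkeeping one: on the set where $b(t) = 0$ the function $B(t,s)$ is defined to be identically zero rather than a probability distribution, so one has to confirm that the equivalence between the two forms of the equation still holds a.e. and that Lemma~\ref{lemma7} (invoked inside the proof of Theorem~\ref{theorem11} through Theorem~\ref{theorem10}) still applies. As noted above, the nonnegativity of $P$ ensures the original integral vanishes wherever $b(t)=0$, so both forms of the equation coincide a.e.~and the argument of Theorem~\ref{theorem11} goes through without modification. Everything else is routine reduction.
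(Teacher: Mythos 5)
Your proposal is correct and matches the paper's own route exactly: the paper states Theorems 4.7--4.9 as immediate corollaries of Theorems 4.4--4.6 via precisely the reduction you describe (setting $b(t)=\int_{g(t)}^t P(t,s)\,ds$ and normalizing $P$ into a distribution $B(t,\cdot)$), and gives no further detail. Your careful handling of the set where $b(t)=0$ is a reasonable filling-in of a technicality the paper glosses over, and it does not change the argument.
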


\begin{theorem}\label{theorem9}
Assume that condition {\rm(\ref{cond2})} holds, and for some $t_0\geq 0$
$$
\esssup_{t\geq t_0} \frac{b(t)}{a(t)}<1,\quad \esssup_{t\geq t_0}  \int_{u(t)}^{U(t)} a(s) ds<1,
$$
$$
\left\|\left(  \int_{h(\cdot)}^{ \, \cdot} a(s) ds-\frac{1}{e}\right)^{ \! +}\right\|_{[t_0,\infty)}
<\frac{1-\left\|\frac{b}{a}\right\|_{[t_0,\infty)}}{\left\|1-\frac{b}{a}\right\|_{[t_0,\infty)}}
\left(1-\left\|  \int_{u(\cdot)}^{U(\cdot)} a(s) ds\right\|_{[t_0,\infty)}\right).
$$
Then equation {\rm(\ref{31})} is asymptotically stable and,
if in addition {\rm(\ref{18b})} holds, 
uniformly exponentially stable.
\end{theorem}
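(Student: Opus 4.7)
The plan is to prove Theorem~\ref{theorem9} as a direct corollary of Theorem~\ref{theorem12} by recognizing that the integro-differential equation~(\ref{31}) is precisely the distributed-delay equation~(\ref{33}) under the explicit identification of $a$, $b$, $A(t,s)$, $B(t,s)$ given right before the statement of Theorem~\ref{theorem7}.

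First I would verify that the functions
$$
a(t)=\int_{h(t)}^t K(t,s)\,ds,\qquad b(t)=\int_{g(t)}^t P(t,s)\,ds
$$
together with
$A(t,s)=\frac{1}{a(t)}\int_{h(t)}^s K(t,\zeta)\,d\zeta$ on $\{a(t)>0\}$ and
$B(t,s)=\frac{1}{b(t)}\int_{g(t)}^s P(t,\zeta)\,d\zeta$ on $\{b(t)>0\}$
satisfy the hypotheses imposed on $A$ and $B$ in equation~(\ref{33}): measurability and local integrability follow from the hypotheses on $K,P$; left-continuity and monotonicity in $s$ follow from $K,P\ge 0$; while $A(t,h(t))=B(t,g(t))=0$ and $A(t,t^+)=B(t,t^+)=1$ are immediate from the normalization. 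Thus equation~(\ref{31}) can be rewritten as~(\ref{33}) because
$$
a(t)\int_{h(t)}^t x(s)\,d_sA(t,s)=\int_{h(t)}^t K(t,s)x(s)\,ds,
$$
and analogously for the term involving $B$ and $P$.

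Next I would check that the assumptions of Theorem~\ref{theorem9} are exactly the assumptions of Theorem~\ref{theorem12} for the newly defined $a$ and $b$. Condition~(\ref{cond2}) and the bounds
$\|b/a\|_{[t_0,\infty)}<1$, $\esssup\int_{u(t)}^{U(t)}a(s)\,ds<1$, and the displayed norm inequality involving $\bigl(\int_{h(\cdot)}^{\,\cdot}a(s)\,ds-\tfrac{1}{e}\bigr)^+$ are literally transcribed from Theorem~\ref{theorem12}, with $u$ and $U$ the same min/max of $h$ and $g$. Condition~(\ref{18b}) is also phrased identically in terms of $a$.

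Finally I would invoke Theorem~\ref{theorem12} to conclude asymptotic stability, and uniform exponential stability under the additional assumption~(\ref{18b}). There is no real obstacle here; the only subtle point is handling the possible zero sets $\{a(t)=0\}$ or $\{b(t)=0\}$ in the definitions of $A,B$, but on those sets the corresponding term in~(\ref{31}) vanishes identically, so the rewriting into~(\ref{33}) is consistent and no hypothesis of Theorem~\ref{theorem12} is disturbed. This reduction argument is precisely the one announced in the paper (``The following theorems are corollaries of Theorems~\ref{theorem10}--\ref{theorem12}''), so the proof of Theorem~\ref{theorem9} can reasonably be stated in a few lines or even omitted as ``similar to the reduction above.''
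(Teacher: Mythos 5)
Your proposal is correct and follows exactly the paper's route: the paper declares Theorems 7--9 to be corollaries of Theorems 10--12 via the explicit identification of $a$, $b$, $A(t,s)$, $B(t,s)$ that rewrites (\ref{31}) as (\ref{33}), and then Theorem~\ref{theorem9} is just Theorem~\ref{theorem12} under that identification. Your extra remark about the sets where $a(t)=0$ or $b(t)=0$ is a sensible check, but otherwise there is nothing to add.
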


\section{Mackey--Glass equations}

We recall that an equilibrium of a nonlinear delay differential equation is {\em locally 
exponentially stable} (LES) if the linearized equation is uniformly exponentially stable.
In this section we consider Mackey--Glass equation \eqref{51}
under the following conditions which are assumed without further mentioning them:
\begin{itemize}
\item[(c1)] $r$ is a measurable  essentially bounded function on $[t_0,\infty)$ function, 
$r(t)\geq 0$, $r(t)\neq 0$ a.e.,  and for some $T>0$ (\ref{18b}) holds;
\item[(c2)] $h,g$ are measurable functions, and there exist $\tau>0$ and $\sigma>0$ such that  $0\leq t-h(t)\leq\tau$ and $0\leq 
t-g(t)\leq\sigma$;
\item[(c3)] $n, \beta, \gamma$ are positive constants. 
\end{itemize}
Equation \eqref{MG} is considered satisfying (c1), (c2) with $r,h,g$ replaced by  $s, p,q$ and $\beta>1$, $n>0$.

We will obtain LES conditions for the positive equilibrium. 
LES conditions for the trivial equilibrium can be obtained similarly.

\begin{theorem}\label{theorem13}
Assume that 
\begin{equation}\label{55a}
\beta>\gamma, \;n\leq 1\quad\mbox{or}\quad n>1, \;1<\frac{\beta}{\gamma}<1+\frac{1}{n-1}
\end{equation}
and at least one of the conditions of either Corollary~{\rm\ref{corollary3}} or {\rm\ref{corollary7}} holds, where
$a=\gamma$, $b=\gamma\big(1-n+\frac{\gamma n}{\beta}\big)$.
Then the positive  equilibrium  of equation {\rm(\ref{51})} is~LES. 
\end{theorem}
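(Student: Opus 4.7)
The plan is to recognize this theorem as a direct packaging of the linear stability corollaries from Section~3, after identifying the linearization (\ref{52}) of Mackey--Glass equation (\ref{51}) about $x^\ast$ with equation (\ref{5}).

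First I would match coefficients. The linearization (\ref{52}) reads
\[
\dot{x}(t)+r(t)\bigl[\gamma\, x(h(t))-\gamma\bigl(1-n+\tfrac{\gamma n}{\beta}\bigr) x(g(t))\bigr]=0,
\]
which is precisely (\ref{5}) with the constants $a=\gamma$ and $b=\gamma\bigl(1-n+\frac{\gamma n}{\beta}\bigr)$ prescribed in the theorem. By the definition of LES in Section~2, it suffices to prove that (\ref{52}) is uniformly exponentially stable.

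Second, I would verify that assumption (\ref{55a}) encodes the standing positivity/ordering hypothesis (\ref{19a}) for (\ref{5}), namely $a>b>0$. The inequality $a>b$ reduces to $n>\gamma n/\beta$, equivalently $\beta>\gamma$, which is the first half of (\ref{55a}). Positivity $b>0$ requires $1-n+\gamma n/\beta>0$: when $n\leq 1$ this is automatic since $1-n\geq 0$, and when $n>1$ it rearranges to $\beta/\gamma<n/(n-1)=1+\frac{1}{n-1}$, which is the second case of (\ref{55a}). The remaining parts of (\ref{19a}), together with condition (\ref{cond1abcd}) needed for the exponential (as opposed to merely asymptotic) conclusion in Corollaries~\ref{corollary3} and~\ref{corollary7}, are supplied by hypothesis (c1) (where (\ref{18b}) with $T>0$ yields both $\int_0^\infty r(s)\,ds=\infty$ and a uniform positive lower bound on averages of $r$).

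Once (\ref{19a}) is in place, the additional delay-integral hypothesis in the theorem is exactly the remaining hypothesis of whichever of Corollary~\ref{corollary3} or Corollary~\ref{corollary7} is being invoked; its conclusion is uniform exponential stability of (\ref{52}), which is LES of the positive equilibrium. I do not expect any serious obstacle: the theorem is essentially a labeling lemma, and the only substantive step is the short algebraic check that (\ref{55a}) is equivalent to $\gamma>\gamma(1-n+\gamma n/\beta)>0$. The slight subtlety worth being explicit about is that the linearization's variable coefficients are $\gamma r(t)$ and $\gamma(1-n+\gamma n/\beta) r(t)$, so one must read $r(t)$ as the common multiplicative factor in (\ref{5}); the constants $a,b$ of the corollary are the $r$-free parts.
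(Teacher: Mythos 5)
Your proposal is correct and follows essentially the same route as the paper's own (much terser) proof: identify the linearization (\ref{52}) with equation (\ref{5}) for $a=\gamma$, $b=\gamma\bigl(1-n+\frac{\gamma n}{\beta}\bigr)$, check that (\ref{55a}) is equivalent to $a>b>0$, and invoke Corollary~\ref{corollary3} or~\ref{corollary7}. Your explicit verification that $b>0$ in both cases of (\ref{55a}) and that (c1) supplies the remaining hypotheses of (\ref{19a}) and (\ref{cond1abcd}) is a welcome addition the paper leaves implicit.
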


\begin{proof}
Linearized equation for  (\ref{51}) around the positive equilibrium  has  form (\ref{52}). Condition (\ref{55a})
implies that in (\ref{52}) $a>b$.
 Corollaries ~\ref{corollary3} and \ref{corollary7} imply that equation (\ref{52}) 
is uniformly exponentially stable. Then the positive  equilibrium  of equation (\ref{51}) is LES. 
\qed\end{proof}

\begin{remark}\label{remark5}
If $1-n+\frac{\gamma n}{\beta}<0$ then  (\ref{52}) is an equation with two positive coefficients. Explicit
exponential stability conditions for such equations with measurable parameters can be found in \cite{BB1, BB2, GD}.
Global stability of (\ref{51}) with $g(t)\equiv t$ was studied in  \cite{BBI2013}.
\end{remark}

Next, we present LES conditions for  the positive equilibrium $x^{\ast}=$ \lb
$(\beta-1)^{\frac{1}{n}}$ of equation (\ref{MG}).
As (\ref{lin}) involves more than two terms, we will apply Theorems~\ref{theorem4} and \ref{theorem6} 
for equations  with several delays.

\begin{theorem}\label{theorem15}
Assume that for equation {\rm(\ref{MG})} either
\begin{equation}\label{56}
\left\|\left(\alpha\int_{q(\cdot)}^{ \, \cdot} s(\tau) d\tau-\frac{1}{e}\right)^{ \! +}\right\|_{[t_0,\infty)}
+2\left\|\int_{\min\{p(\cdot),q(\cdot)\}}^{\max\{p(\cdot),q(\cdot)\}}s(\tau)d\tau\right\|_{[t_0,\infty)}<1
\end{equation}
is satisfied, or both inequalities below hold: 
\begin{equation}\label{57}
(1+\alpha)\left\|\int_{\min\{p(\cdot),q(\cdot)\}}^{\max\{p(\cdot),q(\cdot)\}}s(\tau)d\tau\right\|_{[t_0,\infty)}<1,
\end{equation}
\begin{equation}\label{58}
\left\|\left((1+\alpha)\int_{q(\cdot)}^{ \, \cdot} s(\tau) d\tau-\frac{1}{e}\right)^{ \! +}\right\|_{[t_0,\infty)}\hspace*{-21pt}<
1-\left\|\int_{\min\{p(\cdot),q(\cdot)\}}^{\max\{p(\cdot),q(\cdot)\}} (1+\alpha) s(\tau)d\tau\right\|_{[t_0,\infty)}
\end{equation}
where $ \alpha=\frac{n(\beta-1)}{\beta}$. Then the positive equilibrium of equation {\rm(\ref{MG})} is LES.
\end{theorem}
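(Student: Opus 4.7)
The plan is to reduce LES of $x^{\ast}=(\beta-1)^{1/n}$ to uniform exponential stability of the linearized equation (\ref{lin}), and then verify the latter by specializing the multi-delay results Theorems~\ref{theorem4} and~\ref{theorem6}. I would first rewrite (\ref{lin}) as $\dot{x}(t)+s(t)x(t)+\alpha s(t)x(q(t))-s(t)x(p(t))=0$ with $\alpha=n(\beta-1)/\beta>0$ (since $\beta>1$), viewing it as a particular case of (\ref{28}) with $m=2$ (positive terms $s(t)x(t)$ and $\alpha s(t)x(q(t))$) and $l=1$ (negative term $s(t)x(p(t))$). Then (b1), (b2) follow automatically from (c1), (c2), and aggregating via (\ref{line_1})--(\ref{line_2}) gives $a(t)=(1+\alpha)s(t)$, $b(t)=s(t)$, $a-b=\alpha s$, $h(t)=q(t)$, $g(t)=p(t)$, and $r(t)=\min\{p(t),q(t)\}$. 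Hypothesis (c1) yields both (\ref{cond1}) (via $\int_0^{\infty}s=\infty$) and (\ref{cond1abc})/(\ref{18b}).

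Under (\ref{56}) I would invoke Theorem~\ref{theorem4}. Since $\|b/(a-b)\|=1/\alpha$, its main inequality reduces, after factoring $\alpha$ out of the second integral, to
\[
\left\|\left(\alpha\!\int_{q(\cdot)}^{\,\cdot}\!s(\tau)\,d\tau-\tfrac{1}{e}\right)^{\!+}\right\|_{[t_0,\infty)}+2\left\|\int_{r(\cdot)}^{R(\cdot)}\!s(\tau)\,d\tau\right\|_{[t_0,\infty)}<1,
\]
which matches (\ref{56}). Under (\ref{57})+(\ref{58}) I would invoke Theorem~\ref{theorem6} instead: here $b/a=1/(1+\alpha)$, so the ratio $(1-\|b/a\|)/\|1-b/a\|$ collapses to $1$, and the two conditions (\ref{30b}), (\ref{30c}) become exactly (\ref{57}) and (\ref{58}). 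Either route guarantees uniform exponential stability of (\ref{lin}), and the LES definition at the start of Section~5 then delivers LES of the positive equilibrium.

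The main obstacle is that reading $R(t)=\max\{H(t),G(t)\}$ off (\ref{line_2}) naively gives $R(t)=\max\{t,p(t)\}=t$, because the non-delay positive slot forces $H(t)=t$; yet (\ref{56})--(\ref{58}) use the tighter upper limit $\max\{p(t),q(t)\}$. Bridging this gap requires refining the step $|\int_{h_0}^{g_0}(a-b)|\leq\int_{r}^{R}(a-b)$ inside the proof of Theorem~\ref{theorem4}: when one positive slot is the non-delay, the argument $h_0$ produced by Lemma~\ref{lemma5} can be constrained so that $\max\{h_0,g_0\}\leq\max\{p,q\}$. Equivalently, one may pre-process (\ref{lin}) by combining the pair $s(t)x(t)-s(t)x(p(t))=s(t)\int_{p(t)}^{t}\dot x(\zeta)\,d\zeta$ and invoking the single-delay Theorem~\ref{theorem2} on the resulting equation $\dot x+\alpha s\,x(q)+s(t)\int_{p(t)}^{t}\dot x\,d\zeta=0$. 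Either variant should produce precisely the sharper integral bound appearing in the theorem statement.
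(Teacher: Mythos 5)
Your reduction is exactly the paper's: linearize (\ref{MG}) to (\ref{lin}), view it as (\ref{28}) with $a_1=s$, $h_1(t)=t$, $a_2=\alpha s$, $h_2=q$, $b_1=s$, $g_1=p$, and feed $a=(1+\alpha)s$, $b=s$ into Theorems~\ref{theorem4} and~\ref{theorem6}; your computations $\left\|\frac{b}{a-b}\right\|=\frac{1}{\alpha}$ and $\frac{1-\|b/a\|}{\|1-b/a\|}=1$ are the ones the paper relies on, and your delay assignments $h=q$, $g=p$ are the ones consistent with (\ref{56}). You are also right that the point you flag is real: the published proof simply declares $R(t)=\max\{p(t),q(t)\}$, whereas (\ref{line_2}) literally gives $R(t)=\max\{H(t),G(t)\}=\max\{t,p(t)\}=t$ because of the non-delay slot $h_1(t)=t$.

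The difficulty is that neither of your two bridges closes this gap. Lemma~\ref{lemma5} produces $h_0(t)$ as a point of $[q(t),t]$ at which $x$ attains the convex combination $\frac{x(t)+\alpha x(q(t))}{1+\alpha}$; when $\max\{p(t),q(t)\}<t$ the intermediate value theorem gives no control beyond $q(t)\le h_0(t)\le t$, so $h_0$ cannot in general be ``constrained'' into $[\min\{p,q\},\max\{p,q\}]$ as you assert. Your second variant, rewriting $s(t)[x(t)-x(p(t))]=s(t)\int_{p(t)}^{t}\dot x(\zeta)\,d\zeta$ and rerunning the argument of Theorem~\ref{theorem2}, is sound but yields the bridge term $2\left\|\int_{p(\cdot)}^{\,\cdot}s(\tau)\,d\tau\right\|$, an integral over $[p(t),t]$, which is not dominated by $\int_{\min\{p,q\}}^{\max\{p,q\}}s$: for $p(t)<q(t)<t$ one has $\int_{p(t)}^{t}s\ge\int_{p(t)}^{q(t)}s$, with strict inequality whenever $\int_{q(t)}^{t}s>0$, so the condition you can actually verify is strictly more demanding than (\ref{56}). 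The same $R(t)=t$ issue afflicts the second branch, where (\ref{30b})--(\ref{30c}) require $\int_{r(\cdot)}^{R(\cdot)}a$ with $R(t)=t$, so (\ref{57})--(\ref{58}) are likewise not ``exactly'' the hypotheses of Theorem~\ref{theorem6}; your write-up treats that branch as immediate. In short, what your argument proves is Theorem~\ref{theorem15} with $\int_{\min\{p(\cdot),q(\cdot)\}}^{\max\{p(\cdot),q(\cdot)\}}$ replaced by $\int_{\min\{p(\cdot),q(\cdot)\}}^{\,\cdot}$ (or $\int_{p(\cdot)}^{\,\cdot}$); obtaining the statement exactly as printed needs an additional argument that neither you nor, for that matter, the paper's own two-line proof supplies.
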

\begin{proof}
The linearized equation for  (\ref{MG}) about the equilibrium has  form (\ref{lin}).
In order to apply Theorem~\ref{theorem4} to equation (\ref{lin}), denote
$$
a(t)=(1+\alpha)s(t), \quad b(t)=s(t), \quad h(t)=p(t), \quad g(t)=q(t),$$ $$
r(t)=\min\{p(t),q(t)\}, \quad R(t)=\max\{p(t),q(t)\}.
$$
If (\ref{56}) holds then conditions of Theorem~\ref{theorem4} are satisfied for (\ref{lin}), and so
the positive equilibrium of equation (\ref{MG}) is LES.

By Theorem~\ref{theorem6}, inequalities (\ref{57}) and (\ref{58}) 
imply uniform exponential stability for the zero solution of (\ref{lin}), thus  the positive equilibrium of equation (\ref{MG}) is LES.
\qed\end{proof}

\begin{figure}[ht]
\centering
\mycomment{
\vspace*{-55mm}
\hspace*{-2mm}
\includegraphics[scale=.35]{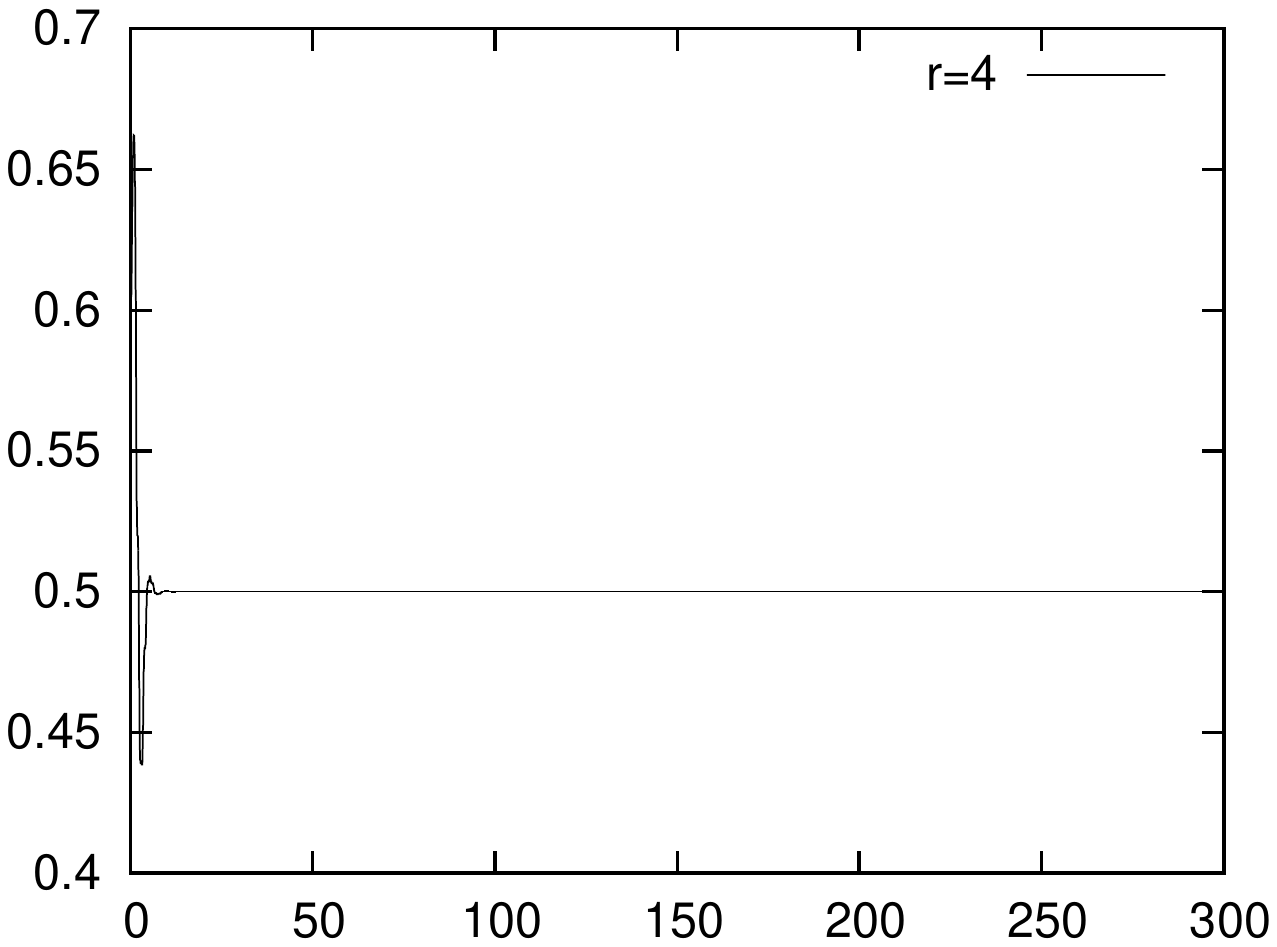}
\hspace{-15mm}
\includegraphics[scale=.35]{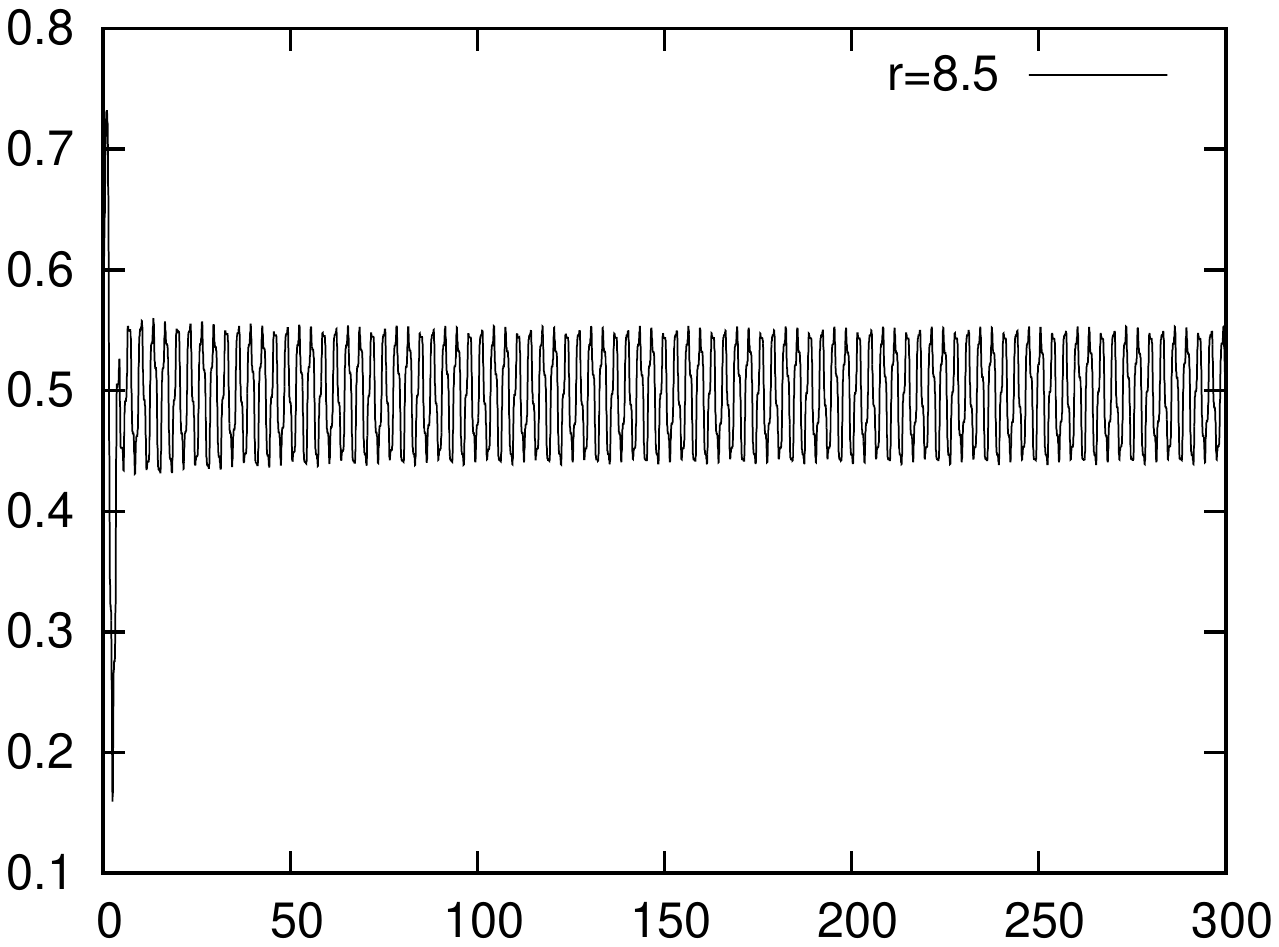}}
\vspace*{-12mm}
\hspace*{2mm}
\caption{A solution of  equation (\protect{\ref{ex_51}}) with $\beta=1.25$, $\tau=1$, $x^{\ast}=0.5$, $x(0)=0.6$,
$\varphi(t) \equiv 0.4$, $t<0$ and (left) $\sigma=1.1$, $r=4$, (right) $\sigma=1.1$, $r=8.5$.
}
\label{figure1}
\end{figure}

\begin{example}
Consider a particular case of (\ref{51})
\begin{equation}\label{ex_51}
\dot{x}(t)= r \sin^2 (\pi t)  \left[\beta \frac{x(t-\sigma)}{1+x^2(t-\sigma)}-  x(t-1)\right],
\end{equation}
with $\beta >1$, its linearized about $x^{\ast}=\sqrt{\beta-1}$ version is 
\[
\dot{x}(t)+ r \sin^2 (\pi t) \left[ x(t-1) -\left(\frac{2}{\beta} - 1 \right) x(t-\sigma) \right] =0.
\]
Let $\beta=1.25$. For (\ref{ex_51}), LES is guaranteed if any of the conditions of Corollaries~\ref{corollary3} or \ref{corollary7}
are satisfied for $a=1$, $b=0.6$, $h(t)=t-1$, $g(t)=t-\sigma$. Here $a-b=0.4$, $r(t)=r \sin^2 (\pi t)$,
$$
\esssup_{t\geq t_0} \int_{h(t)}^t r(s)ds =\frac{r}{2}.
$$
For $\sigma=1.1$, 
$$
\left| \int_{h(t)}^{g(t)} r(s)ds \right| \leq r \left( 0.05  +\frac{\sin(0.1)}{2} \right) \approx 0.0999917r<0.1r,
$$
as $|\cos(t)|\leq 1$.
Condition 1) in Corollary~\ref{corollary3}
is satisfied for $r\leq \frac{5}{e} \approx 1.8393972$,
while 2) holds if $r> \frac{5}{e}$ and $(0.32)r<1+\frac{1}{e}$, or $r \leq r_1 \approx 4.27$.
Note that Corollary~\ref{corollary7} here gives a worse estimate, as it involves  $r<2$. 

In Figure~\ref{figure1}, left, $r=4<4.27$, thus the conditions of Corollary~\ref{corollary3}
are satisfied, and we observe stability as predicted. For $r=8.5>4.27$ in Figure~\ref{figure1}, right,
the conditions are not satisfied, and we see sustainable oscillations. 



\begin{figure}[ht]
\centering
\mycomment{
\vspace{-55mm}
\hspace*{-2mm}
\includegraphics[scale=.35]{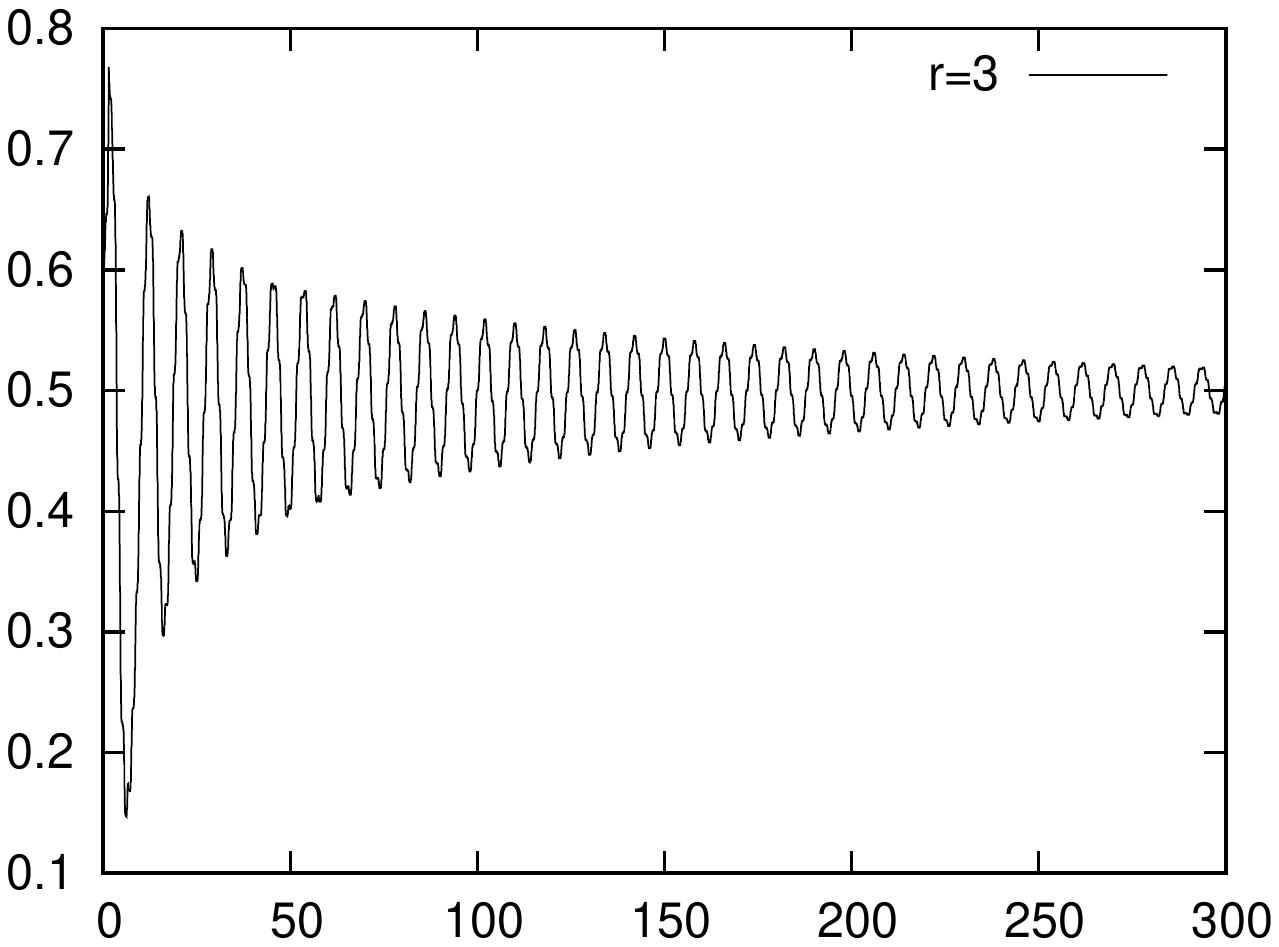}
\hspace{-15mm}
\includegraphics[scale=.35]{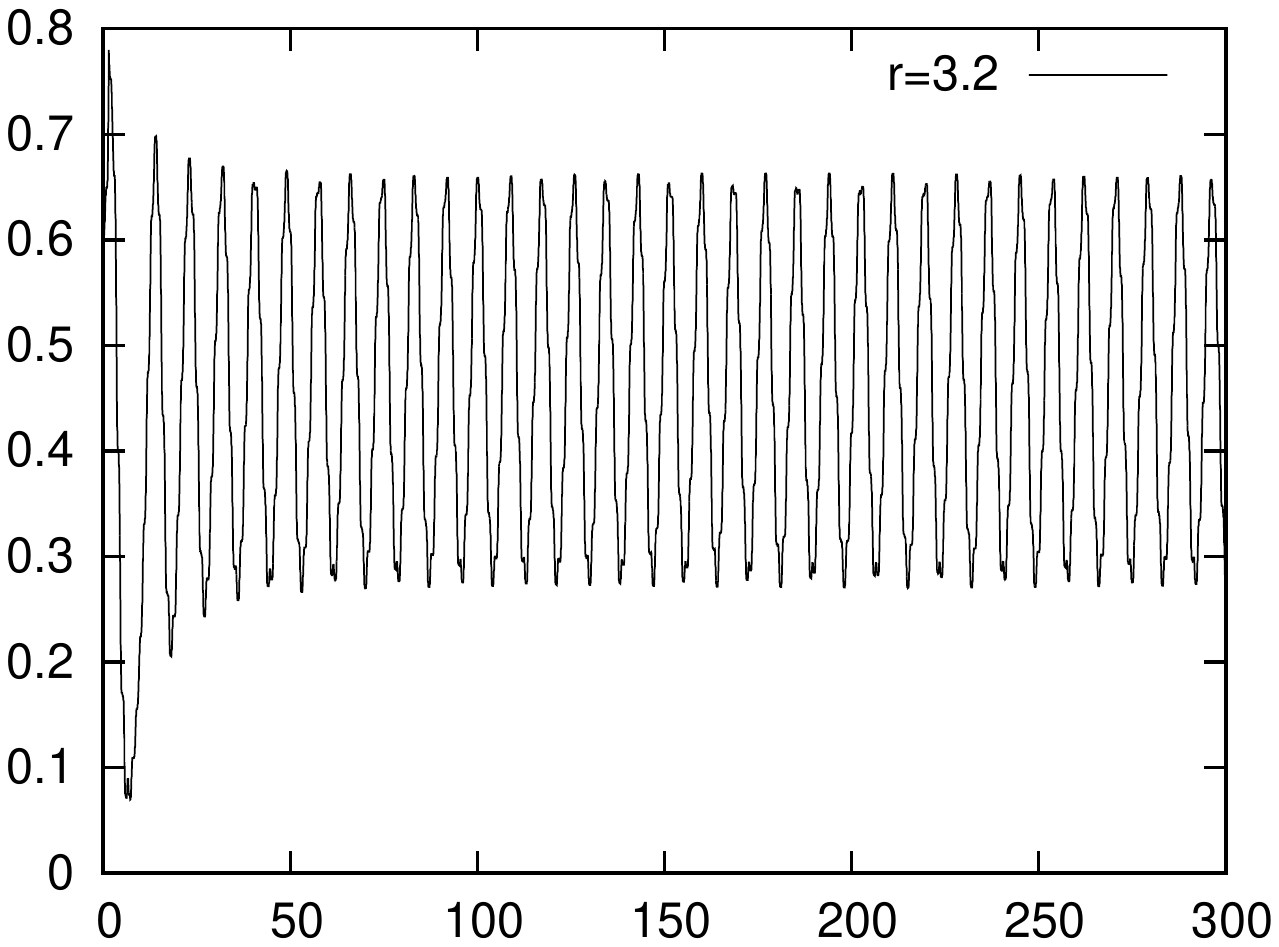}}
\vspace*{-12mm}
\hspace*{2mm}
\caption{A solution of  equation (\protect{\ref{ex_51}}) with $\beta=1.25$, $\tau=1$, $x^{\ast}=0.5$, $x(0)=0.6$,
$\varphi(t) \equiv 0.4$, $t<0$ and (left) $\sigma=1.5$, $r=3$, (right) $\sigma=1.5$, $r=3.2$.
}
\label{figure1a}
\end{figure}

For $\sigma=1.5$ we have
$$
\limsup_{t\rightarrow\infty} \left| \int_{h(t)}^{g(t)} r(s)ds \right| =\frac{r}{4}.
$$
The conditions of Corollary~\ref{corollary3} hold for $r<2+\frac{2}{e}\approx 2.73576$ and, similarly,
Corollary~\ref{corollary7} gives a worse estimate. However, here we illustrate that for 
$r=3$ where the conditions of Corollary~\ref{corollary3} are no longer satisfied, in Figure~\ref{figure1a}, left,
we still observe stability. However, for $r=3.2$ (Figure~\ref{figure1a}, right) the positive equilibrium is no longer 
stable. Let us note that the predicted and obtained in simulations instability bounds are not very far from each other.



\end{example}

Next, let us compare LES conditions for equation (\ref{MG}) with  known local stability tests.

\begin{example}\label{example5}
Consider the equation
\begin{equation}
\label{ex_5}
\dot{x}(t) =0.1\sin^2 (\pi t) \left[ \frac{2x(t-3)}{1+x^{n}(t-6)} - x(t) \right], \quad t \geq 0,
\end{equation}
Here $\alpha=\frac{n}{2}$.
By the first part of Theorem~\ref{theorem15} equation (\ref{ex_5}) is LES if 
$$\left( 0.1 \cdot \frac{n}{2} \cdot 3 - \frac{1}{e} \right)^{ \! +} +2 \cdot \frac{3}{2} \cdot 0.1 <1,$$
which is satisfied for $n <7.119$. The second part of Theorem~\ref{theorem15} requires
$$ 0.1 \cdot \frac{3}{2} \left( 1+\frac{n}{2} \right) <1,$$
which holds for $n<11.333$, and in addition
$$ \left(0.1 \cdot \frac{3}{2} \left( 1+\frac{n}{2} \right)
- \frac{1}{e}\right)^{ \! +} +0.1 \cdot \frac{3}{2}<1,$$
which is satisfied for $n<14.2$. Overall, Theorem~\ref{theorem15} implies LES of the equilibrium $x^{\ast}=1$ of 
(\ref{MG}) for $n<11.333$.  
Figure~\ref{figure2} illustrates solutions for $n=11$ where we observe stability of $x^{\ast}$ and $n=13$ 
where sustainable oscillations about $x^{\ast}$ are observed.

The assumptions of \cite[Theorem 3.5]{JMAA2017} are satisfied if $n<\frac{8}{3}$.
Further, conditions (a) and (b) of  \cite[Theorem 3.8]{JMAA2017} imply $n<4$, while (c) includes
$0.6(2+\frac{n}{2})<1$, which cannot be satisfied, while (d) implies $n<\frac{7}{3}$. We see that the results of 
Theorem~\ref{theorem15} for $n \in [4,11]$,
establishes LES of (\ref{ex_5}) while the tests of \cite{JMAA2017} fail. We are not aware of other stability 
conditions that can be applied to (\ref{ex_5}).

\begin{figure}[ht]
\centering
\mycomment{
\vspace*{-55mm}
\hspace*{-2mm}
\includegraphics[scale=.35]{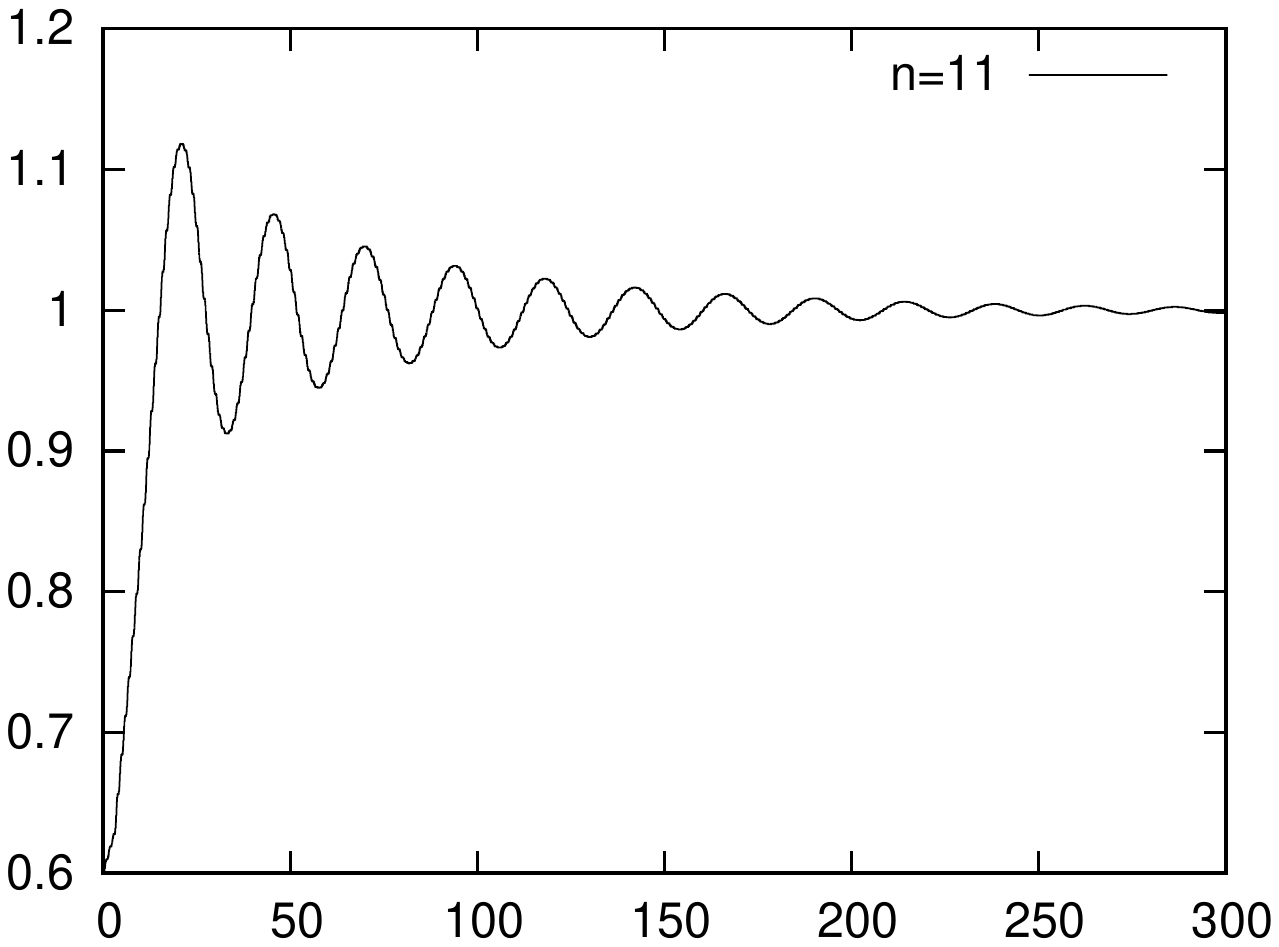}
\hspace*{-15mm}
\includegraphics[scale=.35]{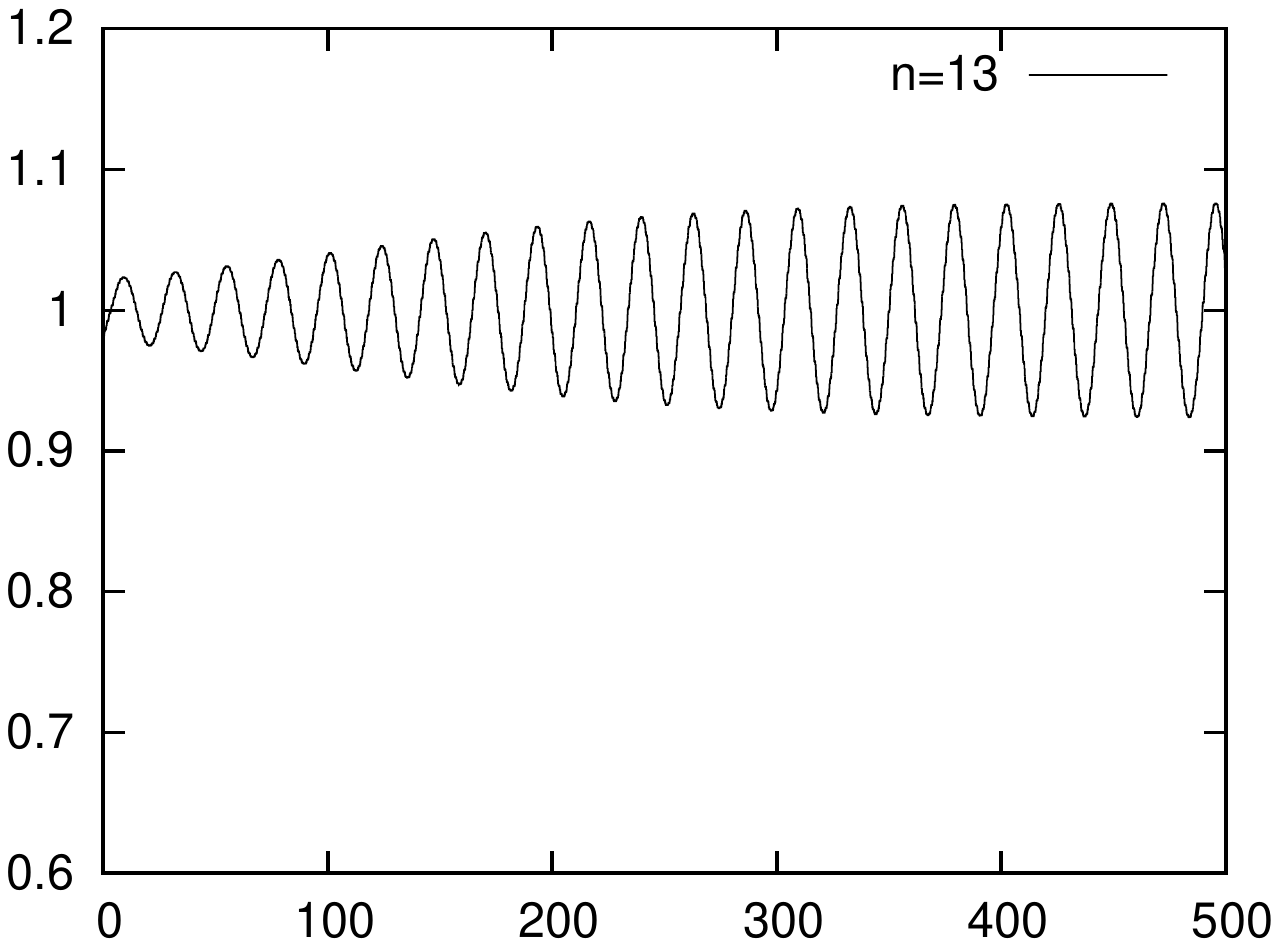}}
\vspace*{-12mm}
\hspace*{2mm}
\caption{A solution of  equation (\protect{\ref{ex_5}}) with (left)  $x(0)=0.6$,
$\varphi(t)=0.4$, $t<0$, $n=11$ and (right) $x(0)=0.98$,$\varphi(t)=0.98$, $t<0$ $n=13$.
}
\label{figure2}
\end{figure}

\end{example} 

\section{Discussion and Open Problems}

Investigation of local exponential stability for Mackey--Glass type models considered 
in the paper, or for nonlinear models with harvesting, leads to delay differential equations
with positive and negative coefficients (with or without a non-delay term) as linearized equations. 
However, even if such a non-delay term exists, it usually does not
dominate over the other terms. For such equations only few explicit stability conditions are known.
The present paper fills the gap.

All the equations considered are in the most general setting: the parameters are measurable,
and solutions are absolutely continuous functions. We obtain an exponential stability condition 
for test equation (\ref{5}), which is sharper than other known stability results. 
However, for some equations known stability results can be better, 
for example, stability results obtained for autonomous equations by the direct investigation of the roots of quasi-polynomials 
should outperform general results applied to this class of equations.

In the present paper we obtained local exponential stability results for Mackey--Glass type equation (\ref{51}).
Similarly, we can consider
an integro-diffe\-rential Mackey--Glass type equation
$$
\dot{x}(t)=r(t)\left(\beta\int_{h(t)}^t K(t,s) x(s)ds-\gamma x(g(t))\right)
$$
or a Mackey--Glass type equation with a distributed delay
$$
\dot{x}(t)=r(t)\left(\beta\int_{h(t)}^t  x(s)d_s R(t,s) -\gamma x(g(t))\right).
$$

One of open problems is to obtain explicit {\em instability} results for both linear and Mackey--Glass
equations considered in the paper. Some other open problems and topics for future research are listed below.

\begin{enumerate}
\item
It would be interesting to obtain new exponential stability conditions for (\ref{1}), which would allow to deduce the 
sharp exponential stability result for test equation (\ref{3}).

\item
Suppose that for equation (\ref{1}) conditions (a1), (a2) hold, $a(t)-b(t)\geq a_0>0$ and this equation is non-oscillatory.
Prove or disprove that (\ref{1}) is uniformly exponentially stable.

\item
Consider (\ref{1}) with $a(t)-b(t)$ being an oscillatory function.
The equation
$$
\dot{x}(t)+\sum_{k=1}^m a_k(t)x(h_k(t))=0,
$$
where all coefficients $a_k$ or part of them are eventually oscillatory functions,
was recently investigated in the papers \cite{BB5,BB4}.
Extend the results of \cite{BB5,BB4} to the equations
$$
\dot{x}(t)+\int_{h(t)}^t x(s) K(t,s)ds=0
$$
with an oscillatory kernel $K(t,s)$ and
$$
\dot{x}(t)+\int_{h(t)}^t x(s) d_s R(t,s)=0,
$$
where $R(t,s)$ can be both increasing and decreasing in the second argument.

\item
Consider all equations in the paper without the assumption that delay functions are bounded in condition (a2).
Is it possible to deduce asymptotic stability conditions? 
Note that Lemma~\ref{lemma2} which is one of  the main tools in our investigation assumes boundedness of delays.
However, an analogue of  Lemma~\ref{lemma2} exists in the case of infinite but ``uniformly exponentially decaying" memory  \cite{AS}.

\item
All the stability conditions for equation (\ref{28}) are obtained 
using the reduction to the equation with two delays which in the proof of Theorem~\ref{theorem3} would 
correspond to the change of the variable
$$
s= \int_{t_0}^t \sum_{k=1}^m a_k(\zeta)~d\zeta.
$$
Is it possible to obtain different (and, in some cases, sharper) conditions, with one of the choices for the change 
of the variable 
$$
s= \int_{t_0}^t \sum_{k\in I} a_k(\zeta)~d\zeta, \quad \mbox{where} \quad I \subset \{1,2, \dots, k \} ?
$$

\item
In the present paper we derived explicit uniform exponential stability conditions
for delay differential equations, integro-differential equations and equations with
distributed delays. Obtain  explicit uniform exponential stability conditions for mixed type equations as corollaries of 
Theorems~\ref{theorem10}-\ref{theorem12},
for example, the following ones:
$$
\dot{x}(t)+a(t)x(h(t))-\int_{g(t)}^t  K(t,s)x(s)ds=0,
$$$$
\dot{x}(t)+\int_{h(t)}^t K(t,s)x(s)ds-b(t)x(g(t))=0,
$$$$
\dot{x}(t)+\sum_{k=1}^m a_k(t)x(h_k(t))-\sum_{k=1}^l\int_{g_k(t)}^t x(s)d_s T_k(t,s)=0.
$$
\item
Investigate global exponential stability for nonlinear equations considered in the paper.
Is it possible to claim (at least, under certain additional conditions)
that local stability implies existence of a global solution and
global stability (certainly, for positive initial conditions)?
\end{enumerate}

\subsection*{Acknowledgment}
The  second  author  was  partially  supported  by  the NSERC research grant RGPIN-2015-05976.
The authors are grateful to the anonymous referee whose valuable comments significantly contributed 
to the presentation of the results.


\begin{thebibliography}{99}
\bibitem{ABBD}
Agarwal,~R.~P.,~Berezansky,~L.,~Braverman,~E.~and Domoshnitsky,~A.,
{\it Non\-oscillation Theory of Functional Differential Equations with
  Applications}.  New York: Springer 2012.

\bibitem{AS}
Azbelev,~N.~V.~and  Simonov,~P.~M., {\it Stability of Differential
Equations with Aftereffect.}  Stability Control
Theory Methods Appl. 20.  London: Taylor $\&$ Francis
2003.

\bibitem{JMAA_lin_nonlin}
Berezansky,~L.~and~Braverman,~E.,
 On stability of some linear and nonlinear delay differential equations,
{\em J.~Math.~Anal.~Appl.} { 314} (2006), 391 -– 411.

\bibitem{BB2}
Berezansky,~L.~and~Braverman,~E.,
On exponential stability of  linear differential
equations with several delays.
{\it J.~Math.~Anal.~Appl.} { 324} (2006), \lb
1336 -– 1355.

\bibitem{BB3}
Berezansky,~L.~and~Braverman,~E.,
Explicit stability conditions for  linear differential equations with several delays.
{\it J.~Math.~Anal.~Appl.}  332 (2007), \lb
246 -- 264.

\bibitem{MCM2008}
Berezansky,~L.~and~Braverman,~E.,
Linearized oscillation theory for a nonlinear equation with a distributed delay.
{\em Math.~Comput.~Modelling}  48 
(2008), \lb
287 -- 304.

\bibitem{BB5}
Berezansky,~L.~and~Braverman,~E., 
Nonoscillation and exponential stability of delay differential equations with oscillating coefficients.
{\it J.~Dyn.~Control Syst.}  15 (2009), 
63 –- 82.

\bibitem{BB1}
Berezansky,~L.~and~Braverman,~E., 
New stability conditions for linear differential equations with several delays.
{\it Abstr. Appl. Anal.} (2011) Art.~ID 178568, 19 pp.


\bibitem{JMAA2017}
Berezansky,~L.~and~Braverman,~E.,
A note on stability of Mackey--Glass equations with two delays.
{\it J. Math. Anal. Appl.} 450 (2017),
1208 –- 1228.

\bibitem{BBI2013}
Berezansky,~L.,~Braverman,~E.~and Idels,~L., 
Mackey--Glass model of hema\-topoiesis with non-monotone feedback: stability, oscillation and control.
{\em Appl. Math. Comput.} 219 (2013), 
6268 -– 6283. 

\bibitem{BB4}
Berezansky,~L.~and~Braverman,~E.,
 On exponential stability of a linear delay differential equation with an oscillating coefficient.
{\em Appl. Math. Lett.} 22 (2009), 
1833 –- 1837.

\bibitem{GD}
Gusarenko,~S.~A.~and Domoshnitsky,~A.~I.,
Asymptotic and oscillation properties of first-order linear scalar functional-differential
equations. {\it Diff.~Equ.} 25 (1989), 1480 -– 1491.

\bibitem{GH2}
Gy\"{o}ri,~I.~and Hartung,~F., Stability in delay perturbed differential and
difference equations. In: 
{\it  Topics in Functional Differential and Difference Equations} 
(Proceedings Lisbon 1999; eds.: T.~Faria et al.).
{Fields Inst.~Commun.~29}. Providence (RI): Amer.~Math.~Soc.
 2001, pp.~181 -- 194.

\bibitem{GH1}
Gy\"{o}ri,~I.,~Hartung,~F.~and Turi,~J., Preservation of stability in delay
equations under delay perturbations. {\it J.~Math.~Anal.~Appl.}
220 (1998), 290 -- 312.

\bibitem{GL}
Gy\"{o}ri,~I.~and Ladas,~G.,
{\it Oscillation Theory of Delay Differential Equations}.
Oxford: Clarendon Press  1991.

\bibitem{Kz}
Krisztin,~T., On stability properties for one-dimensional
functional-differential equations.
{\it Funkcial. Ekvac.}  34 (1991), 241 -- 256.

\bibitem{Mak1}
Mackey,~M.~and Glass,~L., 
Oscillation and chaos in physiological control systems.
{\it Science} 197 (1977), 287 -- 289.

\bibitem{SY}
Shen,~J.~H.~and Yu,~J.~S.,
 Asymptotic behavior of solutions of neutral differential equations with positive and negative
coefficients. {\it J. Math. Anal. Appl.} 195 (1995), 517 -– 526.

\bibitem{SYC}
 So,~J.~W.\,H.,~Yu,~J.~S.~and Chen,~M.~P., Asymptotic stability for scalar
delay differential equations. {\it Funkcial. Ekvac.} 39 (1996),
1 -- 17.

\bibitem{W}
Wang,~T.,
Inequalities and stability for a linear scalar functional differential equation. {\it J. Math. Anal. Appl.} 298 (2004), 33 -– 44.

\bibitem{WL}
Wang,~X.~and Liao,~L., 
Asymptotic behavior of solutions of neutral differential equations with positive and negative
coefficients. {\it J. Math. Anal. Appl.}  279 (2003),  326 -– 338.

\bibitem{YS}
Yoneyama,~T.~and Sugie,~J.,
 On the stability region of scalar delay-differential equations. 
{\it J. Math. Anal. Appl.} 134 (1988),  408 –- 425.

\bibitem{ZW}
Zhang,~Z.~and Wang,~Z., 
Asymptotic behavior of solutions of neutral differential equations with positive and negative
coefficients. {\it Ann.~Diff.~Equ.}  17 (2001), 295 -– 305.

\bibitem{ZY}
Zhang,~Z.~and Yu,~J., 
Asymptotic behavior of solutions of neutral difference equations with positive and negative coefficients.
{\it Math. Sci. Res. Hot-Line} 2 (1998), 1 -– 12.

\end{thebibliography}
\end{document}